\newtheorem{theorem}{Theorem}[section]
\newtheorem{proposition}{Proposition}[section] 
\newtheorem{corollary}{Corollary}[section] 
\newtheorem{lemma}{Lemma}[section]
\theoremstyle{definition}
\newtheorem{defn}{Definition}[section]
\newtheorem{remark}{Remark}[section]
\newtheorem{notation}{Notation}[section] 
\newtheorem{example}{Example}[section]
\newtheorem*{compl}{Complement to Theorem 2.1}
\newtheorem{conjecture}{Conjecture}[section]
\newcommand{\ds}{\displaystyle}
\newcommand{\lmt}{\longmapsto}
\newcommand{\cal}{\mathcal}
\renewcommand{\phi}{\varphi}
\newcommand{\lra}{\longrightarrow}
\newcommand{\diam}{\operatorname{diam}}
\newcommand{\e}{\varepsilon}
\newcommand{\id}{\mathrm{id}}
\newcommand{\E}{\mathcal E}
\newcommand{\F}{\mathcal F}
\newcommand{\U}{\mathcal U}
\newcommand{\W}{\mathcal W}
\newcommand{\IR}{\mathbb R}
\newcommand{\IN}{\mathbb N}
\newcommand{\IZ}{\mathbb Z}
\begin{document}
\baselineskip 6 mm

\thispagestyle{empty}

%%%%%%%%%%%%%%%%%%%

\title[The groups of uniform homeomorphisms]{Groups of uniform homeomorphisms of covering spaces} 

\author[Tatsuhiko Yagasaki]{Tatsuhiko Yagasaki}
\address{Graduate School of Science and Technology, Kyoto Institute of Technology, Kyoto, 606-8585, Japan}
\email{yagasaki@kit.ac.jp}

\subjclass[2010]{57S05; 58D10, 57N15, 54E40} 
\keywords{Space of uniform embeddings, Group of uniform homeomorphisms, Uniform topology, Euclidean ends}

\maketitle

\begin{abstract} 
In this paper we deduce a local deformation lemma for uniform embeddings in a metric covering space over a compact manifold 
from the deformation lemma for embeddings of a compact subspace in a manifold. 
This implies the local contractibility of the group of uniform homeomorphisms of such a metric covering space under the uniform topology. 
Furthermore, combining with similarity transformations, this enables us to induce  
a global deformation property of groups of uniform homeomorphisms of metric spaces with Euclidean ends.
In particular, we show that the identity component of the group of uniform homeomorphisms of the standard Euclidean $n$-space is contractible. 
\end{abstract} 

\section{Introduction}

In this paper we study some local and global deformation properties of spaces of uniform embeddings and 
groups of uniform homeomorphisms of metric covering spaces over compact manifolds and metric spaces with Euclidean ends. 

Suppose $(X,d)$ and $(Y, \rho)$ are metric spaces. 
A map $h : (X,d) \to (Y, \rho)$ is said to be uniformly continuous if for each $\e > 0$ there is a $\delta > 0$ such that 
if $x,x' \in X$ and $d(x,x') < \delta$ then $\rho(f(x), f(x')) < \e$. 
The map $h$ is called a uniform homeomorphism if $h$ is bijective and both $h$ and $h^{-1}$ are uniformly continuous. 
A uniform embedding is a uniform homeomorphism onto its image. 

In \cite{EK}  R.\,D.~Edwards and R.\,C.~Kirby obtained a fundamental local deformation theorem for embeddings of a compact subspace in a manifold. 
Based upon this theorem, in this article we deduce a local deformation lemma for uniform embeddings in a metric covering space over a compact manifold. 
Here, the Arzela-Ascoli theorem (\cite[Theorem 6.4]{Du}) plays an essential role in order to pass from the compact case to the uniform case. 

Suppose $(M, d)$ is a topological manifold possibly with boundary with a fixed metric $d$ and $X$, $C$ are subspaces of $M$. 
Let $\E^u_\ast(X, M; C)$ denote the space of uniform proper embeddings $f : (X, d|_X) \to (M, d)$ such that $f = \id$ on $X \cap C$. 
This space is endowed with the uniform topology induced from the  sup-metric 
$$d(f,g) = \sup \big\{ d(f(x), g(x)) \mid x \in X \big\} \in [0, \infty] \hspace{8mm} (f, g \in \E^u_\ast(X, M; C)).$$ 

Since the notion of uniform continuity depends on the choice of metric $d$ on the manifold $M$, 
it is necessary to select a reasonable class of metrics. 
In \cite{Ce} (cf, \cite[Section 5.6]{Ru}) A.V.~{\v C}ernavski\u\i \break considered the case where $M$ is the interior of a compact manifold $N$ and the metric $d$ is a restriction of some metric on $N$. 
In this article we consider the case where $M$ is a covering space over a compact manifold $N$ and the metric $d$ is the pull-back of some metric on $N$. 
The natural model is the class of Riemannian coverings in the smooth category. 
In order to remove the extra requirements in the smooth setting, here we introduce the notion of metric covering projection. 
Its definition and basic properties are included in Section 2.2 below.  
The following is our main theorem. 

\begin{theorem}\label{thm_local_deformation} 
Suppose $\pi : (M, d) \to (N, \rho)$ is a metric covering projection, $N$ is a compact topological $n$-manifold possibly with boundary,  
$X$ is a closed subset of $M$, $W' \subset W$ are uniform neighborhoods of $X$ in $(M, d)$ and 
$Z$, $Y$ are closed subsets of $M$ such that $Y$ is a uniform neighborhood of $Z$. 
Then there exists a neighborhood $\W$ of the inclusion map $i_W : W \subset M$ in $\E^u_\ast(W, M; Y)$ and 
a homotopy $\phi : \W \times [0,1] \lra \E^u_\ast(W, M; Z)$ such that 
\begin{itemize} 
\item[(1)] for each $h \in \W$ \\ 
\begin{tabular}[t]{c@{\ \,}l}
{\rm (i)} & $\phi_0(h) = h$, \hspace{3mm} 
{\rm (ii)} $\phi_1(h) = \id$ \ on \ $X$, \\[2mm] 
{\rm (iii)} & $\phi_t(h) = h$ \ on \ $W - W'$ \ \ and \ \ $\phi_t(h)(W) = h(W)$ \ \ $(t \in [0,1])$, \\[2mm] 
{\rm (iv)} & if $h = \id$ on $W \cap \partial M$, then $\phi_t(h) = \id$ on $W \cap \partial M$ $(t \in [0,1])$, 
\end{tabular} 
\vskip 1.5mm 
\item[(2)] $\phi_t(i_W) = i_W$ \ $(t \in [0,1])$.
\end{itemize} 
\end{theorem}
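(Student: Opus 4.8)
The strategy is to localize using the compactness of $N$, to apply the Edwards--Kirby deformation theorem \cite{EK} in finitely many Euclidean (and half-Euclidean) charts, and then to promote the construction to the uniform category by an equicontinuity argument resting on the Arzela--Ascoli theorem \cite[Theorem 6.4]{Du}.

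\emph{Setting up the geometry.} Since $N$ is a compact $n$-manifold possibly with boundary and $\pi$ is a metric covering projection, I would first fix $r_0>0$ such that every $\rho$-ball of radius $r_0$ in $N$ is evenly covered with each sheet mapped $d$-isometrically onto it, and choose finitely many coordinate charts $E_1,\dots,E_k$, each homeomorphic to $\IR^n$ or $\IR^n_+$, together with relatively compact open subsets $E_j'\subset E_j^+$ satisfying $\overline{E_j'}\subset E_j^+$ and $\overline{E_j^+}\subset E_j$, so small that: (a) $\diam_\rho E_j$ is much smaller than $r_0$; (b) the closed cores $\overline{E_j'}$ cover $\pi(X)$, while each $E_j$ lies in $\pi$ of a fixed uniform neighborhood of $X$ that is contained in $W'$; and (c) $\diam_\rho E_j$ is smaller than the uniform width of the buffer between $Z$ and $Y$. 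Pulling back, $\pi^{-1}(E_j)=\bigsqcup_\alpha E_{j,\alpha}$ with $\pi|_{E_{j,\alpha}}\colon E_{j,\alpha}\to E_j$ an isometric homeomorphism and distinct sheets $d$-separated by a uniform amount; by (b) every sheet meeting $X$ lies in $W'$, and by (c) every sheet meeting $Z$ lies in $Y$. Finally I would fix $\delta>0$, uniform in $j$ and $\alpha$, so small that any embedding $g$ of an enlarged chart $E_{j,\alpha}^+$ (the component of $\pi^{-1}(E_j^+)$ in $E_{j,\alpha}$) into $M$ with $d(g,\mathrm{incl})<\delta$ has image contained in a single sheet and projects through $\pi$ to an embedding into the ambient chart within $\delta$ of the inclusion; the neighborhood $\W$ will be a suitable sub-neighborhood of $\{h\in\E^u_\ast(W,M;Y):d(h,i_W)<\delta\}$.

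\emph{The iteration.} The homotopy is produced in $k$ stages, the $j$-th carried out on the parameter subinterval $[(j-1)/k,\,j/k]$, so that the concatenation is a homotopy on $[0,1]$. At the $j$-th stage, with $g$ the current embedding, I would treat each sheet $E_{j,\alpha}$ with $E_{j,\alpha}^+\subset W$ separately: transport $g|_{E_{j,\alpha}^+}$ through $\pi$ and the isometric chart identification to an embedding $\widehat g_\alpha$ of $E_j^+$ into $E_j$ within $\delta$ of the inclusion, apply the canonical Edwards--Kirby deformation operator $\Psi_j$ for the data $\overline{E_j'}\subset E_j^+\subset E_j$ --- the manifold-with-boundary version in the half-Euclidean charts --- to obtain a path $t\mapsto\Psi_j^t(\widehat g_\alpha)$ that starts at $\widehat g_\alpha$, equals the inclusion on $\overline{E_j'}$ at $t=1$, equals $\widehat g_\alpha$ off a compact subset of $E_j^+$, has image $\widehat g_\alpha(E_j^+)$ for every $t$, fixes $\partial\IR^n_+$ pointwise when $\widehat g_\alpha$ does, and is constant if $\widehat g_\alpha$ is already the inclusion; transport the path back to $E_{j,\alpha}$ (possible since the image is unchanged, so it stays inside one sheet) and glue it in, using that it equals $g$ near $\partial E_{j,\alpha}^+$. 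Performing this simultaneously over all such $\alpha$ --- legitimate since the sheets are uniformly separated --- and leaving $g$ unchanged elsewhere defines the $j$-th stage. After $k$ stages the result is $\id$ on $X$, because $\bigcup_j\overline{E_j'}$ covers $\pi(X)$ and every sheet meeting $X$ lies in $W'$; the ``constant on the inclusion'' clause gives $\phi_t(i_W)=i_W$, the support clauses give $\phi_t(h)=h$ on $W-W'$, the image clauses give $\phi_t(h)(W)=h(W)$, the boundary clause gives (1)(iv), and since every sheet meeting $Z$ lies in $Y$ --- where $h$, hence every stage, is already the identity --- the deformation is constant on $W\cap Z$, so $\phi$ takes values in $\E^u_\ast(W,M;Z)$.

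\emph{The main obstacle.} What really has to be proved is that $\phi\colon\W\times[0,1]\to\E^u_\ast(W,M;Z)$ is continuous \emph{for the uniform topology} and that each $\phi_t(h)$ is genuinely a uniform homeomorphism; the operators $\Psi_j$ are a priori only continuous for the compact-open topology on the charts, whereas the relevant sup-metric control involves all of the infinitely many sheets at once. This is exactly where the Arzela--Ascoli theorem enters: for a fixed $h\in\W$, the pushed-down restrictions $\{\widehat h_\alpha:\alpha\}$ form a family of embeddings of $E_j^+$ into $E_j$ that is uniformly bounded --- all within $\delta$ of the inclusion --- and equicontinuous, because $h$ is uniformly continuous and the sheet maps are isometries; hence this family is precompact in the compact-open topology, so $\Psi_j$ is \emph{uniformly} continuous on a neighborhood of its closure, which provides the sheet-independent modulus of continuity needed to conclude that $g\mapsto g^{(j)}$, and therefore $\phi$, is continuous for the uniform topology. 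The same precompactness, applied to the inverses, shows that each $\phi_t(h)$ is a uniform homeomorphism and that $\W$ may be taken to be a uniform neighborhood of $i_W$. Thus the essential point is the interplay: compactness of $N$ supplies the uniform geometric constants, uniform continuity of $h$ supplies equicontinuity across the sheets, and Arzela--Ascoli converts that equicontinuity into the uniform estimates that the compact-case theorem of \cite{EK} does not itself see.
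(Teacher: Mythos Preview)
Your overall strategy---localize via the compactness of $N$, apply Edwards--Kirby in finitely many evenly covered charts, iterate over the charts, and use Arzela--Ascoli to upgrade compact-open continuity to uniform continuity sheet-independently---is the same as the paper's, and your handling of the Arzela--Ascoli step is essentially right. But there is a real gap in the iteration itself.

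You apply the \emph{non-relative} Edwards--Kirby operator $\Psi_j$ (for the bare data $\overline{E_j'}\subset E_j^+\subset E_j$) at each stage. Stage $j$ achieves $g^{(j)}=\id$ on the cores $\overline{E_{j,\alpha}'}$, but stage $j{+}1$ can destroy this: a sheet $E_{j+1,\beta}^+$ may overlap $\overline{E_{j,\alpha}'}$ while $g^{(j)}$ is not the inclusion on all of $E_{j+1,\beta}^+$, and then $\Psi_{j+1}$ moves points in that overlap. Hence after $k$ stages you only know $g^{(k)}=\id$ on the cores of the \emph{last} chart, not on $X$. The same mechanism invalidates your assertion that ``every stage is already the identity'' on $Y$: a sheet straddling $\partial Y$ does get moved, and this perturbation propagates into later sheets that lie entirely inside $Y$, so $\id$ on $Z$ is not preserved either. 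The paper repairs both problems simultaneously by using the \emph{relative} Edwards--Kirby theorem at each stage, relative to a carefully bookkept set $Y_i$ that records both a slightly shrunk neighborhood of $Z$ and the cores already straightened in stages $1,\dots,i{-}1$; the nested constants $\delta_0>\delta_1>\cdots>\delta_m$ manage the erosion. This forces a further subtlety you have not seen: different sheets of the same $\pi^{-1}(U_i)$ intersect $Y_i$ in genuinely different patterns, so one cannot push down to a single relative problem in $U_i$; the paper partitions the sheets by their (finitely many) intersection patterns and runs a separate relative operator on each class (Lemma~3.2). Your argument needs both ingredients---the relative operator and the partition-by-pattern step---to close.
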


This theorem induces some consequences on the theory of uniform homeomorphisms. 
Suppose $(X,d)$ is a metric space and $A$ is a subset of $X$. 
Let ${\cal H}^u_A(X,d)$ denote the group of uniform homeomorphisms of $(X, d)$ onto itself which fix $A$ pointwise, 
endowed with the uniform topology. 
Let ${\cal H}^u_A(X, d)_0$ denote the connected component of the identity map $\id_X$ of $X$ in ${\cal H}_A^u(X, d)$. 
We are also concerned with the subgroup 
$${\cal H}^u_A(X, d)_b = \{ h \in {\cal H}_A^u(X, d) \mid d(h, \id_X) < \infty \}.$$ 
It is easily seen that ${\cal H}^u_A(X, d)_0 \subset {\cal H}^u_A(X, d)_b$ since ${\cal H}^u_A(X, d)_b$ is both closed and open in ${\cal H}^u_A(X, d)$. 
When $X - A$ is relatively compact in $X$, the group  
${\cal H}^u_A(X,d)$ coincides with the whole group of homeomorphisms of $X$ onto itself which fix $A$ pointwise endowed with the compact-open topology. 
In this case we delete the script ``$u$'' from the notation. As usual, the symbol $A$ is suppressed when it is an empty set. 

In \cite{Ce} it is shown that ${\cal H}^u(M, d)$ is locally contractible in the case where  
$M$ is the interior of a compact manifold $N$ and the metric $d$ is a restriction of some metric on $N$.  
The next corollary is a direct consequence of Theorem~\ref{thm_local_deformation}. 

\begin{corollary}\label{cor_local_contractibility} 
Suppose $\pi : (M, d) \to (N, \rho)$ is a metric covering projection onto a compact topological $n$-manifold $N$ possibly with boundary.
Then ${\cal H}^u(M, d)$ is locally contractible.  
\end{corollary}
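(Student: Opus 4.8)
The plan is to apply Theorem~\ref{thm_local_deformation} with the trivial choices that reduce ambient subspaces to the whole of $M$. Concretely, set $X = Z = M$ and $W' = W = M$, and take $Y$ to be any uniform neighborhood of $Z = M$ in $(M,d)$ — for instance $Y = M$ itself, which is trivially a uniform neighborhood of itself. With these choices $X \cap C$-type conditions become vacuous ($Y \cap (M - W') = \varnothing$ since $W' = M$), $\E^u_\ast(W, M; Y)$ is just $\E^u_\ast(M, M; M)$, and the proper uniform embeddings $f : (M,d) \to (M,d)$ that restrict to the identity on $M \cap Y = M$ are all of $\{\id_M\}$ — so this naive specialization collapses. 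The fix is the standard one: I would instead leave $W' = W = M$ and $X = M$ but take $Z = \varnothing$ and $Y$ a uniform neighborhood of $Z$, which an empty set always admits (e.g. $Y = \varnothing$), so that $\E^u_\ast(W, M; Y) = \E^u_\ast(M, M; \varnothing)$, i.e. the space of all proper uniform self-embeddings of $(M,d)$.

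First I would observe that, since $\pi$ is a metric covering projection onto a compact manifold, a uniform self-homeomorphism of $(M,d)$ is in particular a proper map (properness is inherited through the covering structure over the compact base, and uniform homeomorphisms preserve the relevant metric-bounded structure), so ${\cal H}^u(M,d)$ embeds as a subset of $\E^u_\ast(M, M; \varnothing)$, and the subspace topology agrees with the uniform topology on ${\cal H}^u(M,d)$. Then Theorem~\ref{thm_local_deformation}, applied with $X = M$, $W' = W = M$, $Z = Y = \varnothing$, furnishes a neighborhood $\W$ of $\id_M = i_W$ in $\E^u_\ast(M, M; \varnothing)$ together with a homotopy $\phi : \W \times [0,1] \to \E^u_\ast(M, M; \varnothing)$ with $\phi_0(h) = h$, $\phi_1(h) = \id_M$ on $X = M$ (hence $\phi_1(h) = \id_M$), $\phi_t(h)(M) = h(M)$, and $\phi_t(i_W) = i_W$ for all $t$.

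Next I would intersect: put $\W_0 = \W \cap {\cal H}^u(M,d)$, an open neighborhood of $\id_M$ in ${\cal H}^u(M,d)$. The key point to verify is that $\phi$ restricts to a homotopy $\W_0 \times [0,1] \to {\cal H}^u(M,d)$, i.e. that each $\phi_t(h)$ is a \emph{surjective} uniform self-homeomorphism whenever $h$ is. Surjectivity follows from condition (1)(iii): $\phi_t(h)(M) = h(M) = M$. That $\phi_t(h)$ is a uniform embedding is exactly the assertion $\phi_t(h) \in \E^u_\ast(M,M;\varnothing)$, and a surjective uniform embedding is a uniform homeomorphism. Continuity of $\phi$ as a map into ${\cal H}^u(M,d)$ (with its uniform topology) is inherited from continuity of $\phi$ into $\E^u_\ast(M,M;\varnothing)$ since the topologies agree. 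Then $\phi : \W_0 \times [0,1] \to {\cal H}^u(M,d)$ satisfies $\phi_0 = \mathrm{incl}$, $\phi_1 \equiv \id_M$, and $\phi_t(\id_M) = \id_M$ for all $t$, which is precisely a contraction of the neighborhood $\W_0$ to $\id_M$ within ${\cal H}^u(M,d)$; this gives local contractibility at $\id_M$, and hence — by translating with the group structure, since left translation by any $g \in {\cal H}^u(M,d)$ is a homeomorphism of the topological group onto itself — at every point.

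The main obstacle I anticipate is purely bookkeeping rather than substantive: choosing the degenerate data $(X,W',W,Z,Y)$ so that $\E^u_\ast(W,M;Y)$ is genuinely the full self-embedding space (the empty-set conventions for "uniform neighborhood" must be checked to be admissible in the paper's Section~2 definitions), and confirming that a uniform self-homeomorphism of $(M,d)$ is automatically proper so that it lies in $\E^u_\ast$ — this last point is where the hypothesis that $\pi$ covers a \emph{compact} $N$ is used, and it is the only place the proof is not a formal consequence of Theorem~\ref{thm_local_deformation}.
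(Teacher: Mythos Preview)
Your approach is correct and is precisely the intended direct application of Theorem~\ref{thm_local_deformation} with $X = W' = W = M$ and $Y = Z = \varnothing$ (the paper gives no proof beyond calling the corollary a direct consequence). One correction: in this paper ``proper'' for an embedding $f : X \to M$ means $f^{-1}(\partial M) = X \cap \partial M$ (Section~2.4), which is automatic for any self-homeomorphism of a manifold by invariance of domain, so the compact-base hypothesis is not needed for that step and your flagged ``main obstacle'' is vacuous.
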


Next we study a global deformation property of the group ${\cal H}^u(X,d)$. 
The most standard example is the $n$-dimensional Euclidean space $\IR^n$ with the standard Euclidean metric. 
The relevant feature in this scenario is the existence of similarity transformations. 
This enables us to deduce a global deformation of uniform embeddings from a local one. 

To be more general, 
we treat metric spaces with bi-Lipschitz Euclidean ends. 
Recall that a map $h : (X,d) \to (Y, \rho)$ between metric spaces is said to be Lipschitz if there exists a constant $C > 0$ 
such that $\rho(h(x), h(x')) \leq Cd(x, x')$ for any $x, x' \in X$. 
The map $h$ is called a bi-Lipschitz homeomorphism if $h$ is bijective and both $h$ and $h^{-1}$ are Lipschitz maps. 
The model of Euclidean end is the complement $\IR^n_r = \IR^n - O(r)$ of the round open $r$-ball $O(r)$ centered at the origin. 
These complements $\IR^n_r$ $(r > 0)$ are bi-Lipschitz homeomorphic to each other under  similarity transformations. 
A bi-Lipschitz $n$-dimensional Euclidean end of a metric space $(X, d)$ means a closed subset $L$ of $X$ 
which admits a bi-Lipschitz homeomorphism of pairs, 
$\theta : (\IR^n_1, \partial \IR^n_1) \approx ((L, {\rm Fr}_X L), d|_L)$ and $d(X - L, L_r) \to \infty$ as $r \to \infty$, where 
${\rm Fr}_X L$ is the topological frontier of $L$ in $X$ and 
$L_r = \theta(\IR^n_r)$ for $r \geq 0$. 
We set $L' = \theta(\IR^n_2)$ and $L'' = \theta(\IR^n_3)$. 
Using similarity transformations, we can deduce the following result from the local deformation theorem, Theorem~\ref{thm_local_deformation}. 

\begin{theorem}\label{thm_Euclid-end}
Suppose $X$ is a metric space and $L_1, \cdots, L_m$ are mutually disjoint bi-Lipschitz Euclidean ends of $X$. 
Let $L' = L_1' \cup \cdots \cup L_m'$ and $L'' = L_1'' \cup \cdots \cup L_m''$. 
Then there exists a strong deformation retraction $\phi$ of ${\cal H}^u(X)_b$ onto ${\cal H}^u_{L''}(X)$ such that 
$$\mbox{$\phi_t(h) = h$ \ on \ $h^{-1}(X - L') - L'$ \ \ for any \ $(h,t ) \in {\cal H}^u(X)_b \times [0,1]$.}$$  
\end{theorem}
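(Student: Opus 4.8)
The plan is to assemble $\phi$ from two ingredients: the local deformation lemma, Theorem~\ref{thm_local_deformation}, applied — via similarity transformations — not to the ends themselves but to infinitely many rescaled copies of one fixed compact model annulus; and a final bounded-distortion radial dilation that mops up the remaining compact part. What makes this go through is exactly the restriction to ${\cal H}^u(X)_b$: if $d(h,\id_X)=C<\infty$, then on a bi-Lipschitz Euclidean end $h$ displaces points only a bounded Euclidean amount $b$ (up to the bi-Lipschitz constant of the chart $\theta$), so shrinking a far-out annular slice of the end down to a fixed size by a similarity makes the restriction of $h$ $C^0$-close to the inclusion, hence a member of the neighbourhood $\W$ produced by Theorem~\ref{thm_local_deformation}. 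Since the $L_i$ are mutually disjoint closed sets the constructions over distinct ends are independent and may be superimposed, so we reduce to $m=1$, drop subscripts, fix $\theta\colon(\IR^n_1,\partial\IR^n_1)\approx(L,\mathrm{Fr}_X L)$, pass to Euclidean coordinates on $L$, and write $L_r=\theta(\IR^n_r)$ and $s_\lambda(y)=\lambda y$ for the similarities.

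\emph{Step 1: straightening $h$ near infinity by a rescaled swindle.} For $h\in{\cal H}^u(X)_b$ we would choose a large scale $R$ (growing with $C$, arranged to depend continuously on $h$) and cut $L_R$ into the dyadic annular pieces $E_k=\theta(\{R2^k\le|y|\le R2^{k+1}\})$, $k\ge0$, each carried onto a fixed compact model annulus $S\subset\IR^n$ by the similarity $\sigma_k=\theta\circ s_{R2^k}$. We then process the $E_k$ in turn: inductively $h$ has already been made the identity on $E_0\cup\dots\cup E_{k-1}$, so the conjugate $\sigma_k^{-1}h\sigma_k$ is the identity near an inner collar of $S$ and within sup-distance $O(b/R)$ of the inclusion, hence lies in the neighbourhood $\W$ of Theorem~\ref{thm_local_deformation}, applied with $\pi=\id$ on a fixed compact annular $n$-manifold, with ``$X$''$=S$, and with its auxiliary sets ``$Y$'', ``$Z$'' taken to be (a fattening, resp.\ a shrinking of) that inner collar; transporting the resulting homotopy back by $\sigma_k$ deforms $h$, supported in a slight thickening of $E_k$, until it is the identity on $E_k$, without touching the previously fixed pieces. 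Carrying out these deformations on consecutive time-subintervals accumulating at $t=1$, and arranging that the $k$-th adjustment has size $O(b/(R2^k))$, the limit at $t=1$ is a homeomorphism $h_1\in{\cal H}^u(X)_b$ equal to the identity on all of $L_R$ and equal to $h$ off $L''$.

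\emph{Step 2: the compact mouth, and assembly.} It remains to push the remaining (now bounded) non-triviality of $h_1$ off $L''$. We would conjugate $h_1$ by a radial isotopy $\gamma_t$ of $X$ that is the identity on $X-L'$ and at $t=1$ dilates the end radially so that $\gamma_1$ takes $L''$ onto $L_R$ (stretching the thin collar $\theta(\{2\le|y|\le3\})$ onto $\theta(\{2\le|y|\le R\})$); since $R$ is by now a fixed number, $\gamma_t$ can be taken with distortion bounded in terms of $R$ alone, so $\gamma_t^{-1}h_1\gamma_t\in{\cal H}^u(X)_b$ throughout, and because $h_1=\id$ on $L_R=\gamma_1(L'')$ the endpoint $\gamma_1^{-1}h_1\gamma_1$ is the identity on $L''$ while still agreeing with $h$ on $X-L'$; if $h_1$ was already the identity on $L''$ one takes $\gamma_t\equiv\id$. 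Concatenating the two homotopies, suitably reparametrised in $t$, defines $\phi$; then $\phi_0=\id$ and $\phi_1({\cal H}^u(X)_b)\subseteq{\cal H}^u_{L''}(X)$, both steps are supported inside $L'\cup h^{-1}(L')$ — whose complement in $X$ is precisely $h^{-1}(X-L')-L'$ — which yields the displayed invariance, and both steps are the constant homotopy whenever $h$ already fixes $L''$, so $\phi$ is a strong deformation retraction.

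The step I expect to be the main obstacle is the globalization in Step~1. Theorem~\ref{thm_local_deformation} straightens only a \emph{single compact} annulus embedded $C^0$-close to the inclusion, whereas $h$ lives on the whole non-compact end and merely sends each annular slice \emph{into a neighbourhood} of itself rather than onto itself; the telescoping induction converts the second situation into the first, but one must then verify that the infinite process converges, at $t=1$, to a \emph{genuine} uniform homeomorphism (both it and its inverse uniformly continuous), which requires controlling the size of each local adjustment so that the radii $R2^k$ can be chosen to make the adjustments summable and $h_1$ of bounded displacement. The subtlest point is continuity and strongness together: every choice ($R$, the inductive deformations, the isotopy $\gamma_t$) must be made to depend continuously on $h\in{\cal H}^u(X)_b$ and to collapse to the trivial one on ${\cal H}^u_{L''}(X)$, so that $\phi$ is a continuous strong deformation retraction. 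By contrast, the ingredients built purely from similarity transformations — the rescalings in Step~1 and the dilation $\gamma_t$ in Step~2 — are routine once the bounded-distortion bookkeeping is in place.
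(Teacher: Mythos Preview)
Your Step~1 contains a genuine error, not merely an acknowledged subtlety. You claim the $k$-th adjustment has size $O(b/(R2^k))$, but that is its size in the \emph{model} annulus $S$ after conjugation by $\sigma_k$; when you transport the deformation back to $X$ via $\sigma_k=\theta\circ s_{R2^k}$, distances scale by $R2^k$ (up to the bi-Lipschitz constant $\kappa$), so the $k$-th adjustment has size $O(b)$ in the sup-metric on $X$, independent of $k$. Concretely, if $h$ is the transport of a Euclidean translation $y\mapsto y+a$, then after your first $k$ steps the partial result $h_{t_k}$ equals $\id$ on $E_0\cup\cdots\cup E_k$ but still equals $h$ on $E_{k+1}\cup E_{k+2}\cup\cdots$, whence $d(h_{t_k},h_1)\approx\kappa|a|$ for every $k$. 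The path $t\mapsto h_t$ is therefore \emph{not} continuous at $t=1$ in the uniform topology, and the sequential dyadic swindle does not yield a continuous homotopy in ${\cal H}^u(X)_b$.

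The paper avoids this by invoking Theorem~\ref{thm_local_deformation} in its full non-compact strength: it is applied with $M=\IR^n$ regarded as a metric covering space of the compact torus $\IR^n/\IZ^n$, and with $(X,W',W)=(\IR^n_v,\IR^n_u,\IR^n_s)$, giving a \emph{single} deformation that straightens $h$ on all of $\IR^n_v$ at once (Proposition~\ref{prop_deform_Euclid}, Lemmas~\ref{lemma_local_deform_E-end}--\ref{lemma_deform_homeo_1}). The Arzel\`a--Ascoli machinery in Section~3 is precisely what collapses the infinitely many sheets to finitely many compact model problems, so that one application suffices; your scheme forfeits this by restricting to the compact case $\pi=\id$. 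The paper does still run an infinite iteration (Lemma~\ref{lemma_deform_homeo_3}), but over displacement thresholds $\lambda_1<\lambda_2<\cdots$ rather than over annuli: cutoff functions $\alpha_i$ switch off the $i$-th stage outside ${\cal H}^u(X;\lambda_i+1)$, so that for any fixed $h$ the composite \emph{stabilizes} after finitely many steps --- exactly the property your annular iteration lacks, and exactly what makes the limit continuous both in $t$ and in $h$. Your Step~2 is essentially correct and parallels the paper's use of a compactly supported isotopy (Lemma~\ref{lemma_deform_homeo_2}) together with the collar argument (Lemma~\ref{lemma_collar}); but Step~1 must be replaced by the single-shot rescaled deformation.
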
 

\begin{example}\label{example}
(1) ${\cal H}^u(\IR^n)_b$ is contractible for every $n \geq 0$. 
In fact, $\IR^n$ has the model Euclidean end $\IR^n_1$ and hence 
there exists a strong deformation retraction of ${\cal H}^u(\IR^n)_b$ onto 
${\cal H}^u_{\IR^n_3}(\IR^n)$. 
The latter is contractible by Alexander's trick. 

(2) The $n$-dimensional cylinder $M = {\Bbb S}^{n-1} \times \IR$ is the product of the $(n-1)$-sphere ${\Bbb S}^{n-1}$  and the real line $\IR$. If $M$ is asigned a metric so that ${\Bbb S}^{n-1} \times (-\infty, -1]$ and 
${\Bbb S}^{n-1} \times [1, \infty)$ are two bi-Lipschitz Euclidean ends of $M$, then 
${\cal H}^u(M)_b$ includes the subgroup 
${\cal H}_{{\Bbb S}^{n-1} \times \IR_1}(M) \approx {\cal H}_\partial({\Bbb S}^{n-1} \times [-1,1])$ as a strong deformation retract. 
In particular, ${\cal H}^u(M)_0$ admits a strong deformation retraction onto ${\cal H}_{{\Bbb S}^{n-1} \times \IR_1}(M)_0 \approx {\cal H}_\partial({\Bbb S}^{n-1} \times [-1,1])_0$. 

(3) In dimension 2, we have a more explicit conclusion. Suppose $N$ is a compact connected 2-manifold with a nonempty boundary and $C = \cup_{i=1}^m C_i$ is a nonempty union of some boundary circles of $N$. 
If the noncompact 2-manifold $M = N - C$ is assigned a metic $d$ such that for each $i = 1, \cdots, m$ the end $L_i$ of $M$ corresponding to the boundary circle $C_i$ is a bi-Lipschitz Euclidean end of $(M, d)$, then ${\cal H}^u(M, d)_0 \simeq {\cal H}^u_{L''}(M)_0 \approx {\cal H}_{C}(N)_0 \simeq \ast$. 
\end{example}

\begin{remark} In Example~\ref{example} (1), 
one might expect that conjugation by a suitable shrinking homeomorphism $\IR^n \approx O(1)$ and 
extension by the identity on the boundary would directly reduce the problem to the case of 
${\cal H}_\partial(B(1))$, the group of homeomorphisms of the closed unit ball relative to the boundary, 
since this group is contractible by Alexander's trick. 
However, the contraction of ${\cal H}^u(\IR^n)_b$ obtained in this way is not continuous. 
In fact, 
%Otherwise, 
it would mean that any $h \in {\cal H}^u(\IR^n)_b$ could be approximated by compactly supported homeomorphisms in the sup-metric. 
But this does not hold, for example, for any translation $h(x) = x +a$ $(a \neq 0)$. 
\end{remark} 

In \cite{MSYY} we studied the topological type of ${\cal H}^u(\IR)_b$ as an infinite-dimensional manifold 
and showed that it is homeomorphic to $\ell_\infty$. 
Example 1.1 leads to the following conjecture.  

\begin{conjecture} ${\cal H}^u(\IR^n)_b$ is homeomorphic to $\ell_\infty$ for any $n \geq 1$.  
\end{conjecture}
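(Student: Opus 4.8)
Since this is a conjecture, what follows is a proposed strategy rather than a proof. The plan is to show that ${\cal H}^u(\IR^n)_b$, with its uniform topology, is a manifold modelled on $\ell_\infty$, and then to invoke the classification of such manifolds. Recall that $\ell_\infty$ is a non-separable Banach space whose density character is $\mathfrak{c} = 2^{\aleph_0}$, and that by the topological classification of infinite-dimensional Banach spaces (each is homeomorphic to a Hilbert space of the same density character) one has $\ell_\infty \approx \ell_2(\mathfrak{c})$; hence an $\ell_\infty$-manifold is exactly an $\ell_2(\mathfrak{c})$-manifold, to which Toru\'nczyk's characterisation and classification theory applies. In that theory a completely metrizable ANR of weight $\mathfrak{c}$ satisfying the discrete approximation property is an $\ell_2(\mathfrak{c})$-manifold, and such manifolds are classified up to homeomorphism by their homotopy type; in particular a contractible one is homeomorphic to $\ell_2(\mathfrak{c}) \approx \ell_\infty$. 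Contractibility is already in hand: by Theorem~\ref{thm_Euclid-end} (see Example~\ref{example}(1)) the group ${\cal H}^u(\IR^n)_b$ deformation retracts onto ${\cal H}^u_{\IR^n_3}(\IR^n)$, which is contractible by Alexander's trick. So the whole problem is reduced to verifying the three structural hypotheses.

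Complete metrizability I would dispose of first. The sup-metric $d$ on ${\cal H}^u(\IR^n)_b$ need not be complete, but $\widehat d(f,g) = d(f,g) + d(f^{-1},g^{-1})$ is a compatible metric, and ${\cal H}^u(\IR^n)_b$ sits as a $G_\delta$-subset of a $\widehat d$-complete space of pairs $(f,g)$ with $g = f^{-1}$, because uniform continuity of $f$ and of $f^{-1}$ with prescribed moduli, finiteness of the displacement, and bijectivity can all be expressed as a countable intersection of open conditions; this is routine and unaffected by non-separability. That the weight equals $\mathfrak{c}$ follows from a $\mathfrak{c}$-sized $d$-discrete family of bounded homeomorphisms (say, indexed by subsets of $\IN$ by means of disjoint unit bumps marching off to infinity) for the lower bound, and from a piecewise-linear approximation argument in the uniform topology for the upper bound.

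The substance of the problem is the ANR property together with the discrete approximation property, and here the machinery of the present paper should enter. Corollary~\ref{cor_local_contractibility}, applied to the metric covering $\IR^n \to T^n$ given by flat metrics and combined with the fact that ${\cal H}^u(\IR^n)_b$ is open in ${\cal H}^u(\IR^n)$, yields local contractibility of ${\cal H}^u(\IR^n)_b$; but ANR-ness is strictly stronger. The missing ingredient is that the local contractions can be chosen to depend continuously on the homeomorphism --- which is precisely the parametrised form of the deformation furnished by Theorem~\ref{thm_local_deformation}, in which $\phi$ is defined and continuous on an entire neighbourhood $\W$ of the inclusion. I would localise Theorem~\ref{thm_local_deformation} (with $M = \IR^n$, $N = T^n$, and $W' \subset W$, $Z \subset Y$ coming from a shrinking of the complement of a large round ball), paste such parametrised local deformations together, and combine them with the similarity-transformation argument underlying Theorem~\ref{thm_Euclid-end}, so as to produce around each $h \in {\cal H}^u(\IR^n)_b$ a chart onto an open subset of $\ell_\infty$ --- or at least to verify a local equiconnectedness criterion which, for a completely metrizable space, yields the ANR property. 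The discrete approximation property should then flow from the abundance of room at infinity peculiar to the uniform topology: given discretely many maps of cells into ${\cal H}^u(\IR^n)_b$, one spreads their images apart by post-composition with bounded homeomorphisms supported on pairwise disjoint balls far out in $\IR^n$, a move that is $\widehat d$-small yet separating; making this compatible with the chart structure is the delicate step.

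The principal obstacle, to state it plainly, is the ANR property. Local contractibility in the {\v C}ernavski\u\i--Edwards--Kirby style has not, to date, been upgraded to ANR-ness for homeomorphism groups of manifolds of dimension $\geq 5$: even under the compact-open topology it is a long-standing open problem whether ${\cal H}_\partial(B^n)$ is an ANR (equivalently, since it is separable and locally contractible, an $\ell_2$-manifold). Any argument identifying ${\cal H}^u(\IR^n)_b$ with ${\cal H}_\partial(B^n)$ up to homeomorphism is therefore blocked for large $n$, and one must work with the uniform topology on its own terms, banking on the hope that the global structure it imposes --- the proliferation of small-support bounded homeomorphisms scattered towards infinity and organised by the similarity transformations --- supplies exactly the discrete-approximation and negligibility phenomena that the compact-open topology lacks. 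For $n = 1$ this programme was carried through in \cite{MSYY}, where ${\cal H}^u(\IR)_b$ was shown directly to be homeomorphic to $\ell_\infty$; reproducing that analysis in higher dimensions, with Theorem~\ref{thm_local_deformation} taking over the role played there by elementary one-dimensional estimates, is the concrete content of the conjecture.
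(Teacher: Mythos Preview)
The paper states this as a conjecture and offers no proof; there is nothing to compare your proposal against. You correctly recognise this and present a strategy rather than a proof, which is the only honest thing to do here.

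Your outline is the standard and appropriate one: reduce to Toru\'nczyk's characterisation of $\ell_2(\mathfrak{c})$-manifolds via complete metrizability, weight $\mathfrak{c}$, the ANR property, and the discrete approximation property, then invoke contractibility (Example~\ref{example}(1)) to pin down the homeomorphism type. You are also right that the ANR property is the crux and that it is not a consequence of local contractibility alone; your remark that this is open even for ${\cal H}_\partial(B^n)$ under the compact-open topology in high dimensions correctly locates the depth of the problem. The suggestion that the uniform topology might supply extra room (via bounded homeomorphisms supported far out) is exactly the heuristic behind the $n=1$ result in \cite{MSYY}, and is the natural hope. In short, your proposal is a fair summary of why the statement is a conjecture rather than a theorem.
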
 

This paper is organized as follows. Section 2 includes some preliminary results on 
metric covering projections and spaces of uniform embeddings. 
Section 3 is devoted to the proof of Theorem~\ref{thm_local_deformation} and 
the final section, Section 4, includes the proof of Theorem~\ref{thm_Euclid-end}. 

\section{Preliminaries}
\subsection{Conventions} \mbox{} 

%The symbols $\IN$ and $\IZ_{\geq 0}$ denote the sets of positive integers and nonnegative integers respectively. 
%For $m \in \IZ_{\geq 0}$ we use the notations $[m] = \{0, 1, \cdots, m \}$ and $[m]_+ = \{1, \cdots, m \}$.

Maps between topological spaces are assumed to be continuous. 
The word ``function'' means a correspondence not assumed to be continuous. 
For a topological space $X$ and a subset $A$ of $X$, 
the symbols ${\rm Int}_X A$,  $cl_X A$ and ${\rm Fr}_X A$ denote the topological 
interior, closure and frontier of $A$ in $X$. 
The identity map on $X$ is denoted by $\id_X$, while the inclusion map $A \subset X$ is denoted by $i_A$, $\iota_A$ or $\id_A$, etc. 
When ${\cal F}$ is a collection of subsets of $X$, 
the union of ${\cal F}$ is denoted by $|\F|$ or $\bigcup \F$.
For $A \subset X$ 
the star of $A$ with respect to $\cal{F}$ is defined by ${\rm St}(A, \cal{F}) = A \cup \big( \cup \{ F \in {\cal F} \mid F \cap A \neq  \emptyset \}\big) \subset X$. 
%Let ${\rm st}\,\F = \{ {\rm st}\,(F, \F) \mid F \in \F \}$. 

For an $n$-manifold $M$, 
the symbols $\partial M$ and ${\rm Int}\,M$ denote the boundary and interior of $M$ as a manifold. 

\subsection{Metric covering projections} \mbox{} 

Suppose $(X, d)$ is a metric space. 
(Below, when the metric $d$ is implicitly understood, we eliminate the symbol $d$ from the notations.)  
The distance between two subsets $A, B$ of $X$ is defined by  
$d(A, B) = \inf \{ d(x, y) \mid x \in A, y \in B \}$. 
For $\delta \geq 0$ 
the closed $\delta$-neighborhood of $A$ in $X$ is defined by 
$C_\delta(A) = \{ x \in X \mid d(x, A) \leq \delta \}$. 
%Note that $C_0(A) = cl_X A$. 
%For a collection $\F$ of subsets of $X$, 
%we set $\F_\delta = \{ C_\delta(F) \mid F \in \F \}$. 

A neighborhood $U$ of $A$ in $X$ is called a uniform neighborhood of $A$ in $(X, d)$ 
if $C_\delta(A) \subset U$ for some $\delta >0$. 
For $\e > 0$ a subset $A$ of $X$ is said to be $\e$-discrete if 
$d(x,y) \geq \e$ for any distinct points $x, y \in A$. 
More generally, a collection ${\cal F}$ of subsets of $X$ 
is said to be $\e$-discrete if $d(F, F') \geq \e$ for any $F, F' \in \F$ with $F \neq F'$. 
We say that $A$ or $\F$ is uniformly discrete if it is $\e$-discrete for some $\e > 0$. 

For the basics on covering spaces, one can refer to \cite[Chapter 2, Section 1]{Sp}.
If $p : M \to N$ is a covering projection and $N$ is a topological $n$-manifold possibly with boundary, then so is $M$ and $\partial M = \pi^{-1}(\partial N)$. 

\begin{defn} A covering projection $\pi : (X, d) \to (Y, \rho)$ between metric spaces is called a metric covering projection 
if it satisfies the following conditions: 
\begin{itemize} 
\item[$(\natural)_1$] There exists an open cover ${\cal U}$ of $Y$ such that for each $U \in {\cal U}$ the inverse $\pi^{-1}(U)$ is the disjoint union of open subsets of $X$ each of which is mapped isometrically onto $U$ by $\pi$. 
\item[$(\natural)_2$] For each $y \in Y$ the fiber $\pi^{-1}(y)$ is uniformly discrete in $X$. 
\item[$(\natural)_3$] $\rho(\pi(x), \pi(x')) \leq d(x, x')$ for any $x, x' \in X$. 
\end{itemize}
\end{defn}

When an open subset $U$ of $Y$ satisfies the condition in $(\natural)_1$, 
we say that $U$ is isometrically evenly covered by $\pi$. 
In this case, if $U$ is connected, then each connected component of $\pi^{-1}(U)$ is mapped isometrically onto $U$ by $\pi$. 

Riemannian covering projections are typical examples of metric covering projections. 

\begin{lemma}\label{lemma_covering_proj}
Suppose $\pi : (X, d) \to (Y, \rho)$ is a metric covering projection and $Y$ is compact. 
\begin{itemize}
\item[(1)] There exists $\e > 0$ such that each fiber of $\pi$ is $\e$-discrete.
\item[(2)] Suppose $U$ is an open subset of $Y$ and $V$ is an open subset of $\pi^{-1}(U)$ which is mapped isometrically onto $U$ by $\pi$, 
$E$ is a subset of $V$ and $F = \pi(E) \subset U$. 
Then $d(X - V, E) \geq \min \{ \e/2, \rho(Y - U, F) \}$. 
\end{itemize} 
\end{lemma}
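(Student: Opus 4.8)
The plan is to prove the two parts in order, using the defining conditions $(\natural)_1$–$(\natural)_3$ of a metric covering projection together with the compactness of $Y$.

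For part (1), I would argue as follows. By $(\natural)_2$ every fiber is uniformly discrete, but the discreteness constant might a priori depend on the point $y \in Y$; compactness is what makes it uniform. Concretely, for each $y \in Y$ choose by $(\natural)_1$ a connected open neighborhood $U_y$ of $y$ that is isometrically evenly covered by $\pi$, so $\pi^{-1}(U_y) = \bigsqcup_{\lambda} V_\lambda$ with each $V_\lambda$ mapped isometrically onto $U_y$. Shrinking, pick an open $U_y' \ni y$ with $cl_Y\, U_y' \subset U_y$, and let $2\delta_y := \rho(cl_Y\, U_y', Y - U_y) > 0$. The point of this is that two distinct points $x, x'$ of a single fiber $\pi^{-1}(y)$ lie in distinct sheets $V_\lambda$, so any path (or just the straight comparison via $(\natural)_3$) forcing $x$ to reach $x'$ must have $\pi$-image leaving $U_y$; more directly, if $d(x,x') < \delta_y$ then by $(\natural)_3$ the images stay close, but since $x, x'$ lie over the same point and in different sheets, the relevant separation is bounded below — so $d(x,x') \geq \delta_y$. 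Now $\{U_y'\}_{y \in Y}$ covers the compact space $Y$; extract a finite subcover $U_{y_1}', \dots, U_{y_k}'$ and set $\e := \min_i \delta_{y_i}$. Any fiber $\pi^{-1}(y)$ has $y \in U_{y_i}'$ for some $i$, and then it is $\delta_{y_i}$-discrete, hence $\e$-discrete. I should double-check that the local sheet structure genuinely forces the claimed lower bound on $d(x,x')$ within a sheet neighborhood; this is the one place where the precise interplay of $(\natural)_1$ and the geometry needs care, and I expect it to be the main obstacle — everything else is a compactness packaging argument.

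For part (2), I would fix the $\e$ from part (1) and take $x \in E \subset V$ and $z \in X - V$; the goal is $d(x,z) \geq \min\{\e/2,\, \rho(Y-U, F)\}$. Split into two cases according to whether $\pi(z) \in U$ or not. If $\pi(z) \notin U$, then by $(\natural)_3$, $d(x,z) \geq \rho(\pi(x), \pi(z)) \geq \rho(F, Y - U) \geq \rho(Y-U,F)$, since $\pi(x) \in F$. If $\pi(z) \in U$, then $\pi(z) = \pi(x')$ for the unique $x' \in V$ with $\pi(x') = \pi(z)$ (using that $\pi|_V$ is an isometry onto $U$); since $z \notin V$ we have $z \neq x'$, and $z, x'$ lie in the same fiber $\pi^{-1}(\pi(z))$, so by part (1), $d(z, x') \geq \e$. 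On the other hand $d(x, x') = \rho(\pi(x), \pi(x')) = \rho(\pi(x), \pi(z)) \leq d(x, z)$ by the isometry on $V$ and $(\natural)_3$. If $d(x,z) < \e/2$ then $d(x, x') < \e/2$, whence by the triangle inequality $d(z, x') \leq d(z,x) + d(x, x') < \e/2 + \e/2 = \e$, contradicting $d(z,x') \geq \e$. Therefore $d(x,z) \geq \e/2$ in this case, and combining the two cases gives $d(x,z) \geq \min\{\e/2, \rho(Y-U,F)\}$; taking the infimum over $x \in E$, $z \in X-V$ yields the stated inequality.

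The genuinely delicate step is establishing the uniform sheet-separation underlying part (1); once that lower bound is in hand, part (1) is a routine finite-subcover argument and part (2) is a short case analysis driven by $(\natural)_3$ and the isometry property of the sheet $V$. I would present part (1) first so that part (2) can cite the uniform constant $\e$ directly.
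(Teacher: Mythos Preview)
Your argument for part (2) is correct and is essentially identical to the paper's: the same two-case split on whether $\pi(z)\in U$, the same use of $(\natural)_3$ in the first case, and the same triangle-inequality contradiction via the lifted point $x'\in V$ in the second.

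The gap is in part (1). Your constant $\delta_y := \tfrac12\,\rho(cl_Y U_y',\,Y\setminus U_y)$ is defined purely from the geometry of the \emph{base}, and nothing in $(\natural)_1$ or $(\natural)_3$ converts that into a lower bound on the distance between two points $x,x'$ lying in distinct sheets over the \emph{same} $y$: indeed $\rho(\pi(x),\pi(x'))=0$, so $(\natural)_3$ gives only the trivial bound $d(x,x')\ge 0$, and the sheets $V_\lambda$ are merely disjoint open sets, which can have distance~$0$ absent $(\natural)_2$. Your own parenthetical ``any path \dots must have $\pi$-image leaving $U_y$'' is not available in a general metric space. So $(\natural)_2$ must enter the \emph{local} estimate, not just the preamble.

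The paper's fix is exactly this: for each $y$ first invoke $(\natural)_2$ to get $\e_y>0$ with $\pi^{-1}(y)$ $3\e_y$-discrete, and \emph{then} choose $U_y$ isometrically evenly covered with $\diam U_y\le \e_y$. For $x_\lambda\in V_\lambda$, $x_\mu\in V_\mu$ (distinct sheets) one has $d(x_\lambda,y_\lambda)\le\diam V_\lambda=\diam U_y\le\e_y$ and likewise for $x_\mu$, whence
\[
3\e_y \le d(y_\lambda,y_\mu) \le d(x_\lambda,x_\mu)+2\e_y,
\]
so $d(x_\lambda,x_\mu)\ge\e_y$ and every fiber over $U_y$ is $\e_y$-discrete. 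Your finite-subcover conclusion then goes through verbatim. Replace your $\delta_y$ by this $\e_y$ and part (1) is complete.
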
 

\begin{proof} 
(1) By $(\natural)_1$, $(\natural)_2$ for each $y \in Y$ we can find 
\begin{itemize}
\item[(i)\ ] $\e_y > 0$ such that $\pi^{-1}(y)$ is $3\e_y$-discrete and 
\item[(ii)\,] an open neighborhood $U_y$ of $y$ in $Y$ such that $\diam U_y \leq \e_y$ and $U_y$ is isometrically evenly covered by $\pi$, that is, 
$\pi^{-1}(U_y)$ is the disjoint union of some open subsets $V_y^\lambda$ $(\lambda \in \Lambda_y)$ of $X$ and 
each $V_y^\lambda$ is mapped isometrically onto $U_y$ by $\pi$. 
\end{itemize}
We show that the family $\{ V_y^\lambda \}_{\lambda \in \Lambda_y}$ is $\e_y$-discrete. In particular, for any $z \in U_y$ 
the fiber $\pi^{-1}(z)$ is $\e_y$-discrete. 

To see this claim, take any $\lambda, \mu \in \Lambda_y$ with $\lambda \neq \mu$. 
We have to show that $d(V_y^\lambda, V_y^\mu) \geq \e_y$. 
Let $y_\lambda \in V_y^\lambda$ and $y_\mu \in V_y^\mu$ be the points such that 
$\pi(y_\lambda) = \pi(y_\mu) = y$. 
Then, for any $x_\lambda \in V_y^\lambda$ and $x_\mu \in V_y^\mu$ it follows that 
\begin{itemize}
\item[] $d(x_\lambda, y_\lambda) \leq \diam V_y^\lambda = \diam U_y \leq \e_y$ \ \ and \ \ 
$d(x_\mu, y_\mu) \leq \diam V_y^\mu = \diam U_y \leq \e_y$, \ \ so that 
\vskip 0.5mm 
\item[] $3 \e_y \leq d(y_\lambda, y_\mu) \leq d(y_\lambda, x_\lambda) + d(x_\lambda, x_\mu) + d(x_\mu, y_\mu) \leq d(x_\lambda, x_\mu) + 2 \e_y$ 
\ \ and \ \ $d(x_\lambda, x_\mu) \geq \e_y$. 
\end{itemize}

Since $Y$ is compact, there exist finitely many points $y_1, \cdots, y_n \in Y$ such that $\{ U_{y_1}, \cdots, U_{y_n} \}$ covers $Y$. 
Then $\e = \min \{ \e_{y_1}, \cdots, \e_{y_n} \}$ satisfies the required condition. 

(2) Take any points $x \in E$ and $x' \in X - V$. Let $y = \pi(x)$ and $y' = \pi(x')$. 
\begin{itemize}
\item[(i)\ ] the case that $x' \in \pi^{-1}(U) - V$; Let $x'' \in V$ be the point such that $\pi(x'') = y'$. 
Since $\pi : (V, d) \to (U, \rho)$ is an isometry, we have $d(x, x'') = \rho(y, y')$. 
From $(\natural)_3$ it follows that $\rho(y,y') \leq d(x,x')$.  
Therefore, 
$\e \leq d(x',x'') \leq d(x', x) + d(x, x'') \leq 2d(x', x)$ and $d(x', x) \geq \e/2$. 
\item[(ii)\,] the case that $x' \in X - \pi^{-1}(U)$;  
By $(\natural)_3$ we have $d(x,x') \geq \rho(y,y') \geq \rho(F, Y - U)$.  
\end{itemize}
This implies the assertion. 
\end{proof} 

\subsection{Spaces of uniformly continuous maps} \mbox{} 

First we list some basic facts on the uniform topology on the space of uniformly continuous maps. 
Recall that the definitions of uniformly continuous maps, uniform homeomorphisms and uniform embeddings are included in Section 1.
Below $(X, d)$, $(Y, \rho)$ and $(Z, \eta)$ denote metric spaces. (The metrics $d$, $\rho$ and $\eta$ are also denoted by the symbols $d_X$, $d_Y$ and $d_Z$ respectively. 
As usual, when these metrics are implicitly understood, we eliminate them from the notations.)  
Let ${\cal C}(X, Y)$ and ${\cal C}^u((X, d), (Y, \rho))$ 
denote the space of maps $f : X \to Y$ and 
the subspace of uniformly continuous maps $f : (X, d) \to (Y, \rho)$.  
The metric $\rho$ on $Y$ induces the sup-metric on ${\cal C}(X, Y)$ defined by 
$$\rho(f,g) = \sup \{ \rho(f(x), g(x)) \mid x \in X \} \in [0, \infty].$$ 
The topology on ${\cal C}(X, Y)$ induced by this sup-metric $\rho$ is called the uniform topology.
Below the space ${\cal C}(X,Y)$ and its subspaces are endowed with the sup-metric $\rho$ and 
the uniform topology, otherwise specified. To emphasize this point, sometimes we use the symbol ${\cal C}(X, Y)_u$. 
On the other hand, when the space ${\cal C}(X, Y)$ is endowed with the compact-open topology, 
we use the symbol ${\cal C}(X, Y)_{co}$.  
When $X$ is compact, we have ${\cal C}^u((X, d), (Y, \rho))_u = {\cal C}(X, Y)_{co}$.  

It is important to notice that the composition map 
$${\cal C}^u((X, d), (Y, \rho))_u \times {\cal C}^u((Y, \rho), (Z, \eta))_u \lra {\cal C}^u((X, d), (Z, \eta))_u.$$ 
is continuous, while the composition map ${\cal C}(X, Y)_u \times {\cal C}(Y, Z)_u \lra {\cal C}(X, Z)_u$ is not necessarily continuous. 

Let $\E(X, Y)$ and $\E^u((X, d), (Y, \rho))$ denote the space of embeddings $f : X \to Y$ 
and the subspace of uniform embeddings $f : (X, d) \to (Y, \rho)$ (both with the sup-metric and the uniform topology). 
When $X \subset Y \subset Z$, for a subset $C$ of $Z$ we use the symbol 
$\E(X, Y; C)$ to denote the subspace $\{ f \in \E(X, Y) \mid f = \id \ \text{on} \ X \cap C \}$ and  
for $\e > 0$ let $\E(i_X, \e; X, Y; C)$ denote the closed $\e$-neighborhood of the inclusion $i_X : X \subset Y$ in the space $\E(X, Y; C)$. 
%For a subset $A$ of $X$ let $\E_A(X, Y) = \{ f \in \E(X, Y) \mid f|_A = \id_A \}$. 
The meaning of the symbols $\E^u(X, Y; C)$, $\E^u(i_X, \e; X, Y; C)$, etc are obvious. 

Similarly, for a subset $A$ of $X$ 
let ${\cal H}_A(X)$ denote the group of homeomorphisms $h$ of $X$ onto itself with $h|_A = \id_A$ 
%endowed with the sup-metric and the uniform topology, 
and ${\cal H}^u_A(X, d)$ denote the subgroup of ${\cal H}_A(X)$ consisting of uniform homeomorphisms of $(X, d)$ 
 (both with the sup-metric and the uniform topology). 
We denote by ${\cal H}^u_A(X, d)_0$ the connected component of the identity $\id_X$ in ${\cal H}^u_A(X, d)$ and define 
the subgroup 
$${\cal H}^u_A(X, d)_b = \{ h \in {\cal H}^u_A(X, d) \mid d(h, \id_X) < \infty \}.$$ 
Then ${\cal H}^u_A(X, d)$ is a topological group and ${\cal H}^u_A(X, d)_b$ is an open (and closed) subgroup of ${\cal H}^u_A(X, d)$, so that 
${\cal H}^u_A(X, d)_0 \subset {\cal H}^u_A(X, d)_b$. 

The next lemma follows directly from the definitions. 

\begin{lemma}\label{lemma_unif-emb} 
For any $f \in {\cal C}(X, Y)$ the following conditions are equivalent: 
\begin{enumerate}
\item $f \in \E(X, Y)$ and $f^{-1} : (f(X), \rho) \to (X, d)$ is uniformly continuous. 
\item for any $\e > 0$ there exists $\delta > 0$ such that if $x, x' \in X$ and $d(x, x') \geq \e$ then $\rho(f(x), f(x')) \geq \delta$. 
\end{enumerate}
\end{lemma}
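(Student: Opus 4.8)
The plan is to verify the two implications by directly unwinding the definitions of uniform continuity and uniform embedding, so the argument is essentially a dictionary translation. For the implication $(1)\Rightarrow(2)$, suppose $f\in\E(X,Y)$ with $f^{-1}:(f(X),\rho)\to(X,d)$ uniformly continuous, and fix $\e>0$. Applying uniform continuity of $f^{-1}$ to this $\e$ yields a $\delta>0$ such that whenever $y,y'\in f(X)$ satisfy $\rho(y,y')<\delta$, one has $d(f^{-1}(y),f^{-1}(y'))<\e$. I would then take the contrapositive: if $x,x'\in X$ with $d(x,x')\geq\e$, then writing $y=f(x)$, $y'=f(x')$ and noting $f^{-1}(y)=x$, $f^{-1}(y')=x'$, the condition $d(f^{-1}(y),f^{-1}(y'))<\e$ fails, hence $\rho(f(x),f(x'))\geq\delta$. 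This is exactly condition (2).

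For the reverse implication $(2)\Rightarrow(1)$, assume $f\in{\cal C}(X,Y)$ satisfies (2). First I would check that $f$ is injective: if $x\neq x'$ then $d(x,x')\geq\e$ for $\e=d(x,x')/2>0$, so applying (2) with this $\e$ gives a $\delta>0$ with $\rho(f(x),f(x'))\geq\delta>0$, whence $f(x)\neq f(x')$. Since $f$ is a continuous injection, it is a map onto its image; the only subtlety is that being an embedding requires $f^{-1}:f(X)\to X$ to be continuous, which will follow at once from uniform continuity of $f^{-1}$, so I would not treat it separately. To establish that $f^{-1}:(f(X),\rho)\to(X,d)$ is uniformly continuous, fix $\e>0$, apply (2) to obtain $\delta>0$ as above, and then for $y,y'\in f(X)$ with $\rho(y,y')<\delta$, write $y=f(x)$, $y'=f(x')$ for (unique) $x,x'\in X$; the contrapositive of (2) forces $d(x,x')<\e$, i.e. $d(f^{-1}(y),f^{-1}(y'))<\e$. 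Hence $f^{-1}$ is uniformly continuous and $f\in\E(X,Y)$.

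There is no real obstacle here: the statement is designed so that condition (2) is precisely the ``$\e$--$\delta$'' reformulation of uniform continuity of the inverse, read in the contrapositive direction, and the only point requiring a word is that (2) also secures injectivity (so that $f^{-1}$ is defined). The proof is a short symmetric argument in both directions, and I would present it in two or three lines, as the paper indeed does by saying it ``follows directly from the definitions.''
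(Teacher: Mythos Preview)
Your proposal is correct and is precisely the direct unwinding of definitions that the paper alludes to when it states the lemma ``follows directly from the definitions'' without giving a proof. The only minor remark is that in the injectivity step you could equally well take $\e = d(x,x')$ rather than $d(x,x')/2$, but this is immaterial.
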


Recall that a family $f_\lambda \in {\cal C}(X, Y)$ $(\lambda \in \Lambda)$ is said to be equi-continuous if 
for any $\e > 0$ there exists $\delta > 0$ such that for any $\lambda \in \Lambda$ 
if $x, x' \in X$ and $d(x,x')< \delta$ then $\rho(f_\lambda(x), f_\lambda(x')) < \e$. 
More generally, we say that a family of maps $\{ f_\lambda : (X_\lambda, d_\lambda) \to (Y_\lambda, \rho_\lambda) \}_{\lambda \in \Lambda}$ between metric spaces is equi-continuous if 
for any $\e > 0$ there exists $\delta > 0$ such that for any $\lambda \in \Lambda$ 
if $x, x' \in X_\lambda$ and $d_\lambda(x,x')< \delta$ then $\rho_\lambda(f_\lambda(x), f_\lambda(x')) < \e$. 
For embeddings, we also use the following terminology: a family of embeddings 
$\{ h_\lambda : (X_\lambda, d_\lambda) \to (Y_\lambda, \rho_\lambda)\}_{\lambda \in \Lambda}$ is equi-uniform 
if both of the families $\{ h_\lambda : (X_\lambda, d_\lambda) \to (Y_\lambda, \rho_\lambda)\}_{\lambda \in \Lambda}$ and $\{(h_\lambda)^{-1} : (h_\lambda(X_\lambda), \rho_\lambda) \to (X_\lambda, d_\lambda)\}_{\lambda \in \Lambda}$ are equi-continuous. 

For a subset ${\cal C}$ of ${\cal C}(X, Y)$, the symbol 
$cl_u\,{\cal C}$ means the closure of ${\cal C}$ in ${\cal C}(X, Y)_u$.

\begin{lemma}\label{lemma_equi-conti} 
{\rm (1)} $cl_u\,\E^u(X, Y) \subset {\cal C}^u(X, Y)$. 
\begin{enumerate}
\item[(2)] Suppose ${\cal C} \subset \E^u(X, Y)$.  
If ${\cal C}' = \{ f^{-1} : f(X) \to X \mid f \in {\cal C} \}$ is equi-continuous, then $cl_u\,{\cal C} \subset \E^u(X, Y)$. 
\end{enumerate}
\end{lemma}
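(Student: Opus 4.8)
The plan is to treat the two assertions separately, reducing each to an elementary $\e/3$ argument; for part~(2) the embedding conclusion will then be imported wholesale from Lemma~\ref{lemma_unif-emb}.

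For~(1) I would take $f \in cl_u\,\E^u(X, Y)$ and fix $\e > 0$. Since the closure is taken in $\C(X, Y)_u$, there is $g \in \E^u(X, Y)$ with $\rho(f, g) < \e/3$ --- only this single strict inequality will be used, so the fact that $\rho$ may take the value $\infty$ is harmless. Choosing $\delta > 0$ with $\rho(g(x), g(x')) < \e/3$ whenever $d(x, x') < \delta$ (uniform continuity of $g$), the triangle inequality
$$\rho(f(x), f(x')) \le \rho(f(x), g(x)) + \rho(g(x), g(x')) + \rho(g(x'), f(x')) < \e$$
shows $f \in \C^u(X, Y)$.

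For~(2) I would take $f \in cl_u\,\C$. By~(1) the map $f$ is uniformly continuous, so by Lemma~\ref{lemma_unif-emb} it is enough to verify that for every $\e > 0$ there is $\delta > 0$ with $\rho(f(x), f(x')) \ge \delta$ whenever $d(x, x') \ge \e$; granting this, Lemma~\ref{lemma_unif-emb} gives $f \in \E(X, Y)$ with $f^{-1}$ uniformly continuous, and together with uniform continuity of $f$ this yields $f \in \E^u(X, Y)$. To produce such a $\delta$, I would fix $\e > 0$ and invoke the equi-continuity of $\C'$: there is $\delta_0 > 0$ such that for every $g \in \C$, points $y, y' \in g(X)$ with $\rho(y, y') < \delta_0$ satisfy $d(g^{-1}(y), g^{-1}(y')) < \e$; equivalently (contrapositive plus reparametrization by $g$), for every $g \in \C$ and all $u, u' \in X$ with $d(u, u') \ge \e$ one has $\rho(g(u), g(u')) \ge \delta_0$. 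Then, given $x, x'$ with $d(x, x') \ge \e$, picking $g \in \C$ with $\rho(f, g) < \delta_0/3$ gives
$$\rho(f(x), f(x')) \ge \rho(g(x), g(x')) - \rho(f(x), g(x)) - \rho(f(x'), g(x')) > \delta_0 - \delta_0/3 - \delta_0/3 = \delta_0/3,$$
so $\delta = \delta_0/3$ works.

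I do not expect a serious obstacle; the content is bookkeeping. The two points that must be handled with care are: using the genuine topological closure (rather than sequences), which keeps the $[0,\infty]$-valued sup-metric from causing any trouble; and observing that the hypothesis that $\{f^{-1} : f \in \C\}$ is equi-continuous is exactly what makes the ``reverse modulus of continuity'' of Lemma~\ref{lemma_unif-emb}(2) \emph{uniform} across the family $\C$, and therefore stable under passage to a uniform limit. Once that uniform lower bound has been transferred to the limit map $f$ by the triangle inequality, injectivity, the embedding property, and the uniform continuity of $f^{-1}$ all come for free from Lemma~\ref{lemma_unif-emb}.
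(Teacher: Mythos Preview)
Your proof is correct and follows essentially the same route as the paper's: both parts are handled by the same $\e/3$ triangle-inequality arguments, with part~(2) invoking equi-continuity of $\C'$ to produce a uniform lower bound $\rho(g(x),g(x')) \ge \delta_0$ across all $g \in \C$ and then transferring it to the limit $f$ before appealing to Lemma~\ref{lemma_unif-emb}. The only cosmetic difference is that the paper fixes the approximating $g$ once (before the points $x,x'$), whereas you choose it after; since the resulting $\delta = \delta_0/3$ is independent of $x,x'$ anyway, this makes no difference.
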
 

\begin{proof} (1) Given $f \in cl_u\,\E^u(X, Y)$. To see that $f$ is uniformly continuous, take any $\e > 0$. 
Choose $g \in \E^u(X, Y)$ with $\rho(f,g) < \e/3$. 
Since $g$ is uniformly continuous, there exists $\delta > 0$ such that 
if $x,y \in X$ and $d(x,y) < \delta$ then $\rho(g(x), g(y)) < \e/3$. 
It follows that if $x,y \in X$ and $d(x,y) < \delta$ then
$$\rho(f(x), f(y)) \leq \rho(f(x), g(x)) + \rho(g(x), g(y)) + \rho(g(y), f(y)) < \e.$$ 

(2) Given $f \in cl_u\,{\cal C}$. By (1) $f$ is uniformly continuous. Take any $\e > 0$. 
Since ${\cal C}'$ is equi-continuous, there exists $\delta > 0$ such that 
if $g \in {\cal C}$, $x, y \in X$ and $d(x,y) \geq \e$ then $\rho(g(x), g(y)) \geq 3\delta$. 
Choose $h \in {\cal C}$ with $\rho(f, h) < \delta$. 
It follows that if $x, y \in X$ and $d(x,y) \geq \e$ then 
$$\rho(f(x), f(y)) \geq \rho(h(x), h(y)) - \rho(f(x), h(x)) - \rho(f(y), h(y)) \geq \delta.$$ 
By Lemma~\ref{lemma_unif-emb} this means that $f \in \E^u(X, Y)$. 
\end{proof} 

\begin{lemma}\label{lemma_unif_nbd}
Suppose $A$ is a compact subset of $X$ and $f \in {\cal C}(X, Y)$.
Assume that $\e, \delta >0$ satisfy the following condition: if $x,y \in A$ and $d(x,y) \leq \delta$, then $\rho(f(x), f(y)) < \e$. 
Then there exists an open neighborhood $U$ of $A$ in $X$ such that if $x,y \in U$ and $d(x,y) \leq \delta$, then $\rho(f(x), f(y)) < \e$.
\end{lemma}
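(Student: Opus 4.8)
The plan is to argue by contradiction, exploiting the sequential compactness of the compact metric subspace $A \subset X$ together with the continuity of $f$ and of the metric $\rho$. So, suppose that no open neighborhood $U$ of $A$ in $X$ has the asserted property, and for each $n \in \IN$ apply this to the open neighborhood $U_n = \{ x \in X \mid d(x, A) < 1/n \}$ of $A$ (which is open since $x \mapsto d(x,A)$ is continuous): there must exist points $x_n, y_n \in U_n$ with $d(x_n, y_n) \le \delta$ but $\rho(f(x_n), f(y_n)) \ge \e$. Since $d(x_n, A) < 1/n$, I can choose $a_n, b_n \in A$ with $d(x_n, a_n) < 1/n$ and $d(y_n, b_n) < 1/n$.

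Next, using the compactness of $A$, pass to a subsequence along which $a_n \to a$ and then to a further subsequence along which $b_n \to b$, for some $a, b \in A$. Then $x_n \to a$ and $y_n \to b$, and by continuity of the metric $d(a,b) = \lim_n d(x_n, y_n) \le \delta$. The hypothesis of the lemma now gives $\rho(f(a), f(b)) < \e$. On the other hand, continuity of $f$ and of $\rho$ gives $\rho(f(x_n), f(y_n)) \to \rho(f(a), f(b)) < \e$, so $\rho(f(x_n), f(y_n)) < \e$ for all large $n$ --- contradicting the choice $\rho(f(x_n), f(y_n)) \ge \e$. This contradiction will complete the proof.

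I do not expect a genuine obstacle here: this is a routine compactness-extraction argument, and the only thing to watch is the interplay between strict and non-strict inequalities --- the limit only yields $d(a,b) \le \delta$, but that is precisely the hypothesis of the implication ``$d \le \delta \Rightarrow \rho < \e$'', which suffices to contradict ``$\rho \ge \e$''.

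If a sequence-free argument is preferred, one can instead observe that in the product space $X \times X$ the sets $V = \{ (x,y) \mid \rho(f(x), f(y)) < \e \}$ and $W = \{ (x,y) \mid d(x,y) > \delta \}$ are open, and the hypothesis says exactly that $A \times A \subset V \cup W$; since $A \times A$ is compact there is an open $U \supset A$ in $X$ with $U \times U \subset V \cup W$ (the standard ``rectangular neighborhood of a compact product'' lemma), and this $U$ satisfies the conclusion, because a pair $(x,y) \in U \times U$ with $d(x,y) \le \delta$ cannot lie in $W$ and hence lies in $V$.
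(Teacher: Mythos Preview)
Your main argument is correct and essentially identical to the paper's: both proceed by contradiction, pick witnesses $x_n,y_n$ in the $1/n$-neighborhood of $A$, approximate them by points of $A$, extract convergent subsequences by compactness, and reach the contradiction via continuity of $f$ and the metrics. Your additional sequence-free argument via the tube lemma in $X\times X$ is a nice alternative not present in the paper, but the primary proof matches the paper's approach line for line.
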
 

\begin{proof} We proceed by contradiction. Suppose there does not exist such an open neighborhood $U$. 
Then for each $n \geq 1$ there exists a pair of points $x_n, y_n \in C_{1/n}(A)$ such that $d(x_n, y_n) \leq \delta$ and $\rho(f(x_n), f(y_n)) \geq \e$. 
Choose points $x_n', y_n' \in A$ with $d(x_n, x_n') \leq 1/n$ and $d(y_n, y_n') \leq 1/n$. 
Since $A$ is compact, we can find subsequences $x_{n_i}'$ and $y_{n_i}'$ such that $x_{n_i}' \to x$, $y_{n_i}' \to y$ $(n \to \infty)$ in $A$. 
Then $x_{n_i} \to x$, $y_{n_i} \to y$ $(n \to \infty)$ in $X$ and so $d(x,y) \leq \delta$ and $\rho(f(x), f(y)) \geq \e$. 
This contradicts the assumption. 
\end{proof} 

%A simple example shows that Lemma~\ref{lemma_unif_nbd} does not hold 
%if the condition $d(x,y) \leq \delta$ (which appears both in the assumption and the conclusion) is   
%replaced by the condition $d(x,y) < \delta$. 

\begin{lemma}\label{lemma_conti} Suppose $P$ is a topological space, 
$f : P \to {\cal C}(X, Y)_u$, $g : P \to {\cal C}(X, Z)_u$ are continuous maps 
and $h : P \to {\cal C}^u(Y, Z)_u$ is a function. 
If $f_p$ is surjective and $h_pf_p = g_p$ for each $p \in P$, then $h$ is continuous. 
\end{lemma}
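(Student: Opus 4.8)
The plan is to prove that $h$ is continuous by checking continuity at each point $p_0 \in P$ separately. So fix $p_0 \in P$ and $\e > 0$; the goal is to produce a neighborhood $N$ of $p_0$ in $P$ with $\eta(h_p, h_{p_0}) \leq \e$ for all $p \in N$. The essential observation is that, although the family $\{h_p\}_{p \in P}$ need not be equi-continuous, the single map $h_{p_0}$ \emph{is} uniformly continuous by hypothesis (it lies in $\C^u(Y,Z)$), and it is its modulus of uniform continuity --- which is independent of the varying parameter $p$ --- that will drive the estimate.

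Using uniform continuity of $h_{p_0}$, I would first choose $\delta > 0$ so that $\rho(y,y') < \delta$ implies $\eta(h_{p_0}(y), h_{p_0}(y')) < \e/2$ for $y, y' \in Y$. Since $f : P \to \C(X,Y)_u$ and $g : P \to \C(X,Z)_u$ are continuous at $p_0$ (in the uniform topologies), I can then pick a neighborhood $N$ of $p_0$ such that
\[
\rho(f_p, f_{p_0}) < \delta \quad \text{and} \quad \eta(g_p, g_{p_0}) < \e/2 \qquad (p \in N).
\]

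Now fix $p \in N$ and an arbitrary point $y \in Y$. Since $f_p$ is surjective, choose $x \in X$ with $f_p(x) = y$. Then $h_p(y) = h_p(f_p(x)) = g_p(x)$ and $h_{p_0}(y) = h_{p_0}(f_p(x))$, while $g_{p_0}(x) = h_{p_0}(f_{p_0}(x))$ and $\rho(f_{p_0}(x), f_p(x)) \leq \rho(f_{p_0}, f_p) < \delta$, so that
\begin{align*}
\eta\big(h_p(y), h_{p_0}(y)\big)
&= \eta\big(g_p(x),\, h_{p_0}(f_p(x))\big) \\
&\leq \eta\big(g_p(x),\, g_{p_0}(x)\big) + \eta\big(h_{p_0}(f_{p_0}(x)),\, h_{p_0}(f_p(x))\big) \\
&< \e/2 + \e/2 = \e.
\end{align*}
Taking the supremum over $y \in Y$ yields $\eta(h_p, h_{p_0}) \leq \e$ for every $p \in N$, which establishes continuity of $h$ at $p_0$; as $p_0$ was arbitrary, $h$ is continuous.

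I do not anticipate a genuine obstacle, since the argument is essentially bookkeeping with the triangle inequality. The one point that requires care --- and the only place where surjectivity of $f_p$ is used --- is to write $y = f_p(x)$ with $x$ a preimage under $f_p$ rather than under $f_{p_0}$; this is precisely what makes the comparison land on the fixed, uniformly continuous map $h_{p_0}$ instead of on the uncontrolled maps $h_p$, whose moduli of uniform continuity are not available uniformly in $p$.
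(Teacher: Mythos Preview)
Your proof is correct and follows essentially the same route as the paper's: fix a base point, use the uniform continuity of $h_{p_0}$ to pick $\delta$, use continuity of $f$ and $g$ to find the neighborhood, then exploit surjectivity of $f_p$ to write $y=f_p(x)$ and compare via the triangle inequality through $g_{p_0}(x)=h_{p_0}(f_{p_0}(x))$. The paper compresses the pointwise estimate into a single line of sup-metric inequalities, but the argument is the same.
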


\begin{proof} Given any point $p \in P$ and any $\e > 0$. 
Since $h_p$ is uniformly continuous, there exists $\delta > 0$ such that 
if $y_1, y_2 \in Y$ and $d_Y(y_1, y_2) < \delta$, then $d_Z(h_p(y_1), h_p(y_2)) < \e/2$.  
Since $f, g$ are continuous, there exists a neighborhood $U$ of $p$ in $P$ such that 
$d_Y(f_p, f_q) < \delta$ and $d_Z(g_p, g_q) < \e/2$ for each $q \in U$. 
Then for each $q \in U$ it follows that 
$$d_Z(h_q, h_p) = d_Z(h_qf_q, h_pf_q) \leq d_Z(g_q, g_p) + d_Z(h_pf_p, h_pf_q) < \e/2 + \e/2 = \e.$$ 
\vskip -7mm 
\end{proof}

\begin{lemma}\label{lemma_collar} 
Suppose $S$ is a compact subset of $X$ which has an open collar neighborhood $\theta : (S \times [0, 4), S \times \{ 0 \}) \approx (N, S)$ in $X$. Let $N_a = \theta(S \times [0,a])$ $(a \in [0,4))$. Then there exists a strong deformation retraction 
$\phi_t$ $(t \in [0,1])$ of ${\cal H}^u_{N_1}(X)_b$ onto ${\cal H}^u_{N_2}(X)_b$ such that 
$$\mbox{$\phi_t(h) = h$ \ on \ $h^{-1}(X - N_3) - N_3$ \ \ for any \ $(h,t ) \in {\cal H}^u_{N_1}(X)_b \times [0,1]$.}$$  
\end{lemma}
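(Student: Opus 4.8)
The plan is to enlarge the fixed set from $N_1$ to $N_2$ by an Alexander‑type absorption along the collar $\theta$, using only the portion of the collar out to $N_3$ together with its $h$‑preimage. Concretely, I would first pass to the collar coordinates given by $\theta$: writing points of $N$ as $\theta(s,u)$ with $(s,u)\in S\times[0,4)$, the hypothesis ``$h=\id$ on $N_1$'' says that $h\circ\theta$ and $\theta$ agree on $S\times[0,1]$, and, $S$ being compact, $h$ is uniformly close to the identity on a one‑sided neighbourhood $\theta(S\times[1,1+\delta])$ for small $\delta$. For each $h\in{\cal H}^u_{N_1}(X)_b$ I would then build a path $t\mapsto\psi^h_t$ of homeomorphisms of $X$, supported in ${\rm Int}_X N_3$, with $\psi^h_0=\id_X$, depending continuously on $(h,t)$ and stretching the collar outward; the deformation $\phi_t(h)$ is obtained by conjugating $h$ by $\psi^h_t$ and, near $N_2$, additionally straightening so that at $t=1$ the map is the identity on the whole sub‑collar $\theta(S\times[1,2])$, hence (with $h=\id$ on $N_1$) on $N_2$. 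Keeping $\psi^h_t$ supported in $N_3$ forces $\phi_t(h)=h$ off $N_3\cup h^{-1}(N_3)$ — this is exactly the source of the stated condition ``$\phi_t(h)=h$ on $h^{-1}(X-N_3)-N_3$'' — and, since $N_3$ is compact (hence of finite diameter) and $d(h,\id_X)<\infty$, it also gives $d(\phi_t(h),\id_X)<\infty$. Crucially, I would arrange the family $\psi^h_t$ to be the constant family whenever $h$ is already the identity on $N_2$; this is precisely what makes $\phi$ a \emph{strong} deformation retraction onto ${\cal H}^u_{N_2}(X)_b$. Accordingly the amount of stretching at collar level $u\in[1,3]$ should be governed by a continuous non‑negative gauge measuring how badly $h$ fails to fix the collar beyond level $u$, one which vanishes on $[1,2]$ exactly when $h\in{\cal H}^u_{N_2}(X)_b$.

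Granting such a family, the remaining verifications are bookkeeping with the tools of Section~2.3. That each $\phi_t(h)$ is again a uniform homeomorphism follows from the continuity of composition of uniformly continuous maps together with Lemma~\ref{lemma_equi-conti}, applied to the equi‑uniform family $\{\psi^h_t\}$ (equi‑uniform because its members are supported in the fixed compact set $N_3$ and vary continuously); the continuity of $\phi:{\cal H}^u_{N_1}(X)_b\times[0,1]\to{\cal H}^u_{N_1}(X)_b$ follows from the same composition‑continuity together with Lemmas~\ref{lemma_conti} and~\ref{lemma_unif_nbd}; and $\phi_0=\id$, the fact that $\phi_1$ lands in ${\cal H}^u_{N_2}(X)_b$, and the fact that $\phi_t$ preserves ${\cal H}^u_{N_1}(X)_b$ all read off from the corresponding features of $\psi^h_t$ (that it fixes the levels $\theta(S\times[0,1])$ and at $t=1$ squeezes the level $\theta(S\times\{2\})$ into $\theta(S\times[0,1])$).

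The main obstacle is the construction of the absorbing family $\psi^h_t$ itself, for two reasons. First, $h$ need not preserve the collar $N$, and, being only bounded rather than compactly supported, can carry points of $N_3$ a large (if bounded) distance away; so the stretching has to be organised around the two compact sets $N_3$ and $h^{-1}(N_3)$, and one must check that the reparametrisations can be made compatible there and continuous in $h$. Second, and more seriously, one needs \emph{simultaneously} that at $t=1$ every $h$ is moved into ${\cal H}^u_{N_2}(X)_b$ and that the whole homotopy is constant on ${\cal H}^u_{N_2}(X)_b$; a conjugation by a fixed collar‑squeeze achieves the first but not the second, so the squeeze must be tapered by the $h$‑dependent gauge above while still being guaranteed to reach the level $\theta(S\times[1,2])$. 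Making these two requirements coexist, continuously in $h$, is where the real work lies.
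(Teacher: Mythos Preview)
Your plan is essentially the paper's: conjugate $h$ by a collar-stretching family $\xi_t(s)$ with $s=\alpha(h)\in[0,1]$ a single real parameter chosen so that $\alpha^{-1}(0)={\cal H}^u_{N_2}(X)_b$. The execution is simpler than you anticipate: $\xi_t(s)$ just acts piecewise-linearly on the $[0,4)$-coordinate and is supported in $N_{2+s}\subset N_3$ (no level-dependent gauge, nothing to organise around $h^{-1}(N_3)$), and since $\xi_1(s)$ for $s>0$ carries $N_1$ bijectively onto $N_2$, pure conjugation $\phi_t(h)=\xi_t(\alpha(h))\,h\,\xi_t(\alpha(h))^{-1}$ already gives $\phi_1(h)=\id$ on $N_2$ with no additional straightening. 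Your obstacle~2 is the genuine point and is resolved by allowing $\xi_1(0)$ to be the degenerate map collapsing the collar levels $[1,2]$ to $\{2\}$, setting $\phi_t(h)=h$ by hand for $h\in{\cal H}^u_{N_2}(X)_b$, and reading continuity across $\alpha^{-1}(0)$ off the identity $\phi_t(h)\,\xi_t(\alpha(h))=\xi_t(\alpha(h))\,h$ (valid for \emph{all} $h$, since $h\in{\cal H}^u_{N_2}(X)_b$ commutes with $\xi_t(0)$) via Lemma~\ref{lemma_conti}.
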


\begin{proof} 
Consider the map $\gamma : [0, 1] \lra {\cal C}([0, 4), [0,4))$ defined by 
\vspace*{1mm}   
$$\gamma(s)(u) 
= \left\{ 
\begin{array}[c]{ll}
\ \ 2 u & (u \in [0, 1]) \\[1.5mm]
\ds \frac{s}{1+s}(u-1) + 2 \ & (u \in [1, 2+s]) \\[2.5mm] 
\ \ u & (u \in [2+s, 4))
\end{array} \right. \hspace{5mm} (s \in [0, 1])$$ 
\vskip 1mm 
\noindent and the homotopy $\lambda : [0, 1] \times [0,1] \lra {\cal C}([0, 4), [0,4))$ defined by 
$$\lambda_t(s)(u) = (1-t)u + t \gamma(s)(u) \hspace{6mm} ((s,t) \in [0, 1] \times [0,1], u \in [0, 4)).$$

The homotopy $\lambda$ induces a pseudo-isotopy  
\begin{itemize}
\item[] $\xi : [0, 1] \times [0,1] \lra {\cal C}^u(X, X)_u$ : 
\vspace{1mm} 
$$\xi_t(s)(x) = \left\{ 
\begin{array}[c]{@{\ }ll}
\theta(z, \lambda_t(s)(u)) & (x = \theta(z,u), (z,u) \in S \times [0,4)) \\[2mm] 
\ x & (x \in X - N_3)
\end{array}\right. 
\hspace{5mm} ((s,t) \in [0, 1] \times [0,1])$$ 
\end{itemize} 
\vskip 1mm 
satisfying the following properties : 
\begin{itemize}
\item[(1)] 
\begin{itemize}
\item[(i)\ ] $\xi_0(s) = \id_X$, \hspace{5mm} (ii) \ $\xi_t(s) = \id$ on $X - N_{2+s}$ \ \ ($(s,t) \in [0, 1] \times [0,1]$), 
\item[(iii)] $\xi_t(s) \in {\cal H}^u(X)$ \ \ ($(s,t) \in [0,1] \times [0,1] - \{ (0,1) \}$). 
\end{itemize} 
\end{itemize} 

Choose a map $\alpha : {\cal H}^u_{N_1}(X)_b \to [0,1]$ such that $\alpha^{-1}(0) = {\cal H}^u_{N_2}(X)_b$. 
By (1)(iii) we can define the homotopy 
\vspace{1mm} 
$$\phi : {\cal H}^u_{N_1}(X)_b \times [0,1] \lra {\cal H}^u_{N_1}(X)_b : \ 
\phi_t(h) = 
\left\{ \begin{array}[c]{@{\,}ll}
\xi_t(\alpha(h)) \, h \,\xi_t(\alpha(h))^{-1} & (h \in {\cal H}^u_{N_1}(X)_b - {\cal H}^u_{N_2}(X)_b),  \\[2mm] 
\ h & (h \in {\cal H}^u_{N_2}(X)_b). 
\end{array} \right.$$

\vskip 1mm 
\noindent If $h \in {\cal H}^u_{N_1}(X)_b - {\cal H}^u_{N_2}(X)_b$, then 
$\xi_t(\alpha(h))^{-1}(N_1) \subset N_1$ and $h = \id$ on $N_1$, so that $\phi_t(h) = \id$ on $N_1$. 
Since $\xi_t(0)(N_2) = N_2$ and $\xi_t(0) = \id$ on $X - N_2$ by (1)(ii), it follows that 
$h \,\xi_t(0) = \xi_t(0) h$ $((h,t) \in {\cal H}^u_{N_2}(X)_b \times [0,1])$ and so  
\begin{itemize}
\item[(2)] $\phi_t(h) \, \xi_t(\alpha(h)) = \xi_t(\alpha(h)) \, h$ \ \ $((h,t) \in {\cal H}^u_{N_1}(X)_b \times [0,1])$. 
\end{itemize}
Hence, the continuity of $\phi$ follows from Lemma~\ref{lemma_conti} 
applied to the parameter space $P = {\cal H}^u_{N_1}(X)_b \times [0,1]$ and the maps 
$$\mbox{$f : P\lra {\cal C}^u(X, X)_u$ : \ $f(h,t) = \xi_t(\alpha(h))$ \ \ and \ \ $g : P\lra {\cal C}^u(X, X)_u$ : \ $g(h, t) = \xi_t(\alpha(h)) h$.}$$ 
For each $h \in {\cal H}^u_{N_1}(X)_b - {\cal H}^u_{N_2}(X)_b$ we have 
$\xi_1(\alpha(h))^{-1}(N_2) = N_1$, so $\phi_1(h) = \id$ on $N_2$. 
These observations imply that $\phi$ is a strong deformation retraction of ${\cal H}^u_{N_1}(X)_b$ onto ${\cal H}^u_{N_2}(X)_b$.
Finally, the defining property $\xi_t(s) = \id$ on $X - N_3$ leads to the additional property 
$\phi_t(h) = h$ on $h^{-1}(X - N_3) - N_3$. 
This completes the proof. 
\end{proof}

\subsection{Basic deformation theorem for topological embeddings in topological manifolds} \mbox{} 

Next we recall the basic deformation theorem on embeddings of a compact subset in topological manifold. 
Suppose $M$ is a topological $n$-manifold possibly with boundary and $X$ is a subspace of $M$. 
An embedding $f : X \to M$ is said to be 
\begin{itemize}
\item[(i)\ ] proper if $f^{-1}(\partial M) = X \cap \partial M$ and 
\item[(ii)\,] quasi-proper if $f(X \cap \partial M) \subset \partial M$. 
\end{itemize}
For any subset $C \subset M$, let 
${\mathcal E}_\ast(X, M; C)$ and ${\mathcal E}_\#(X, M; C)$ denote the subspaces of ${\mathcal E}(X, M; C)$ consisting of proper embeddings and quasi-proper embeddings respectively. 
%(When these spaces are endowed with the compact-open topology, we use the symbols ${\mathcal E}_\ast(X, M; C)_{co}$ and ${\mathcal E}_\#(X, M; C)_{co}$.) 
Note that ${\mathcal E}_\#(X, M; C)$ is closed in ${\mathcal E}(X, M; C)$ 
(while this does not necessarily hold for ${\mathcal E}_\ast(X, M; C)$) and that 
for any $f \in {\mathcal E}_\#(X, M; C)$ the restriction of $f$ to ${\rm Int}_M X$ is a proper embedding. 
These properties are the reasons why we introduce the space of quasi-proper embeddings. 
In fact, in Section 3 we need to consider the closures of 
some collections of proper embeddings when we apply the Arzela-Ascoli theorem. 

\begin{theorem}\label{thm_basic_deform} $($\cite[Theorem 5.1]{EK}$)$  
Suppose $M$ is a topological $n$-manifold possibly with boundary,  
$C$ is a  compact subset of $M$, 
$U$ is a neighborhood of $C$ in $M$ and 
$D$ and $E$ are two closed subsets of $M$ such that $D \subset {\rm Int}_M E$.
Then, for any compact neighborhood $K$ of $C$ in $U$,  
there exists a neighborhood $\U$ of $i_U$ in ${\mathcal E}_\ast(U, M; E)$ 
and a homotopy \ 
$\varphi : \U \times [0, 1] \lra {\mathcal E}_\ast(U, M; D)$ \  such that 
\begin{itemize}
\item[{\rm (1)}]  for each $f \in {\cal U}$, 
\begin{tabular}[t]{c@{\ \,}l}
{\rm (i)} & $\varphi_0(f) = f$, \hspace{3mm} {\rm (ii)} 
$\varphi_1(f)|_C = i_C$, \hspace{3mm} {\rm (iii)} 
$\varphi_t(f) = f$ on $U - K$ $(t \in [0,1])$, \\[2mm] 
{\rm (iv)} & if $f = \id$ on $U \cap \partial M$, then $\varphi_t(f) = \id$ on $U \cap \partial M$ $(t \in [0,1])$, 
\end{tabular}
\vskip 1.5mm 
\item[{\rm (2)}] $\varphi_t(i_U) = i_U$ $(t \in [0,1])$. 
\end{itemize}
\end{theorem}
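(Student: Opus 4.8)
The plan is to deduce Theorem~\ref{thm_Euclid-end} from the local deformation theorem, Theorem~\ref{thm_local_deformation}, using the similarity transformations of the Euclidean ends to trade a global deformation problem for a near-identity one, together with a collar push of the type established in Lemma~\ref{lemma_collar}. Since $L_1,\dots,L_m$ are mutually disjoint and every operation below will be supported in $\bigcup_i\big(L_i'\cup h^{-1}(L_i')\big)$, it suffices to treat a single bi-Lipschitz Euclidean end $L$, with coordinate $\theta\colon(\IR^n_1,\partial\IR^n_1)\approx(L,{\rm Fr}_XL)$ (so $L_r=\theta(\IR^n_r)$, $L'=L_2$, $L''=L_3$); the general statement follows by running the one-end deformations simultaneously on the disjoint ends. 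Two observations drive the construction. First, if $\theta$ is $C$-bi-Lipschitz then, since $d(X-L,L_r)\to\infty$, any $h\in{\cal H}^u(X)_b$ with $K:=d(h,\id_X)<\infty$ satisfies $h(L_r)\cup h^{-1}(L_r)\subset L$ and $L_{r+CK}\subset h(L_r)\subset L_{r-CK}$ for all $r\ge r_0(K)$, so $h$ carries far-out shells of the end to far-out shells with bounded error $CK$. Second, the constraint $\phi_t(h)=h$ on $h^{-1}(X-L')-L'$ is genuinely restrictive only where $h$ keeps part of the core $X-L'$ inside $X-L'$; far out along the end, and in particular for $h$ of large displacement, it is essentially vacuous.

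For $\lambda>0$ let $\mu_\lambda(x)=\lambda x$ on $\IR^n$, so $\mu_\lambda(\IR^n_r)=\IR^n_{\lambda r}$ and conjugation by $\mu_\lambda$ divides sup-displacements by $\lambda$; these are the similarity transformations relating the model ends $\IR^n_r$. Observe that $\pi\colon\IR^n\to\IR^n/\IZ^n=T^n$ is a metric covering projection onto the compact manifold $T^n$, so Theorem~\ref{thm_local_deformation} applies with $M=\IR^n$: taking the closed set to be $\IR^n_3$, a uniform neighborhood $\IR^n_2$ of it, and closed sets $Z\subset Y$ arranged so that the deformation keeps a prescribed compact region pointwise fixed, one gets a neighborhood $\W$ of $i_{\IR^n_2}$ and a homotopy $\psi\colon\W\times[0,1]\to\E^u_\ast(\IR^n_2,\IR^n;Z)$ with $\psi_1(g)=\id$ on $\IR^n_3$, $\psi_t(g)=g$ off a uniform neighborhood of $\IR^n_3$ in $\IR^n_2$, and $\psi_t(g)(\IR^n_2)=g(\IR^n_2)$. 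Conjugating $\psi$ by $\mu_\lambda$ — a homeomorphism of the embedding spaces involved — produces, for each scale $\lambda$, a deformation $\psi^{(\lambda)}$ defined on a $\lambda\e_0$-neighborhood of $i_{\theta(\IR^n_{2\lambda})}$ that at time $1$ makes an embedding equal to the identity on $L_{3\lambda}$ while being supported in $L_{2\lambda}$. By the first observation, whenever $\lambda$ is large enough relative to $K$ (say $2\lambda\ge r_0(K)$ and $CK<\lambda\e_0$) the restriction of $h$ to $L_{2\lambda}$, read in the coordinate $\theta$, lies in that neighborhood, so applying $\psi^{(\lambda)}$ to it and extending by $h$ outside deforms $h$ to a uniform homeomorphism equal to $\id$ on $L_{3\lambda}$ and differing from $h$ only inside $L_{2\lambda}\subset L'$.

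It remains to enlarge the fixed set from the far shell $L_{3\lambda}$ to all of $L_3=L''$ and to organize everything into a single continuous homotopy. The enlargement is a collar push: every frontier ${\rm Fr}_XL_t$ with $t>1$ is bicollared in $X$ (both sides lie inside $L$, where the coordinate $\theta$ is available), so, exactly as the pseudo-isotopy $\xi$ is used in the proof of Lemma~\ref{lemma_collar}, one conjugates the current homeomorphism by a uniformly controlled isotopy of $X$ supported in $L'$ which stretches the fixed shell $L_{3\lambda}$ over $L_3$, arriving in ${\cal H}^u_{L_3}(X)$ without violating the support condition. Concatenating the two homotopies and reparametrizing gives $\phi\colon{\cal H}^u(X)_b\times[0,1]\to{\cal H}^u(X)_b$ with $\phi_0=\id$, $\phi_1\big({\cal H}^u(X)_b\big)\subset{\cal H}^u_{L''}(X)$, $\phi_t=\id$ on ${\cal H}^u_{L''}(X)$, and $\phi_t(h)=h$ on $h^{-1}(X-L')-L'$. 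Continuity is checked, as elsewhere in the paper, by means of Lemma~\ref{lemma_conti} (which keeps the conjugations continuous although the families at hand need not be equi-uniform) and Lemma~\ref{lemma_equi-conti} (to control the closures of the rescaled families fed into Theorem~\ref{thm_local_deformation}), after arranging the scale $\lambda=\lambda(h)$ to depend continuously on $h$ — e.g.\ through a continuous upper estimate for $d(h,\id_X)$, normalized so that $h\in{\cal H}^u_{L''}(X)$ yields the constant homotopy.

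The main obstacle is not any single geometric move but the global continuity of $\phi$: the $h$-dependent scale $\lambda(h)$, the scale-dependent applications of Theorem~\ref{thm_local_deformation}, and the collar pushes must be fitted together so that the homotopy varies continuously across the transition from $h$ close to the identity near the end — where the support condition is restrictive and the local theorem must be invoked at a bounded scale — to $h$ of large displacement, where the support condition is vacuous and a large scale is used; moreover the collar-push distance grows with $\lambda$, so it must be carried out scale by scale rather than in one stroke. This is precisely the subtlety flagged in the Remark after Example~\ref{example}: conjugating by a single fixed shrinking homeomorphism and extending by the identity is not continuous in the uniform topology, and it is the similarity rescaling together with the continuity lemmas of Section~2.3 that circumvents it.
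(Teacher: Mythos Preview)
Your proposal addresses the wrong statement. The theorem in question is Theorem~\ref{thm_basic_deform}, the Edwards--Kirby local deformation theorem for embeddings of a compact set in a topological manifold. You have instead sketched an argument for Theorem~\ref{thm_Euclid-end}, the global deformation result for metric spaces with bi-Lipschitz Euclidean ends. These are entirely different statements: Theorem~\ref{thm_basic_deform} is the foundational input (a compact, local result about arbitrary topological manifolds), while Theorem~\ref{thm_Euclid-end} is a downstream application (a global result about a specific class of metric spaces) that in the paper is derived from Theorem~\ref{thm_local_deformation}, which in turn rests on Theorem~\ref{thm_basic_deform}.

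Moreover, the paper does not give its own proof of Theorem~\ref{thm_basic_deform}; it is quoted as \cite[Theorem~5.1]{EK} and used as a black box. So there is nothing in the paper to compare a proof of that statement against. If your intent was actually to prove Theorem~\ref{thm_Euclid-end}, then your outline is broadly in the spirit of the paper's Section~4 (rescaling via similarity transformations to reduce to the near-identity regime where Theorem~\ref{thm_local_deformation} applies, followed by a collar push as in Lemma~\ref{lemma_collar}), but that is a separate review; as a proof of the stated Theorem~\ref{thm_basic_deform} it is simply off target.
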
 

\begin{remark}
In \cite{EK} the spaces ${\mathcal E}_\ast(U, M; E)$ and ${\mathcal E}_\ast(U, M; D)$ are endowed with the compact-open topology. Even if we replace the compact-open topology with the uniform topology, ${\cal U}$ is still a neighborhood of $i_U$ and the homotopy $\phi$ is continuous since 
the deformation is supported in the compact subset $K$ by the condition (1)(iii). 
\end{remark}

\begin{compl}\label{compl} Theorem~\ref{thm_basic_deform} still holds 
if we replace the spaces of proper embeddings, 
${\mathcal E}_\ast(U, M; D)$ and ${\mathcal E}_\ast(U, M; E)$,  
by the spaces of quasi-proper embeddings, 
${\mathcal E}_\#(U, M; D)$ and ${\mathcal E}_\#(U, M; E)$. 
\end{compl}

In fact, the quasi-proper case is derived from the proper case by the following observation. 
First we apply the proper case to ${\rm Int}_M\,U$ instead of $U$ itself, so to obtain the deformation $\phi_t$ of proper embeddings of ${\rm Int}_M\,U$. 
If $h \in {\mathcal E}_\#(U, M; E)$ is close to $i_U$, then 
we obtain the deformation $\phi_t(h|_{{\rm Int}_M\,U})$ of the restriction $h|_{{\rm Int}_M\,U}$.
Then, the condition (1)(iii) guarantees that it extends by using $h$ itself to a deformation of $h$. 

\section{Deformation lemma for uniform embeddings} 

In this section, from the deformation theorem for embeddings of compact spaces (Theorem~\ref{thm_basic_deform}) 
we derive Theorem~\ref{thm_local_deformation}, a deformation theorem for uniform embeddings in a metric covering space over a compact manifold. When passing to the uniform case from the compact case, 
the Arzela-Ascoli theorem (\cite[Theorem 6.4]{Du}) shall play an essential role. 

\subsection{Product covering case} \mbox{}  

First we consider the product covering case.  
Throughout this subsection we work under the following assumption. 

\begin{notation}\label{notation-1} 
Suppose $\pi : (M, d) \to (N, \rho)$ is a metric covering projection and $N$ is a compact topological $n$-manifold possibly with boundary. Suppose $U$ is a connected open subset of $N$ isometrically evenly covered by $\pi$, 
$C$ is a compact subset of $U$ and 
$K$ is a compact neighborhood of $C$ in $U$. 
Suppose $W$ is a subset of $M$ and 
${\cal V}$ is a collection of connected components of $\pi^{-1}(U)$  such that $V \equiv \cup\,{\cal V} \subset W$. 
Let $X = \pi^{-1}(C) \cap V$ and $P = \pi^{-1}(K) \cap V$. 
\end{notation}

%${\mathcal E}_\#(i_L, \e; L, U; E)$ denote the closed $\e$-neighborhood of $i_L$ in ${\mathcal E}_\#(L, U; E)$
The first lemma establishes a fundamental deformation theorem for uniform embeddings in the simplest case. 

\begin{lemma}\label{lem_1} 
Suppose $D$ and $E$ are closed subsets of $N$ with $D \subset {\rm Int}_N E$ and 
$Z \subset Y$ are subsets of $M$ such that 
$Y \cap V = \pi^{-1}(E) \cap V$ and $Z \cap V = \pi^{-1}(D) \cap V$.  
Then there exists a neighborhood $\W$ of the inclusion map $i_W : W \subset M$ in $\E^u_\#(W, M; Y)$ and 
a homotopy $\phi : \W \times [0,1] \lra \E^u_\#(W, M; Z)$ such that 
\begin{itemize} 
\item[(1)] for each $h \in \W$ \\ 
\begin{tabular}[t]{c@{\ \,}l}
{\rm (i)} & $\phi_0(h) = h$, \hspace{3mm} 
{\rm (ii)} $\phi_1(h) = \id$ on $X$, \hspace{3mm} 
{\rm (iii)} $\phi_t(h) = h$ on $W - P$ \ $(t \in [0,1])$, \\[2mm] 
{\rm (iv)} & $\phi_t(h)(P) \subset V$ and \ $\phi_t(h)(V) = h(V)$ \ $(t \in [0,1])$, \\[2mm] 
{\rm (v)} & if $h = \id$ on $W \cap \partial M$, then $\varphi_t(h) = \id$ on $W \cap \partial M$ $(t \in [0,1])$, 
\end{tabular}
\vskip 1.5mm 
\item[(2)] $\phi_t(i_W) = i_W$ \ $(t \in [0,1])$.
\end{itemize} 
\end{lemma}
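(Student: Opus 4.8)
The plan is to reduce the uniform statement to the compact deformation theorem (Theorem~\ref{thm_basic_deform}, together with its Complement for quasi-proper embeddings) applied \emph{downstairs}, on the compact manifold $N$, and then to lift the resulting deformation \emph{fiberwise} through the metric covering projection $\pi$. Since $U$ is isometrically evenly covered, $\pi$ restricts to an isometry $\pi_\lambda : V_\lambda \to U$ on each component $V_\lambda$ of $\V$, so that the pieces $X_\lambda = X \cap V_\lambda$, $P_\lambda = P \cap V_\lambda$, etc., are all isometric copies of $C$, $K$, etc. The point of the covering hypothesis and of Lemma~\ref{lemma_covering_proj} is precisely that these copies are \emph{uniformly spread out}: the fibers are $\e$-discrete for a uniform $\e>0$, and $d(M - V, E) \geq \min\{\e/2, \rho(N-U, F)\}$ for the relevant subsets, so a deformation performed independently on each $V_\lambda$ inside a compactly supported region stays equi-uniform and hence continuous in the sup-metric.

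First I would fix, via Theorem~\ref{thm_basic_deform} and the Complement, a neighborhood $\U$ of $i_U$ in $\E^u_\#(U, N; E)$ (in the uniform topology, which on the compact space $U$ coincides with the compact-open topology) and a homotopy $\psi : \U \times [0,1] \to \E^u_\#(U, N; D)$ supported in the compact neighborhood $K$ of $C$, with $\psi_0 = \id$, $\psi_1(g)|_C = i_C$, $\psi_t(g) = g$ off $K$, and the boundary condition (1)(iv). Next I would define $\W$ to be the set of $h \in \E^u_\#(W, M; Y)$ that are close enough to $i_W$ that, on every component $V_\lambda$, the conjugated map $\pi_\lambda h \pi_\lambda^{-1}|_U$ lands in $\U$ and, crucially, the image $h(P_\lambda)$ stays inside $V_\lambda$ (this uses the distance estimate of Lemma~\ref{lemma_covering_proj}(2): if $h$ moves points of $P$ by less than, say, $\e/4$ then it cannot move a point of $P_\lambda$ out of $V_\lambda$, and the distance between distinct $V_\lambda$'s is at least $\e$). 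For such $h$, I would set $\phi_t(h)$ to equal, on each $V_\lambda$, the lift $\pi_\lambda^{-1}\,\psi_t\!\big(\pi_\lambda h \pi_\lambda^{-1}\big)\,\pi_\lambda$, and to equal $h$ on $W - P = W - \bigcup_\lambda P_\lambda$; these agree on the overlap because $\psi$ is supported in $K$ so its lift is supported in $P_\lambda$. Properties (1)(i),(ii),(iii) and (iv) (renamed here) transfer directly from the corresponding properties of $\psi$, property (1)(iv)-image ($\phi_t(h)(V) = h(V)$) holds because on each $V_\lambda$ the map $\psi_t(g)$ is a self-embedding of $U$ with image $g(U)$ and the support is interior, and (2) holds because $\psi_t(i_U) = i_U$.

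The main obstacle is the continuity of $\phi$ as a map $\W \times [0,1] \to \E^u_\#(W, M; Z)$ \emph{in the uniform topology}, and the verification that $\phi_t(h)$ really is a uniform embedding (not merely an embedding). For the latter I would invoke Lemma~\ref{lemma_equi-conti}(2): the family $\{\psi_t(g)^{-1}\}$ over all $g \in \U$ and $t \in [0,1]$, reinterpreted on each $V_\lambda$, is equi-continuous — this is where I need that the deformation theorem downstairs produces, on the compact manifold, a locally bounded (hence equi-uniform) family of embeddings, which follows from the Arzela–Ascoli theorem since $\U$ may be taken with compact closure and the maps are supported in $K$. Combined with the fact that distinct $V_\lambda$ are uniformly separated and $\phi_t(h) = h$ outside $\bigcup_\lambda P_\lambda$ with $h$ itself equi-uniform near $i_W$, the glued family $\{\phi_t(h)\}$ is equi-uniform, so $\phi_t(h) \in \E^u_\#(W, M; Z)$ and, shrinking $\W$ if necessary, the assignment is continuous: continuity at a point $(h_0, t_0)$ reduces, via the uniform separation of the $V_\lambda$, to the continuity of $\psi$ on each factor plus the equi-continuity that controls the finitely-many-at-a-time behavior uniformly across the infinitely many sheets. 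I would also use Lemma~\ref{lemma_unif_nbd} to promote the compact-set estimates on $C$ and $K$ to estimates on genuine uniform neighborhoods, which is what lets the "close to $i_W$" condition defining $\W$ be a genuine sup-metric neighborhood.
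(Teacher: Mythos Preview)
Your overall strategy --- apply Edwards--Kirby downstairs on the compact base and lift sheet by sheet via the isometries $\pi_\lambda$ --- is exactly the paper's. But two technical gaps break the argument as written.

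First, $U$ is an \emph{open} subset of $N$, not compact, so you cannot take $U$ as the domain of the conjugated embeddings. Near $\mathrm{Fr}\,V_\lambda$ there is no guarantee that $h(V_\lambda)\subset V_\lambda$ (or even $\subset\pi^{-1}(U)$), so $\pi_\lambda h\pi_\lambda^{-1}|_U$ need not be a map $U\to U$ and your lift is undefined. The paper repairs this by inserting a compact $L$ with $K\subset\mathrm{Int}_N L\subset L\subset U$, applying Theorem~\ref{thm_basic_deform} to $\E_\#(L,U;E)$, and working with $Q_i=(\pi|_{V_i})^{-1}(L)$; then $C_\delta(Q_i)\subset V_i$ by Lemma~\ref{lemma_covering_proj}(2), so $h(Q_i)\subset V_i$ for $d(h,i_W)<\delta$ and the conjugation $\theta_i:\E_\#(Q_i,V_i)\cong\E_\#(L,U)$ is an honest isometry.

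Second, and more seriously, your Arzel\`a--Ascoli step is aimed at the wrong family. You assert that $\U$ ``may be taken with compact closure'', making $\{\psi_t(g)^{-1}:g\in\U,\,t\in[0,1]\}$ equi-continuous. But a sup-metric ball about $i_L$ in $\E_\#(L,U)$ is never equi-continuous (embeddings $\gamma$-close to the identity can have arbitrarily steep local behaviour), hence never relatively compact. The correct move is \emph{pointwise in $h$}: for a fixed $h\in\W$, the family $\mathcal{C}(h)=\{h_i\}_{i\in\Lambda}$ of projected restrictions \emph{is} equi-continuous, because $h$ itself is uniformly continuous and the $\theta_i$ are isometries; Arzel\`a--Ascoli then gives $cl\,\mathcal{C}(h)$ compact in $\E_\#(L,U)$. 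Now $\psi(cl\,\mathcal{C}(h)\times[0,1])$ is compact, hence equi-uniform (using $\chi_t(f)=\psi_t(f)^{-1}f$ for the inverses), which proves $\phi_t(h)\in\E^u_\#$. For continuity of $\phi$ at $(h,t)$, one again uses compactness of $cl\,\mathcal{C}(h)$ to get \emph{uniform} continuity of $\psi$ on $cl\,\mathcal{C}(h)\times[0,1]$, and then Lemma~\ref{lemma_unif_nbd} --- this is its actual role --- to extend that $(\eta,\e)$-estimate to an open neighborhood $\mathcal{O}$ of $cl\,\mathcal{C}(h)$ in $\U$; every $k$ sup-close to $h$ has all its $k_i$ inside $\mathcal{O}$, which gives a single estimate valid across the infinitely many sheets. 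Your sketch does not supply this mechanism.
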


\begin{proof} 
Since $N$ is compact, by Lemma~\ref{lemma_covering_proj}\,(1) there exists $\lambda > 0$ such that each fiber of $\pi$ is $\lambda$-discrete. 
Choose a compact subset $L$ of $U$ such that $K \subset {\rm Int}_N\, L$ and set $Q = \pi^{-1}(L) \cap V$. 
We can find $\delta \in (0, \lambda/2)$ such that $C_{\delta}(L) \subset U$ and $C_{\delta}(K) \subset {\rm Int}_N\, L$.
Let $\{ V_i \}_{i \in \Lambda}$ be the collection of connected components of $V$   
and set 
$$(Q_i, P_i, X_i) = (Q, P, X) \cap V_i$$ 
for each $i \in \Lambda$. 
Then the restriction $\pi_i := \pi|_{V_i} : (V_i, d) \to (U, \rho)$ is an isometry 
and $C_{\delta}(Q_i) \subset V_i$ since 
$d(M - V_i, Q_i) \geq \min \{ \lambda/2, \rho(N - U, L) \} > \delta$ by Lemma~\ref{lemma_covering_proj}(2).

By Deformation Theorem~\ref{thm_basic_deform} and Complement to Theorem~\ref{thm_basic_deform} (with replacing $(M, U)$ by $(U, L)$) 
there exists a neighborhood $\U$ of $i_L$ in ${\mathcal E}_\#(L, U; E)$ 
and a homotopy 
$\psi : \U \times [0, 1] \lra {\mathcal E}_\#(L, U; D)$ \  such that 
\begin{itemize}
\item[{\rm (1)}] for each $f \in {\cal U}$ \ 
\begin{tabular}[t]{c@{\ \,}l}
{\rm (i)} & $\psi_0(f) = f$, \hspace{3mm} {\rm (ii)} 
$\psi_1(f)|_C = i_C$, \hspace{3mm} {\rm (iii)} 
$\psi_t(f) = f$ on $L - K$ $(t \in [0,1])$, \\[2mm] 
{\rm (iv)} & if $f = \id$ on $L \cap \partial N$, then $\psi_t(f) = \id$ on $L \cap \partial N$ $(t \in [0,1])$, 
\end{tabular}
\vskip 1.5mm 
\item[{\rm (2)}] $\psi_t(i_L) = i_L$ $(t \in [0,1])$. 
\end{itemize}
We may assume that ${\cal U} = {\mathcal E}_\#(i_L, \gamma; L, U; E)$ (the closed $\gamma$-neighborhood of $i_L$ in $\E_\#(L, U; E)$) for some $\gamma \in (0, \delta)$. 
  
For each $i \in \Lambda$ 
we obtain the isometry with respect to the sup-metrics 
$$\theta_i : \E_\#(Q_i, V_i) \cong \E_\#(L, U) : \hspace{3mm} \theta_i(f) = \pi_i f \pi_i^{-1},$$
which restricts to the isometries 
$$\theta_i' : \E_\#(Q_i, V_i; Y) \cong \E_\#(L, U; E) \hspace{5mm} and \hspace{5mm} 
\theta_i'' : \E_\#(Q_i, V_i; Z) \cong \E_\#(L, U; D).$$ 
Then, ${\cal W}_i \equiv (\theta_i')^{-1}({\cal U}) = {\mathcal E}_\#(i_{Q_i}, \gamma; Q_i, V_i; Y)$ 
%(the $\e$-neighborhood of $i_{P_i}$ in $\E_\#(P_i, V_i; Y)$) 
and the homotopy $\psi$ induces the corresponding homotopy 
$$\phi^i : {\cal W}_i \times [0,1] \to \E_\#(Q_i, V_i; Z),$$
which satisfies the following conditions
\begin{itemize}
\item[{\rm (3)}] for each $f \in {\cal W}_i$ \\
\begin{tabular}[t]{c@{\ \,}l}
{\rm (i)} & $\varphi^i_0(f) = f$, \hspace{3mm} {\rm (ii)} 
$\varphi^i_1(f) = \id$ \ on \ $X_i$, \hspace{3mm} {\rm (iii)} 
$\varphi^i_t(f) = f$ \ on \ $Q_i - P_i$ \ $(t \in [0,1])$, \\[2mm] 
{\rm (iv)} & if $f = \id$ on $Q_i \cap \partial M$, then $\varphi^i_t(f) = \id$ on $Q_i \cap \partial M$ $(t \in [0,1])$, 
\end{tabular}
\vskip 1.5mm 
\item[{\rm (4)}] $\varphi^i_t(i_{Q_i}) = i_{Q_i}$ \ $(t \in [0,1])$. 
\end{itemize}
 
Let ${\cal W} = \E^u_\#(i_W, \gamma; W, M; Y)$ and define a homotopy $\phi : {\cal W} \times [0,1] \lra \E^u_\#(W, M; Z)$ as follows. 
Take any $h \in {\cal W}$. Since $\gamma < \delta$,
for any $i \in \Lambda$ we have $h(Q_i) \subset C_{\delta}(Q_i) \subset V_i$ and 
$h|_{Q_i} \in {\cal W}_i$. 
Therefore we can define $\phi_t(h)$ $(t \in [0,1])$ by 
$$\phi_t(h)|_{Q_i} = \phi^i_t(h|_{Q_i}) \hspace{3mm} (i \in \Lambda) \hspace{5mm} 
\text{and} \hspace{5mm} \phi_t(h) = h \ \ \text{on} \ \ W - P.$$ 
 
Since $\phi^i_t(h|_{Q_i}) = h$ on $Q_i - P_i$, the map $\phi_t(h)$ is a well-defined embedding 
and the required conditions (1), (2) for $\phi$ follow from the corresponding conditions (3), (4) for $\phi^i$ $(i \in \Lambda)$. 
For (1)(iv) note that $\phi_t(h)(Q_i) = \phi^i_t(h|_{Q_i})(Q_i) = h(Q_i)$ $(i \in \Lambda)$. 
It remains to show that 
\begin{itemize}
\item[] $(\ast)_1$ $\phi_t(h)$ is a uniform embedding for any $h \in {\cal W}$ and $t \in [0,1]$ 
\hspace{1mm} and 
\hspace{1mm} $(\ast)_2$ $\phi$ is continuous. 
\end{itemize}

Take any $h \in {\cal W}$. For each $i \in \Lambda$ let $h_i = \theta_i'(h|_{Q_i})$. 
Since $h$ is a uniform embedding, 
the family $h|_{Q_i} \in {\cal W}_i$ ($i \in \Lambda$) is an equi-uniform family of embeddings.  
Therefore, the families ${\cal C}(h) = \{ h_i \}_{i \in \Lambda}$ and ${\cal C}'(h) = \{ h_i^{-1} \}_{i \in \Lambda}$ are also equi-continuous. Since ${\rm Im}\,h_i \subset C_{\delta}(L) \subset U$ $(i \in \Lambda)$ and $C_{\delta}(L)$ is compact,  
by the Arzela-Ascoli theorem (\cite[Theorem 6.4]{Du}) the closure $cl\,{\cal C}(h)$ of ${\cal C}(h)$ in ${\cal C}(L, U)$ is compact. 
It also follows that $cl\,{\cal C}(h) \subset {\cal U} \subset \E_\#(L, U; E)$ 
by Lemma~\ref{lemma_equi-conti} and the equi-continuity of ${\cal C}'(h)$.

Now we show that 
$\psi_t(h_i) \in \E_\#(L, U; D)$ \ $(i \in \Lambda, t \in [0,1])$ \ is an equi-uniform family of embeddings.   
Since $\psi(cl\,{\cal C}(h) \times [0,1]) \subset \E_\#(L, U; D)$ is compact, 
the family $\psi_t(h_i)$ $(i \in \Lambda, t \in [0,1])$ is equi-continuous.  
The equi-continuity of the family $(\psi_t(h_i))^{-1}$ $(i \in \Lambda, t \in [0,1])$ is shown as follows.  Since ${\rm Im}\,\psi_t(f) = {\rm Im}\,f$ for each $(f,t) \in {\cal U} \times [0,1]$, 
we have the map  
$$\mbox{$\chi : {\cal U} \times [0,1] \lra {\cal H}(L)$ : $\chi_t(f) = (\psi_t(f))^{-1}f$.}$$ 
Since $\chi(cl\,{\cal C}(h) \times [0,1])$ is compact, 
the family $\chi_t(h_i)$ $(i \in \Lambda, t \in [0,1])$ is equi-continuous. 
Since ${\cal C}'(h) = \{ h_i^{-1} \}_{i \in \Lambda}$ is equi-continuous, 
the family 
$$(\psi_t(h_i))^{-1} = \chi_t(h_i) h_i^{-1} \ \ (i \in \Lambda, t \in [0,1])$$ 
is also equi-continuous as desired. 

$(\ast)_1$ 
Pulling back each map $\psi_t(h_i)$ by the isometry $\theta_i''$, 
it follows that $\phi^i_t(h|_{Q_i})$ $(i \in \Lambda, t \in [0,1])$ is an equi-uniform family of embeddings.
The map $\phi_t(h)$ is uniformly continuous, 
since $\phi_t(h)|_{W-P} = h|_{W-P}$ is uniformly continuous, 
the family $\phi_t(h)|_{Q_i} = \phi^i_t(h|_{Q_i})$ $(i \in \Lambda)$ is equi-continuous 
and $C_\delta(P_i) \subset Q_i$ $(i \in \Lambda)$. 
Similarly the map $\phi_t(h)^{-1}h$ is uniformly continuous, 
since $\phi_t(h)^{-1}h = \id$ on $W-P$, 
the family $\phi_t(h)^{-1}h|_{Q_i} = \phi^i_t(h|_{Q_i})^{-1}h|_{Q_i}$ $(i \in \Lambda)$ is equi-continuous 
and $C_\delta(P_i) \subset Q_i$ $(i \in \Lambda)$. 
Therefore, $\phi_t(h)^{-1} = (\phi_t(h)^{-1}h)h^{-1}$ is also uniformly continuous. 

$(\ast)_2$ To see the continuity of the homotopy $\phi$, 
take any $(h,t) \in {\cal W} \times [0,1]$ and $\e > 0$. 
Since $cl\,{\cal C}(h)$ is compact, 
the homotopy 
$$\psi : {\cal U} \times [0,1] \lra 
\E_\#(L, U; D)$$
is uniformly continuous on $cl\,{\cal C}(h) \times [0,1]$. 
Hence there exists $\eta \in (0, \e)$ such that 
\begin{itemize}
\item[] 
if $(f, u), (g, v) \in cl\,{\cal C}(h) \times [0,1]$ and $\rho(f,g) \leq \eta$, $|u-v| \leq \eta$, then $\rho(\psi_u(f),\psi_v(g)) < \e$.  
\end{itemize}
By Lemma~\ref{lemma_unif_nbd} we can find a neighborhood ${\cal O}$ of $cl\,{\cal C}(h)$ in 
${\cal U}$ such that 
\begin{itemize}
\item[] if $(f, u), (g, v) \in {\cal O} \times [0,1]$ and $\rho(f,g) \leq \eta$, $|u-v| \leq \eta$, then $\rho(\psi_u(f),\psi_v(g)) < \e$. 
\end{itemize} 
Choose $\zeta \in (0, \eta)$ such that 
${\cal O}$ contains the open $\zeta$-neighborhood ${\cal O}(\zeta)$ of $cl\,{\cal C}(h)$ in ${\cal U}$. 
Then it follows that  
\begin{itemize}
\item[] if $(k,s) \in {\cal W} \times [0,1]$ and $d(h,k) < \zeta$, $|t-s| < \zeta$, then 
$d(\phi_t(h),\phi_s(k)) \leq \e$. 
\end{itemize} 
In fact, take any $(k,s) \in {\cal W} \times [0,1]$ with $d(h,k) < \zeta$ and $|t-s| < \zeta$. 
Then, for any $i \in \Lambda$, 
we have $\rho(h_i, k_i) = d(h|_{Q_i}, k|_{Q_i}) < \zeta$ and $h_i \in {\cal C}(h)$,  
so that $h_i, k_i \in {\cal O}(\zeta) \subset {\cal O}$. 
Hence, by the choice of ${\cal O}$ and $\eta$ 
we have $\rho(\psi_t(h_i), \psi_s(k_i)) < \e$ and so 
$d(\phi^i_t(h|_{Q_i}), \phi^i_s(k|_{Q_i})) < \e$. 
This implies $d(\phi_t(h),\phi_s(k)) \leq \e$. 
This completes the proof. 
\end{proof}

The next lemma deals with the problem on the pattern of intersection of each sheet of $V$ with $Y$ 
(which is represented by a set $I_{V_0}$ defined in the proof). 
Here, $V$ represents the subset of $W$ on which we shall deform the uniform embeddings, 
while $Y$ represents the subset on which the uniform embeddings have already been deformed to the identity. 
In Lemma~\ref{lem_1} the pattern is same for all sheets of $V$ (corresponding to the inverse image of a subset of $N$), 
while in Lemma~\ref{lem_2} appear finitely many patterns of intersections 
(relating to the inverse images of finitely many subsets of $N$) and   
Lemma~\ref{lem_1} is applied to each pattern separately. 

When $O_0$ is a connected open subset of $N$ isometrically evenly covered by $\pi$, 
let ${\cal S}(O_0)$ denote the collection of connected components of $\pi^{-1}(O_0)$. 
 For each $O \in {\cal S}(O_0)$ the restriction $\pi|_O : O \to O_0$ is an isometry. 
For subsets $A_0, B_0 \subset O_0$, let 
$${\cal S}(O_0, A_0, B_0) = \{ (O, A, B) \mid O \in {\cal S}(O_0), A = (\pi|_O)^{-1}(A_0), B = (\pi|_O)^{-1}(B_0)\}.$$ 

%The next lemma deals with the pattern of intersection of each sheet of $V$ with $Y$.  
%In Lemma~\ref{lem_1} the pattern $Y \cap V$ is the inverse image in $V$ of a subset of $N$ so that 
%the pattern of intersection of each sheet of $V$ with $Y$ is same for all sheets of $V$, 
%while in Lemma~\ref{lem_2} the subset $Y$ is the union of some sort of subsets of the inverse images in $V$ of finitely many subsets of $N$ and 
%the pattern of intersection of each sheet of $V$ with $Y$ is different for .  

We keep the notations given in Notation~\ref{notation-1}. 

\begin{lemma}\label{lem_2}
Suppose $\{ (O_j, E_j, D_j) \}_{j\in J}$ is a finite family of subsets of $N$ such that 
(a) for each $j \in J$, $O_j$ is a connected open subset of $N$ and $E_j, D_j$ are closed subsets of $N$ with 
$D_j \subset {\rm Int}_N E_j$ and $E_j \subset O_j$ and 
(b) $O_j$ $(j \in J)$ and ${\rm St}(U, \{ O_j \}_{j \in J})$ are isometrically evenly covered by $\pi$.  
Suppose ${\cal F}$ is a subcollection of $\cup_{j \in J} \{ j \} \times {\cal S}(O_j, E_j, D_j)$ and 
let 
$$Y = \cup \{ E \mid (j, (O, E, D)) \in {\cal F} \} \hspace{5mm} \text{and} \hspace{5mm}   
Z = \cup \{ D \mid (j, (O, E, D)) \in {\cal F} \}.$$ 
Then there exists a neighborhood $\W$ of the inclusion map $i_W$ in $\E^u_\#(W, M; Y)$ and 
a homotopy $\phi : \W \times [0,1] \lra \E^u_\#(W, M; Z)$ such that 
\begin{itemize} 
\item[(1)] for each $h \in \W$ \\ 
\begin{tabular}[t]{c@{\ \,}l}
{\rm (i)} & $\phi_0(h) = h$, \hspace{3mm} 
{\rm (ii)} $\phi_1(h) = \id$ on $X$, \\[2mm] 
{\rm (iii)}  & $\phi_t(h) = h$ \ on \ $W - P$ \ \ and \ \ $\phi_t(h)(W) = h(W)$ \ \ $(t \in [0,1])$, \\[2mm] 
{\rm (iv)} & if $h = \id$ on $W \cap \partial M$, then $\varphi_t(h) = \id$ on $W \cap \partial M$ $(t \in [0,1])$, 
\end{tabular} 
\vskip 1.5mm 
\item[(2)] $\phi_t(i_W) = i_W$ \ $(t \in [0,1])$.
\end{itemize} 
\end{lemma}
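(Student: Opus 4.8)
The plan is to reduce this statement to Lemma~\ref{lem_1} by partitioning $\V$ into finitely many subcollections, each of which meets $Y$ and $Z$ in a single ``pattern'' pulled back from a pair of closed subsets of $N$, and then applying Lemma~\ref{lem_1} to each subcollection separately.

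First I would set up the patterns. Put $J_0 = \{ j \in J : O_j \cap U \neq \emptyset \}$ and $S_0 = {\rm St}(U, \{ O_j \}_{j \in J}) = U \cup \bigcup_{j \in J_0} O_j$, which by (b) is a connected open subset of $N$ that is isometrically evenly covered by $\pi$. For each connected component $V_0$ of $\pi^{-1}(U)$ there is a unique component $\widehat{S}(V_0)$ of $\pi^{-1}(S_0)$ with $V_0 \subset \widehat{S}(V_0)$, and $\pi|_{\widehat{S}(V_0)} : \widehat{S}(V_0) \to S_0$ is an isometry; for $j \in J_0$ let $(O_j^{V_0}, E_j^{V_0}, D_j^{V_0}) \in {\cal S}(O_j, E_j, D_j)$ denote the pull-back of $(O_j, E_j, D_j)$ under this isometry. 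Define the \emph{pattern} of $V_0$ by
$$I_{V_0} \ = \ \big\{\, j \in J_0 \ :\ (j, (O_j^{V_0}, E_j^{V_0}, D_j^{V_0})) \in \F \,\big\} \ \subset \ J_0 .$$
The crucial observation is that if $(j, (O, E, D)) \in \F$ and $E \cap V_0 \neq \emptyset$, then $j \in J_0$ and $(O, E, D) = (O_j^{V_0}, E_j^{V_0}, D_j^{V_0})$, and hence $j \in I_{V_0}$: from $\emptyset \neq E \cap V_0 \subset O \cap V_0$ one gets $O_j \cap U \neq \emptyset$, so $j \in J_0$, and then $O$ is a connected subset of $\pi^{-1}(O_j) \subset \pi^{-1}(S_0)$ meeting $V_0 \subset \widehat{S}(V_0)$, hence $O \subset \widehat{S}(V_0)$, and injectivity of $\pi$ on $\widehat{S}(V_0)$ identifies $(O, E, D)$ with $(O_j^{V_0}, E_j^{V_0}, D_j^{V_0})$. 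Since $E_j, D_j \subset O_j \subset S_0$ for $j \in J_0$, we also have $\pi^{-1}(E_j) \cap V_0 = E_j^{V_0} \cap V_0$ and $\pi^{-1}(D_j) \cap V_0 = D_j^{V_0} \cap V_0$. Combining these facts, for every component $V_0$ of $\pi^{-1}(U)$ one obtains
$$Y \cap V_0 = \pi^{-1}\big(E(I_{V_0})\big) \cap V_0, \qquad Z \cap V_0 = \pi^{-1}\big(D(I_{V_0})\big) \cap V_0,$$
where $E(I) = \bigcup_{j \in I} E_j$ and $D(I) = \bigcup_{j \in I} D_j$ for $I \subset J_0$; these are closed in $N$ and $D(I) \subset \bigcup_{j \in I} {\rm Int}_N E_j \subset {\rm Int}_N E(I)$.

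Next I would apply Lemma~\ref{lem_1} pattern by pattern. For $I \subset J_0$ set $\V^{(I)} = \{ V_0 \in \V : I_{V_0} = I \}$, $V^{(I)} = \cup\,\V^{(I)}$, $X^{(I)} = X \cap V^{(I)}$, $P^{(I)} = P \cap V^{(I)}$. Only finitely many patterns $I$ occur; the $V^{(I)}$ partition $V$ and the $P^{(I)}$ are mutually disjoint. For each occurring $I$, the hypotheses of Notation~\ref{notation-1} hold with $\V^{(I)}, V^{(I)}, X^{(I)}, P^{(I)}$ in place of $\V, V, X, P$, and by the identities just displayed Lemma~\ref{lem_1} applies with $E = E(I)$ and $D = D(I)$; it produces $\gamma_I > 0$ and a homotopy $\phi^{(I)} : \E^u_\#(i_W, \gamma_I; W, M; Y) \times [0,1] \lra \E^u_\#(W, M; Z)$ with the conclusions of Lemma~\ref{lem_1} relative to $X^{(I)}, P^{(I)}, V^{(I)}$. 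Put $\gamma = \min_I \gamma_I$ and $\W = \E^u_\#(i_W, \gamma; W, M; Y)$, and define $\phi : \W \times [0,1] \lra \E^u_\#(W, M; Z)$ by $\phi_t(h) = \phi^{(I)}_t(h)$ on $V^{(I)}$ for each occurring $I$ and $\phi_t(h) = h$ on $W - P$; this is consistent because $\phi^{(I)}_t(h) = h$ on $W - P^{(I)} \supset V^{(I)} - P^{(I)}$ and the $P^{(I)}$ are mutually disjoint.

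It then remains to verify that $\phi$ has the required properties. That $\phi_t(h)$ is a well-defined uniform embedding lying in $\E^u_\#(W, M; Z)$, and that $\phi$ is continuous, follow from the corresponding assertions for the finitely many $\phi^{(I)}$ together with the fact that the $\phi^{(I)}$ are supported on the mutually disjoint sets $P^{(I)}$; the verifications are entirely analogous to, and simpler than, the arguments $(\ast)_1$ and $(\ast)_2$ in the proof of Lemma~\ref{lem_1}. Conditions (1)(i), (1)(iv) and (2) are inherited piecewise. For (1)(ii) note $X = \bigsqcup_I X^{(I)}$ with $X^{(I)} \subset P^{(I)}$, so $\phi_1(h) = \phi^{(I)}_1(h) = \id$ on each $X^{(I)}$. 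For (1)(iii), $\phi_t(h) = h$ off $P$ by construction, and since $\phi^{(I)}_t(h)(V^{(I)}) = h(V^{(I)})$ and $\phi^{(I)}_t(h) = h$ on $V^{(I)} - P^{(I)}$ we get $\phi^{(I)}_t(h)(P^{(I)}) = h(P^{(I)})$, whence $\phi_t(h)(W) = h(W - P) \cup \bigcup_I h(P^{(I)}) = h(W)$. The main obstacle will be the pattern construction of the second paragraph: one has to isolate the correct notion of pattern and use hypothesis (b) to transport it coherently between the sheets of $\pi^{-1}(U)$, so that each class is precisely the pull-back of one pair $(E(I), D(I))$ of closed subsets of $N$ to which Lemma~\ref{lem_1} can be applied. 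Once that is in place, the reduction and the subsequent bookkeeping are routine.
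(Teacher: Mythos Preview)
Your proposal is correct and follows essentially the same approach as the paper: partition $\V$ according to the ``pattern'' $I_{V_0}$, verify that on each class $V^{(I)}$ the sets $Y, Z$ restrict to $\pi^{-1}(E(I)), \pi^{-1}(D(I))$, apply Lemma~\ref{lem_1} to each class, and glue. The only cosmetic difference is that the paper defines $I_{V_0} = \{ j \in J \mid \exists\, (j,(O,E,D)) \in \F \ \text{with} \ E \cap V_0 \neq \emptyset \}$ and verifies the key identities $Y \cap V_I = \pi^{-1}(E_I) \cap V_I$, $Z \cap V_I = \pi^{-1}(D_I) \cap V_I$ by a direct two-inclusion argument, whereas you encode the pattern via the specific sheet $(O_j^{V_0}, E_j^{V_0}, D_j^{V_0})$ inside $\widehat{S}(V_0)$ and derive the same identities from your ``crucial observation''; the resulting partitions and the remainder of the argument coincide.
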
 

\begin{proof} 
For each $V_0 \in {\cal V}$
consider the subset $I_{V_0}$ of $J$ defined by 
$$I_{V_0} = \{ j \in J \mid \exists \, (j, (O, E, D)) \in {\cal F} \ \text{such that} \ E \cap V_0 \neq \emptyset \}.$$ 
For each $I \subset J$ let 
$$\begin{array}[t]{l}
{\cal V}_I = \{ V_0 \in {\cal V} \mid I_{V_0} = I \}, \ \ 
V_I \equiv \cup \,{\cal V}_I, \ \ X_I = \pi^{-1}(C) \cap V_I, \ \ P_I = \pi^{-1}(K) \cap V_I \\[2mm]  
E_I = \cup_{i \in I} E_i \ \ \text{and} \ \ D_I = \cup_{i \in I} D_i.
\end{array}$$ 
\vskip 1mm 
\noindent It follows that $V_I \subset V \subset W$ and $D_I \subset {\rm Int}_N E_I$. 
We show that 
$${\rm (i)} \ \ Y \cap V_I = \pi^{-1}(E_I) \cap V_I \hspace{5mm} \text{and} \hspace{5mm} {\rm (ii)} \ \ Z \cap V_I = \pi^{-1}(D_I) \cap V_I.$$
%Since $$\mbox{$V_I \subset V \subset W$, \ \ $D_I \subset {\rm Int}_N E_I$, \ \ 
%$Y \cap V_I = \pi^{-1}(E_I) \cap V_I$ \ \ and \ \ $Z \cap V_I = \pi^{-1}(D_I) \cap V_I$,}$$  

(i) Given $x \in Y \cap V_I$. 
Since $x \in Y$, it follows that 
$x \in E$ for some $(i, (O,E,D)) \in {\cal F}$, thus $(O,E,D) \in {\cal S}(O_i,E_i,D_i)$ and $E = (\pi|_O)^{-1}(E_i)$, so 
$\pi(x) \in E_i$. 
Since $x \in V_I$, it is seen that $x \in V_0$ for some $V_0 \in {\cal V}_I$ and $I_{V_0} = I$. 
%It follows that $x \in E$ for some $(i, (O,E,D)) \in {\cal F}$ and $x \in V_0$ for some $V_0 \in {\cal V}_I$, 
%which means that $I_{V_0} = I$. 
Since $x \in E \cap V_0 \neq \emptyset$, we have $i \in I_{V_0} = I$ and $E_i \subset E_I$. 
Hence $\pi(x) \in E_I$ and $x \in \pi^{-1}(E_I) \cap V_I$. 

Conversely suppose $x \in \pi^{-1}(E_I) \cap V_I$. 
Then $\pi(x) \in E_I$, thus $\pi(x) \in E_i$ for some $i \in I$. 
Since $x \in V_I$, it follows that $x \in V_0$ for some $V_0 \in {\cal V}_I$ and $I_{V_0} = I \ni i$ so that 
$E \cap V_0 \neq \emptyset$ for some $(i, (O,E,D)) \in {\cal F}$, so $(O,E,D) \in {\cal S}(O_i,E_i,D_i)$ and $E \subset Y$.  
By the assumption $\widetilde{U} \equiv {\rm St}(U, \{ O_j \}_{j \in J})$ is a connected open subset of $N$ isometrically evenly covered by $\pi$. 
Since $\pi(x) \in U \cap E_i \subset U \cap O_i$, we have $O_i \subset \widetilde{U}$, so $V_0, O \subset \pi^{-1}(\widetilde{U})$. 
Since $V_0$ and $O$ are connected and  $V_0 \cap O \supset V_0 \cap E \neq \emptyset$, 
there exists $\widetilde{U}_0 \in {\cal S}(\widetilde{U})$ with $V_0, O \subset \widetilde{U}_0$.
Since $\pi : \widetilde{U}_0 \to \widetilde{U}$ is an isometry, 
$E \subset O \subset \widetilde{U}_0$, $x \in V_0 \subset \widetilde{U}_0$ and 
$\pi(x) \in E_i \subset O_i \subset \widetilde{U}$ 
it follows that $x \in E \subset Y$ so $x \in Y \cap V_I$ as desired. 

The assertion (ii) follows from the same argument as (i). 

By Lemma~\ref{lem_1} there exists 
a neighborhood $\W_I$ of $i_W$ in $\E^u_\#(W, M; Y)$ and 
a homotopy $\phi^I : \W_I \times [0,1] \lra \E^u_\#(W, M; Z)$ such that 
\begin{itemize} 
\item[(1)] for each $h \in \W_I$ \\ 
\begin{tabular}[t]{c@{\ \,}l}
{\rm (i)} & $\phi^I_0(h) = h$, \hspace{3mm} 
{\rm (ii)} $\phi^I_1(h) = \id$ on $X_I$, \hspace{3mm} 
{\rm (iii)} $\phi^I_t(h) = h$ on $W - P_I$ \ $(t \in [0,1])$, \\[2mm] 
{\rm (iv)} & $\phi^I_t(h)(P_I) \subset V_I$ \  and \ $\phi^I_t(h)(V_I) = h(V_I)$ \ $(t \in [0,1])$, \\[2mm] 
{\rm (v)} & if $h = \id$ on $W \cap \partial M$, then $\phi^I_t(h) = \id$ on $W \cap \partial M$ $(t \in [0,1])$, 
\end{tabular}
\vskip 1.5mm 
\item[(2)] $\phi^I_t(i_W) = i_W$ \ $(t \in [0,1])$.
\end{itemize} 

Since ${\cal V}$ is the disjoint union of the subcollections ${\cal V}_I$ $(I \subset J)$, 
it follows that $V$ is the disjoint union of $V_I$ $(I \subset J)$.  
Then ${\cal W} = \cap_{I \subset J} {\cal W}_I$ is a neighborhood of $i_W$ in $\E^u_\#(W, M; Y)$ and we can define 
a homotopy $\phi : \W \times [0,1] \lra \E^u_\#(W, M; Z)$ by  
$$\mbox{$\phi_t(h) = \phi^I_t(h)$ \ \ on \ \ $V_I$ \hspace{3mm} and \hspace{3mm} $\phi_t(h) = h$ \ \ on \ \ $W - P$.}$$  
Since there exists $\gamma > 0$ such that $C_\gamma(P_I) \subset V_I$ $(I \subset J)$, 
the uniform continuity of $\phi_t(h)$ follows from those of the maps $h$ and $\phi^I_t(h)$ $(I \subset J)$. 
Similarly, the map $\phi_t(h)^{-1}h$ is uniformly continuous since $\phi_t(h)^{-1}h = \id$ on $W - P$ and 
the maps $\phi^I_t(h)^{-1}h$ $(I \subset J)$ are uniformly continuous. 
Thus $\phi_t(h)^{-1} = (\phi_t(h)^{-1}h)h^{-1}$ is also uniformly continuous. 
\end{proof} 

\subsection{General case} \mbox{} 

Theorem~\ref{thm_local_deformation} is easily deduced from Theorem~\ref{thm_local_deformation-2}, 
whose proof is based upon a recursive application of Lemma~\ref{lem_2} to a finite family of local trivializations of the metric covering projection $\pi$.  
Here, the key is to set up the correct data to which Lemma~\ref{lem_2} is applied.  
%In each inductive step, the key point is to define correctly the collection ${\cal F}$ to which Lemma~\ref{lem_2} is applied.  
%which is represented by the definition of the collection ${\cal F}_i$ appeared in the proof.
%The key point is to grasp ``the patterns of intersection'' correctly in each step, 
%which is represented by the definition of the collection ${\cal F}_i$ appeared in the proof.
%proceeds by an inductive argument with respect to a finite family of local trivializations of the metric covering projection $\pi$. 

\begin{theorem}\label{thm_local_deformation-2} 
Suppose $\pi : (M, d) \to (N, \rho)$ is a metric covering projection, $N$ is a compact topological $n$-manifold possibly with boundary,  
$X$ is a closed subset of $M$, $W' \subset W$ are uniform neighborhoods of $X$ in $(M, d)$ and 
$Z$, $Y$ are closed subsets of $M$ such that $Y$ is a uniform neighborhood of $Z$. 
Then there exists a neighborhood $\W$ of the inclusion map $i_W : W \subset M$ in $\E^u_\#(W, M; Y)$ and 
a homotopy $\phi : \W \times [0,1] \lra \E^u_\#(W, M; Z)$ such that 
\begin{itemize} 
\item[(1)] for each $h \in \W$ \\ 
\begin{tabular}[t]{c@{\ \,}l}
{\rm (i)} & $\phi_0(h) = h$, \hspace{3mm} 
{\rm (ii)} $\phi_1(h) = \id$ \ on \ $X$, \\[2mm] 
{\rm (iii)} & $\phi_t(h) = h$ \ on \ $W - W'$ \ \ and \ \ $\phi_t(h)(W) = h(W)$ \ \ $(t \in [0,1])$, \\[2mm] 
{\rm (iv)} & if $h = \id$ on $W \cap \partial M$, then $\phi_t(h) = \id$ on $W \cap \partial M$ $(t \in [0,1])$, 
\end{tabular} 
\vskip 1.5mm 
\item[(2)] $\phi_t(i_W) = i_W$ \ $(t \in [0,1])$.
\end{itemize} 
\end{theorem}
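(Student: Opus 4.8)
\emph{Overall strategy and set-up.} I would prove Theorem~\ref{thm_local_deformation-2} by a finite induction on the sheets of a small finite atlas of $N$, processing one chart at each step by means of Lemma~\ref{lem_2} and concatenating the resulting deformations; the substance of the argument is the choice of the data handed to Lemma~\ref{lem_2}. First fix $\delta_0,\delta_1>0$ with $C_{\delta_0}(X)\subset W'$ and $C_{\delta_1}(Z)\subset Y$, by Lemma~\ref{lemma_covering_proj}(1) a $\lambda>0$ making every fiber of $\pi$ $\lambda$-discrete, and by $(\natural)_1$ and compactness of $N$ a Lebesgue number $\e^\ast>0$ for an open cover of $N$ by isometrically evenly covered sets. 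Using compactness again, choose connected open sets $U_1,\dots,U_k$ covering $N$ with $\diam_N {\rm St}(U_j,\{U_\ell\}_\ell)<\min\{\delta_0,\tfrac{\delta_1}{4},\lambda,\e^\ast\}$ for every $j$; since any subset of an isometrically evenly covered set is again isometrically evenly covered, each $U_j$ and each $\widehat U_j:={\rm St}(U_j,\{U_\ell\}_\ell)$, which is connected and open, is isometrically evenly covered. For each $j$ fix, inside $U_j$, a nested chain of compact sets
$$K_j\supset C_j[j]\supset C_j^\ast[j{+}1]\supset C_j[j{+}1]\supset C_j^\ast[j{+}2]\supset\cdots\supset C_j^\ast[k]\supset C_j[k],$$
each contained in the interior of its predecessor, with $K_j$ a compact neighborhood of $C_j[j]$ and with $\{{\rm Int}_N C_j[k]\}_j$ still covering $N$ (possible since there are finitely many charts and finitely many levels, using a Lebesgue cushion for the cover).

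\emph{Engulfed versus free sheets.} Call a connected component $V_0$ of some $\pi^{-1}(U_j)$ \emph{engulfed} if $V_0\subset C_{\delta_1/2}(Z)$ and \emph{free} otherwise; since $\pi$ maps $V_0$ isometrically onto $U_j$ we have $\diam_d V_0<\tfrac{\delta_1}{4}$, which yields: (i) a component meeting $Z$ is engulfed; (ii) an engulfed and a free component are disjoint, since otherwise the free one would lie in $C_{3\delta_1/4}(Z)\subset Y$ and hence be engulfed. Let $\V_j$ be the set of free components of $\pi^{-1}(U_j)$ meeting $X$, put $V_j=\bigcup\V_j$ and $\Sigma=\bigcup_j V_j$; by the small diameter $V_j\subset C_{\delta_0}(X)\subset W'\subset W$. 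Let $G$ be the union of all engulfed components. From $\{{\rm Int}_N C_j[k]\}$ being a cover of $N$ one gets $Z\subset G$ and $X\subset G\cup\Sigma$, and $X\cap G\subset W\cap Y$; and by (ii) $G\cap\Sigma=\emptyset$. Hence any deformation supported in $\Sigma$ is automatically the identity on $Z$ — so it lands in $\E^u_\#(W,M;Z)$ — and on $X\cap G$, which is fixed by every $h\in\E^u_\#(W,M;Y)$; it therefore suffices to make the embedding the identity on $X\cap\Sigma$ by moving only inside $\Sigma$.

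\emph{The inductive step.} I would construct, for $j=0,1,\dots,k$, neighborhoods of $i_W$ and homotopies furnished by Lemma~\ref{lem_2}, keeping the invariant that after step $j$ the embedding is the identity on $\Gamma_j:=\bigcup_{i\le j}\big(\pi^{-1}(C_i[j])\cap V_i\big)$. At step $j{+}1$ apply Lemma~\ref{lem_2} with the data of Notation~\ref{notation-1} taken to be $U=U_{j+1}$, $C=C_{j+1}[j{+}1]$, $K=K_{j+1}$, $W=W$, $\V=\V_{j+1}$, together with the finite family of triples $\{(\widehat U_{j+1},C_i^\ast[j{+}1],C_i[j{+}1])\}$ ranging over the $i\le j$ with $U_i\cap U_{j+1}\ne\emptyset$, and with the subcollection $\mathcal F$ selecting, over each such $i$, exactly those components $\widehat V$ of $\pi^{-1}(\widehat U_{j+1})$ whose component over $U_{j+1}$ lies in $\V_{j+1}$ and whose component over $U_i$ lies in $\V_i$. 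Lemma~\ref{lem_2}'s hypotheses are met: $C_i[j{+}1]\subset{\rm Int}_NC_i^\ast[j{+}1]\subset C_i^\ast[j{+}1]\subset U_i\subset\widehat U_{j+1}$, and $\widehat U_{j+1}$ and ${\rm St}(U_{j+1},\{\widehat U_{j+1}\})=\widehat U_{j+1}$ are isometrically evenly covered. The crucial geometric fact — precisely what lets the ``union of whole sheets over fixed $N$-pieces'' form of Lemma~\ref{lem_2} apply — is that for a component $\widehat V$ of $\pi^{-1}(\widehat U_{j+1})$ the preimage $(\pi|_{\widehat V})^{-1}(U_i)$ is connected and isometric to $U_i$, hence is an entire component of $\pi^{-1}(U_i)$; so the set $Y$ produced by Lemma~\ref{lem_2} is a union of whole such components, each contained in $\Gamma_j$ and therefore fixed by $h$, while its $Z$ is the analogous union with $C_i[j{+}1]$ in place of $C_i^\ast[j{+}1]$. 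Together with the part of $\Gamma_j$ lying off the support $\pi^{-1}(K_{j+1})\cap V_{j+1}$, and with the newly pinned set $\pi^{-1}(C_{j+1}[j{+}1])\cap V_{j+1}$, this contains $\Gamma_{j+1}$, so the invariant persists. After step $k$, since $\{{\rm Int}_NC_i[k]\}$ covers $N$, the embedding is the identity on $\Gamma_k\supset X\cap\Sigma$ and on $X\cap G$, hence on all of $X$.

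\emph{Concatenation, and the main difficulty.} Each step's homotopy is continuous on a $\gamma_j$-neighborhood of $i_W$, fixes $i_W$, preserves images, and is the identity off $\pi^{-1}(K_j)\cap V_j\subset W'$, a set at positive distance from the complements of the relevant sheets; so a downward induction on $j$ produces a single neighborhood $\W$ of $i_W$ in $\E^u_\#(W,M;Y)$ through which all $k$ homotopies are defined, and running them on consecutive subintervals of $[0,1]$ gives the required $\phi:\W\times[0,1]\to\E^u_\#(W,M;Z)$. Properties (1)(i),(iii),(iv) and (2) pass through step by step — the supports all lie in $W'$ and images are preserved — and (1)(ii) is the identity-on-$X$ conclusion just obtained; that every $\phi_t(h)$ is a uniform embedding and that $\phi$ is continuous follow because a finite concatenation of the equi-uniform, boundedly supported deformations of Lemma~\ref{lem_2} is again equi-uniform, exactly as in the gluing steps inside the proofs of Lemmas~\ref{lem_1} and~\ref{lem_2}. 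I expect the delicate part to be entirely the design of the data: arranging the ``generations'' $C_i[\,\cdot\,]$, $C_i^\ast[\,\cdot\,]$ so that at each step the pair (pinned region before, pinned region after) literally has the form $(\bigcup E,\bigcup D)$ with $D\subset{\rm Int}_NE$ that Lemma~\ref{lem_2} demands — a chart's pinned set shrinks slightly every time it is reused as a buffer, so a chain of fixed length must be laid out in advance — and observing that shrinking the charts enough forces every component meeting $Z$ to be swallowed by $Y$, which is what permits the a priori unstructured closed set $Z$ to be carried through untouched.
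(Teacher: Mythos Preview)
Your approach is essentially the paper's: both proofs run a finite induction on a small chart cover of $N$, at each step applying Lemma~\ref{lem_2} with a carefully shrinking nested family of compacts so that the ``already pinned'' region from previous steps can serve as the pair $(\bigcup E,\bigcup D)$ with $D\subset{\rm Int}_N E$, then concatenate. The substantive difference is in how the original constraint set $Z$ is carried through. The paper folds $Z$ directly into the data of Lemma~\ref{lem_2} by including in each $\mathcal F_i$ the extra triples $(k,(O,E,D))$ with $E\cap Z\neq\emptyset$; the output of Lemma~\ref{lem_2} then lands in $\E^u_\#(W,M;Z_i)\subset\E^u_\#(W,M;Z)$ automatically. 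You instead separate engulfed sheets (those sitting inside $C_{\delta_1/2}(Z)$) from free ones and deform only in free sheets; since free sheets are disjoint from $Z$, the $Z$-constraint is preserved for free and Lemma~\ref{lem_2} is invoked only with ``previously pinned'' data. Both schemes work; yours is a bit more geometric and keeps the $\mathcal F$ in Lemma~\ref{lem_2} smaller, while the paper's is more uniform in that all constraints pass through the same mechanism.

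One point to tighten: your justification of claim (ii) (``an engulfed and a free component are disjoint'') does not go through as stated, since lying in $C_{3\delta_1/4}(Z)$ is not the same as being engulfed. Fortunately the argument does not actually need $G\cap\Sigma=\emptyset$: you only use $\Sigma\cap Z=\emptyset$ (immediate, since a component meeting $Z$ is engulfed) to keep $\phi_t(h)=\id$ on $W\cap Z$, and for $x\in X\cap G\cap\Sigma$ you simply fall back on the main argument for $X\cap\Sigma$. So either drop claim (ii) or redefine ``engulfed'' with a slightly larger radius and adjust the diameter bound accordingly.
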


\begin{proof} For $m \in \IN$ let $[m] = \{ 1, 2, \cdots, m \}$. 
Choose $\gamma > 0$ such that $C_\gamma(X) \subset W'$ and $C_\gamma(Z) \subset Y$. 
Since $N$ is compact, there exists a finite open cover ${\cal U} = \{ U_i \}_{i\in[m]}$ of $N$ such that for each $i \in [m]$ 
\begin{itemize}
\item[] $\diam U_i < \gamma$, \ 
$U_i$ is connected \ and \ ${\rm St}\,(U_i, \U)$ is isometrically evenly covered by $\pi$. 
\end{itemize} 
There exists a finite closed covering $\F = \{ F_i \}_{i\in[m]}$ of $N$ such that $F_i \subset U_i$ for each $i \in [m]$. 

By Lemma~\ref{lemma_covering_proj} there exists $\lambda > 0$ such that each fiber of $\pi$ is $\lambda$-discrete.
Choose $\delta \in (0, \lambda/2)$ such that $C_{\delta}(F_i) \subset U_i$ for each $i \in [m]$. 
Take real numbers
$$\delta > \delta_0 > \delta_1 > \cdots > \delta_m > 0.$$ 
For each $i \in [m]$ we apply Lemma~\ref{lem_2} to the following data: 
\vskip 2mm 
\begin{tabular}[t]{l}
$U = U_i \subset N$, \hspace{5mm} $(K_i, C_i) = (C_{\delta_{i-1}}(F_i), C_{\delta_{i}}(F_i))$, \hspace{5mm} $W \subset M$, \\[3mm] 
${\cal V}_i = \{ V' \in {\cal S}(U_i) \mid V' \cap X \neq \emptyset \}$, 
\hspace{5mm} $V_i = \cup \, {\cal V}_i$, 
\hspace{5mm} $X_i = \pi^{-1}(C_i) \cap V_i$, 
\hspace{5mm} $P_i = \pi^{-1}(K_i) \cap V_i$, \\[3mm] 
$(O_j, E_j^i, D_j^i) = (U_j, C_{\delta_{i-1}}(F_j), C_{\delta_{i}}(F_j))$ \ $(j \in [m])$, \\[3mm] 
${\cal F}_i = \{(k, (O, E, D)) \in \bigcup_{j \in [m]} \{ j \} \times {\cal S}(O_j, E_j^i, D_j^i) \mid 
\mbox{(a) $E \cap X \neq \emptyset$ and $k \leq i-1$ or (b) $E \cap Z \neq \emptyset$} \}.$ \\[3mm] 
$Y_i = \cup \{ E \mid (k, (O, E, D)) \in {\cal F}_i\}$ \hspace{5mm} and \hspace{5mm} 
$Z_i = \cup \{ D \mid (k, (O, E, D)) \in {\cal F}_i\}$. 
\end{tabular} \\[3mm] 
By the choice of $\gamma$ it is seen that $V_i \subset C_\gamma(X) \subset W' \subset W$. 
Thus we obtain a neighborhood $\W_i$ of $i_W$ in $\E^u_\#(W, M; Y_i)$ and 
a homotopy $\phi^i : \W_i \times [0,1] \lra \E^u_\#(W, M; Z_i)$ such that 
\begin{itemize} 
\item[(1)] for each $h \in \W_i$ \\ 
\begin{tabular}[t]{c@{\ \,}l}
{\rm (i)} & $\phi^i_0(h) = h$, \hspace{3mm} 
{\rm (ii)} $\phi^i_1(h) = \id$ \ on \ $X_i$, \\[2mm] 
{\rm (iii)} & $\phi^i_t(h) = h$ \ on \ $W - P_i$ \ \ and \ \ $\phi^i_t(h)(W) = h(W)$ \ \ $(t \in [0,1])$, \\[2mm] 
{\rm (iv)} & if $h = \id$ on $W \cap \partial M$, then $\phi_t^i(h) = \id$ on $W \cap \partial M$ $(t \in [0,1])$, 
\end{tabular} 
\vskip 1.5mm 
\item[(2)] $\phi^i_t(i_W) = i_W$ \ $(t \in [0,1])$.
\end{itemize} 

To compose these homotopies we use the following implications; 
\begin{itemize}
\item[(3)] $Y_{i+1} \subset Z_i \cup X_i$ \ \ $(i \in [m-1])$, \hspace{5mm}
\item[(4)] (i) $Z \subset Z_i$ \ \ $(i \in [m])$, \hspace{5mm} 
(ii) $X \subset X_m \cup Z_m$, \hspace{5mm} 
(iii) \,$Y_1 \subset Y$. %$X \cup Z \subset X_m \cup Z_m$, \hspace{5mm}  
%(iv) $Y_1 \subset Y$. 
\end{itemize}

We will verify these statements later and continue the construction of the required homotopy $\phi$. 
By (3) and (1)(ii) we have the maps $\phi^i_1 : \W_i \to \E^u_\#(W, M; Y_{i+1})$ $(i \in [m-1])$.
Since $\phi^i_1(i_W) = i_W \in \W_{i+1}$, 
by the backward induction the neighborhoods $\W_i$ $(i \in [m-1])$ can be replaced by smaller ones 
so to achieve the condition $\phi^i_1(\W_i) \subset \W_{i+1}$. 
Since $\E^u_\#(W, M; Y) \subset \E^u_\#(W, M; Y_1)$ by (4)(iii), 
there exists a neighborhood $\W$ of $i_W$ in $\E^u_\#(W, M; Y)$ such that $\W \subset \W_1$. 
Then we have the composition maps $\phi^{i-1}_1 \cdots \phi^1_1 : \W \to \W_i$ $(i \in [m])$, 
where $\phi^{i-1}_1 \cdots \phi^1_1 = i_{\W}$ for $i=1$.  
Finally, since $\E^u_\#(W, M; Z_i) \subset \E^u_\#(W, M; Z)$ by (4)(i), 
we can define the required homotopy 
$$\mbox{$\phi  : \W \times [0,m] \lra \E^u_\#(W, M; Z)$ \hspace{2mm} by 
\hspace{2mm}  
$\phi_t = \phi^i_{t-i+1}\phi^{i-1}_1 \cdots \phi^1_1$ \hspace{2mm} $(t \in [i-1,i]$, $i \in [m])$.}$$  
By (1)(i) the homotopy $\phi$ is well-defined and the required conditions (1), (2) for $\phi$ follow from the corresponding properties (1), (2) of the homotopies $\phi^i$ $(i \in [m])$. 
For (1) (ii) note that $\phi_m(h) = \phi_1^m(\phi^{m-1}_1 \cdots \phi^1_1(h)) = \id$ on $X_m \cup (W \cap Z_m)$ and 
that $X \subset W \cap (X_m \cup Z_m) = X_m \cup (W \cap Z_m)$ by (4)(ii). 

It remains to verify the assertions (3) and (4). 

(3) Take any $y \in Y_{i+1}$. 
We have $y \in E$ for some $(k, (O,E,D)) \in {\cal F}_{i+1}$, so  
$(O,E,D) \in {\cal S}(O_k, E_k^{i+1}, D_k^{i+1})$ and 
 (a) $E \cap X \neq \emptyset$ and $k \leq i$ or (b) $E \cap Z \neq \emptyset$. 
 It follows that $\pi|_O : O \to O_k$ is an isometry and $E = (\pi|_O)^{-1}(D_k^i)$ since $E_k^{i+1} = D_k^i$. 

In the case (a) with $k = i$;  
Since $O_k = U_i$ and $D_k^i = D_i^i = C_i$, it follows that 
$y \in E = (\pi|_O)^{-1}(C_i) \subset \pi^{-1}(C_i)$. 
Since $O \cap X \supset E \cap X \neq \emptyset$, it follows that 
$O \in {\cal V}_i$ and $V_i \supset O \supset E \ni y$. 
Hence we have $y \in \pi^{-1}(C_i) \cap V_i = X_i$ 

In the case (a) with $k \leq i-1$ or (b); 
Let $E' = (\pi|_O)^{-1}(E_k^i)$. Then $(O, E', E) \in {\cal S}(O_k, E_k^i, D_k^i)$ and  
$(k, (O, E', E)) \in {\cal F}_i$ since $E' \cap X \supset E \cap X \neq \emptyset$ in the case (a) with $k \leq i-1$ and 
$E' \cap Z \supset E \cap Z \neq \emptyset$ in the case (b). 
Hence we have $y \in E \subset Z_i$.  

(4)(ii) Give any $x \in X$. Then $\pi(x) \in F_k$ for some $k \in [m]$. 

In the case where $k \leq m - 1$; 
Since $\pi(x) \in F_k \subset U_k = O_k$, it follows that 
$x \in O$ for some $O \in {\cal S}(O_k)$ and $\pi|_O : O \to O_k$ is an isometry.
Put $E = (\pi|_O)^{-1}(E_k^m)$ and $D = (\pi|_O)^{-1}(D_k^m)$. 
Then, it follows that $x \in D$ since $\pi(x) \in F_k \subset D_k^m$, and that 
$(k, (O, E, D)) \in {\cal F}_m$ since $(O, E, D) \in {\cal S}(O_k, E_k^m, D_k^m)$, $x \in E \cap X \neq \emptyset$ and $k \leq m-1$. 
This implies that $x \in D \subset Z_m$. 

In the case where $k = m$; 
Since $\pi(x) \in F_m \subset C_m \subset U_m$, 
it follows that $x \in \pi^{-1}(C_m)$ and there exists $V' \in {\cal S}(U_m)$ with $x \in V'$. 
Since $x \in V' \cap X$, we have $V' \in {\cal V}_m$ and $x \in V' \subset V_m$. 
This implies that $x \in \pi^{-1}(C_m) \cap V_m = X_m$.  

The statements (4)(i) and (4)(iii) are verified similarly. This completes the proof. 
\end{proof} 

\section{Groups of uniform homeomorphisms of metric spaces with bi-Lipschitz Euclidean ends} 

In this section we study some global deformation properties of groups of uniform homeomorphisms of manifolds with bi-Lipschitz Euclidean ends. 
The Euclidean space $\IR^n$ admits the canonical Riemannian covering projection $\pi : \IR^n \to \IR^n/\IZ^n$ onto the flat torus. 
Therefore we can apply the Local Deformation theorem Theorem~\ref{thm_local_deformation} to uniform embeddings in $\IR^n$. 

\begin{proposition}\label{prop_deform_Euclid} 
For any closed subset $X$ of $\IR^n$ and any uniform neighborhoods $W' \subset W$ of $X$ in $\IR^n$ 
there exists a neighborhood $\W$ of the inclusion map $i_W : W \subset \IR^n$ in $\E^u_\ast(W, \IR^n)$ and 
a homotopy $\phi : \W \times [0,1] \lra \E^u_\ast(W, \IR^n)$ such that 
\begin{itemize} 
\item[(1)] for each $h \in \W$ \ \ 
\begin{tabular}[t]{c@{\ }l}
{\rm (i)} & $\phi_0(h) = h$, \ \ 
{\rm (ii)} $\phi_1(h) = \id$ on $X$, \\[2mm] 
{\rm (iii)} & $\phi_t(h) = h$ on $W - W'$ \ \ and \ \ $\phi_t(h)(W) = h(W)$ \ \ $(t \in [0,1])$,  
\end{tabular}
\vskip 1mm 
\item[(2)] $\phi_t(i_W) = i_W$ \ $(t \in [0,1])$.
\end{itemize} 
\end{proposition}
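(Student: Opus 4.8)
The plan is to derive Proposition~\ref{prop_deform_Euclid} directly from the Local Deformation Theorem~\ref{thm_local_deformation}, since $\IR^n$ sits as the total space of a metric covering over a compact manifold. Concretely, the canonical projection $\pi : \IR^n \to \IR^n/\IZ^n$ onto the flat $n$-torus is a Riemannian covering, hence a metric covering projection, and $\IR^n/\IZ^n$ is a compact topological $n$-manifold without boundary. So all the hypotheses that Theorem~\ref{thm_local_deformation} places on the covering map and on $N$ are automatically met, and no new geometry needs to be checked beyond what is already recorded in Section~2.

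Next I would line up the remaining data. The proposition provides exactly a closed set $X \subset \IR^n$ together with nested uniform neighborhoods $W' \subset W$ of $X$, which is what Theorem~\ref{thm_local_deformation} asks for. Since no embedding here is required to fix any prescribed set, I would take $Y = Z = \emptyset$; both are closed and $\emptyset$ is (vacuously) a uniform neighborhood of $\emptyset$, so the hypothesis relating $Z$ and $Y$ holds. With this choice $\E^u_\ast(W,\IR^n;Y) = \E^u_\ast(W,\IR^n;Z) = \E^u_\ast(W,\IR^n)$, so the neighborhood $\W$ and the homotopy $\phi$ produced by Theorem~\ref{thm_local_deformation} live in exactly the spaces named in the proposition. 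Conclusions (1)(i), (1)(ii), (1)(iii) and (2) of the theorem are verbatim the conclusions of the proposition; conclusion (1)(iv) is vacuous because $W \cap \partial\IR^n = \emptyset$, and this same fact makes $\E^u_\ast$ and $\E^u_\#$ coincide, so there is no discrepancy with the intermediate statements Theorem~\ref{thm_local_deformation-2} and Lemmas~\ref{lem_1},~\ref{lem_2}.

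I do not expect a genuine obstacle here: the proposition is a special case of Theorem~\ref{thm_local_deformation}, and all the real difficulty --- the Arzela--Ascoli passage from the Edwards--Kirby compact deformation theorem to the uniform setting --- is already absorbed into the proof of that theorem. The only things worth spelling out are that the torus covering is a bona fide metric covering projection onto a compact boundaryless manifold, and the harmless bookkeeping that setting $Y = Z = \emptyset$ erases the fixed-set decoration from the embedding spaces.
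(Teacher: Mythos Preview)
Your proposal is correct and matches the paper's approach exactly: the paper introduces Proposition~\ref{prop_deform_Euclid} by observing that $\pi:\IR^n\to\IR^n/\IZ^n$ is a metric covering projection onto a compact manifold, so Theorem~\ref{thm_local_deformation} applies directly, and states the proposition without further proof. Your choice $Y=Z=\emptyset$ and the remark that $\partial\IR^n=\emptyset$ makes condition (1)(iv) vacuous are the only bookkeeping needed, and you handle them correctly.
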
 

The relevant feature of Euclidean space $\IR^n$ in this context is the existence of similarity transformations 
$$\mbox{$k_\gamma : \IR^n \approx \IR^n$ : \ $k_\gamma(x) = \gamma x$ \hspace{5mm} \ $(\gamma > 0)$.}$$
This enables us to deduce, from the local one, a global deformation in groups of uniform homeomorphisms on $\IR^n$ 
and more generally, manifolds with bi-Lipschitz Euclidean ends. 

\subsection{Euclidean ends case} \mbox{} 

Recall our conventions: For $r \in \IR$ we set $\IR^n_r = \IR^n - O(r)$, where $O(r) = \{ x \in \IR^n \mid \| x \| < r \}$. 
For $s > r > 0$ and $\e > 0$, let $\E^u(\iota_s, \e; \IR^n_s, \IR^n_r)$ denote the open $\e$-neighborhood of the inclusion map $\iota_{s,r} : \IR^n_s \subset \IR^n_r \subset \IR^n$ in the space $\E^u(\IR^n_s, \IR^n_r)_u.$
We can apply Proposition~\ref{prop_deform_Euclid} to $(X, W', W) = (\IR^n_v, \IR^n_u, \IR^n_s)$ and replace $\W$ by a smaller one to obtain the following conclusion. 

\begin{lemma}\label{lemma_local_deform_E-end} 
For any $0 \leq r < s < u < v$ and $\e > 0$ there exist $\delta > 0$ and a homotopy 
$$\phi : \E^u(\iota_{s,r}, \delta; \IR^n_s, \IR^n_r) \times [0,1] \lra \E^u(\iota_{s,r}, \e; \IR^n_s, \IR^n_r)$$ 
such that {\rm (1)} for each $h \in \E^u(\iota_{s,r}, \delta; \IR^n_s, \IR^n_r)$ 
\begin{itemize}
\item[] \hspace*{8mm} {\rm (i)} $\phi_0(h) = h$, \ \ {\rm (ii)} $\phi_1(h) = \id$ on $\IR^n_v$, \ \ 
{\rm (iii)} $\phi_t(h) = h$ on $\IR^n_s - \IR^n_u$ \ $(t \in [0,1])$,  

\item[(2)] $\phi_t(\iota_{s,r}) = \iota_{s,r}$ \ $(t \in [0,1])$.
\end{itemize} 
\end{lemma}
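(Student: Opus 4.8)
The plan is to obtain Lemma~\ref{lemma_local_deform_E-end} as a direct specialization of Proposition~\ref{prop_deform_Euclid}, with two minor adjustments: first, translating between the ``proper embeddings fixing nothing'' formulation of the Proposition and the ``$\e$-neighborhood of the inclusion'' formulation required here; second, upgrading the conclusion that the homotopy maps into $\E^u_\ast(W,\IR^n)$ to the quantitative statement that it maps into the $\e$-ball $\E^u(\iota_{s,r},\e;\IR^n_s,\IR^n_r)$ around the inclusion.

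\smallskip

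\textbf{Step 1: Apply the Proposition with the right data.} First I would set $(X,W',W)=(\IR^n_v,\IR^n_u,\IR^n_s)$. This is legitimate: $\IR^n_v$ is closed in $\IR^n$, and since $r<s<u<v$ we have $\IR^n_v\subset\IR^n_u\subset\IR^n_s$ with each a uniform neighborhood of the next (indeed $C_\eta(\IR^n_v)\subset\IR^n_u$ for $\eta<u-v\cdot 0$—more precisely, any point within distance $v-u$ of $\IR^n_v$ lies in $\IR^n_u$, using that $\IR^n_a\supset\IR^n_b$ iff $a\le b$ and the triangle inequality on norms). Proposition~\ref{prop_deform_Euclid} then yields a neighborhood $\W_0$ of $i_{\IR^n_s}$ in $\E^u_\ast(\IR^n_s,\IR^n)$ and a homotopy $\phi:\W_0\times[0,1]\to\E^u_\ast(\IR^n_s,\IR^n)$ satisfying $\phi_0(h)=h$, $\phi_1(h)=\id$ on $\IR^n_v$, $\phi_t(h)=h$ on $\IR^n_s-\IR^n_u$, $\phi_t(h)(\IR^n_s)=h(\IR^n_s)$, and $\phi_t(i_{\IR^n_s})=i_{\IR^n_s}$.

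\smallskip

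\textbf{Step 2: Reinterpret the target as embeddings into $\IR^n_r$.} I would observe that condition (1)(iii), $\phi_t(h)=h$ on $\IR^n_s-\IR^n_u$, together with the hypothesis that $h$ is close to $\iota_{s,r}$, forces the image of $\phi_t(h)$ to remain inside $\IR^n_r$: outside $\IR^n_u$ it agrees with $h$, and by shrinking to a $\delta$-neighborhood with $\delta$ small we can keep $h(\IR^n_s)\subset\IR^n_r$, while on the compact piece $\IR^n_s\cap cl_{\IR^n}(\IR^n_u\setminus\IR^n_v)$ the supported deformation $\phi$ moves points a bounded amount that we control via the continuity of $\phi$ together with $\phi_t(i_{\IR^n_s})=i_{\IR^n_s}$. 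This lets me regard $\phi_t(h)\in\E^u(\IR^n_s,\IR^n_r)$; since also $\phi_t(h)(\IR^n_s)=h(\IR^n_s)$, properness over $\IR^n_r$ (if $r>0$, the boundary sphere of radius $r$) is inherited from $h$.

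\smallskip

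\textbf{Step 3: Extract $\delta$ and shrink.} The main (and essentially only) technical point is the quantitative $\e$-control. Since $\phi$ is continuous and $\phi_t(i_{\IR^n_s})=i_{\IR^n_s}$ for all $t$, the map $(h,t)\mapsto d(\phi_t(h),\iota_{s,r})$ is continuous on $\W_0\times[0,1]$ and vanishes on $\{i_{\IR^n_s}\}\times[0,1]$; because $[0,1]$ is compact, there is a neighborhood of $i_{\IR^n_s}$ on which $d(\phi_t(h),\iota_{s,r})<\e$ for all $t$. Intersecting with $\W_0$ and with a $\delta$-ball small enough to also guarantee $h(\IR^n_s)\subset\IR^n_r$ (Step 2), I take $\delta>0$ so that $\E^u(\iota_{s,r},\delta;\IR^n_s,\IR^n_r)$ is contained in this intersection, and restrict $\phi$ accordingly. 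The restricted homotopy then lands in $\E^u(\iota_{s,r},\e;\IR^n_s,\IR^n_r)$ and inherits properties (i)--(iii) and (2) verbatim. I expect Step 3 to be the only place requiring genuine care, and it is a routine compactness-plus-continuity argument; everything else is bookkeeping about the nested ends $\IR^n_v\subset\IR^n_u\subset\IR^n_s\subset\IR^n_r$.
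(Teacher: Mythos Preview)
Your proposal is correct and follows exactly the route the paper takes: the paper's proof is the single sentence ``We can apply Proposition~\ref{prop_deform_Euclid} to $(X, W', W) = (\IR^n_v, \IR^n_u, \IR^n_s)$ and replace $\W$ by a smaller one to obtain the following conclusion,'' and your Steps 1 and 3 unpack precisely this. One small simplification: your Step 2 is more elaborate than necessary, since the image-preservation clause $\phi_t(h)(W)=h(W)$ in Proposition~\ref{prop_deform_Euclid} already gives $\phi_t(h)(\IR^n_s)=h(\IR^n_s)\subset\IR^n_r$ directly, without any separate analysis of the supported region.
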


Now we apply a similarity transformation $k_\gamma$ for a sufficiently large $\gamma > 0$ to Lemma~\ref{lemma_local_deform_E-end}.

\begin{lemma}\label{lemma_deformation} 
For any $c, s_0 > 0$ and $\beta > \alpha > 1$ there exist $s > s_0$ and a homotopy 
$$\psi : \E^u(\iota_s, c; \IR^n_s, \IR^n) \times [0,1] \lra \E^u(\iota_s, s; \IR^n_s, \IR^n)$$ 
such that {\rm (1)} for each $h \in \E^u(\iota_s, c; \IR^n_s, \IR^n)$ 
\begin{itemize}
\item[] \hspace{8mm} {\rm (i)} $\psi_0(h) = h$, \ \ {\rm (ii)} $\psi_1(h) = \id$ on $\IR^n_{\beta s}$, \ \ 
{\rm (iii)} $\psi_t(h) = h$ on $\IR^n_s - \IR^n_{\alpha s}$ \ $(t \in [0,1])$,  

\item[(2)] $\psi_t(\iota_s) = \iota_s$ \ $(t \in [0,1])$
\item[(3)] $\psi(\E^u(\iota_s, c; \IR^n_s, \IR^n_r) \times [0,1]) \subset \E^u(\iota_s, s; \IR^n_s, \IR^n_r)$ for any $r < s$. 
\end{itemize} 
\end{lemma}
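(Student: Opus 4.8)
The plan is to obtain Lemma~\ref{lemma_deformation} from Lemma~\ref{lemma_local_deform_E-end} by conjugating with a similarity transformation $k_\gamma(x) = \gamma x$ for a suitably large $\gamma$. The point of rescaling is that the sup-metric is \emph{not} scale-invariant: if $h$ is $c$-close to the inclusion, then the conjugate $k_\gamma^{-1} h k_\gamma$ is $(c/\gamma)$-close to the inclusion, so a fixed ``large'' displacement $c$ becomes an arbitrarily small displacement $\delta$ after conjugation. This is exactly what lets us feed a $c$-neighborhood into a lemma that only handles $\delta$-neighborhoods.

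First I would fix the radii to which Lemma~\ref{lemma_local_deform_E-end} will be applied. Set $(r,s,u,v) = (0,1,\alpha,\beta)$ in that lemma (here $0 \le r < s < u < v$ holds since $\beta > \alpha > 1 > 0$), obtaining $\delta > 0$ and a homotopy $\phi$ on $\E^u(\iota_{1,0},\delta;\IR^n_1,\IR^n)$ into $\E^u(\iota_{1,0},\e;\IR^n_1,\IR^n)$ with $\phi_1(h) = \id$ on $\IR^n_\beta$ and $\phi_t(h) = h$ on $\IR^n_1 - \IR^n_\alpha$. Here I would choose $\e$ in advance so that $\gamma\e \le s$ will hold; concretely, take $\e := 1$ (so the target radius is controlled once we pick $\gamma$), or simply carry $\e$ as a free parameter and constrain it at the end. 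Then choose $\gamma > 0$ large enough that $\gamma > s_0$ and $c/\gamma \le \delta$ and $\gamma\e \le \gamma$ (the last being automatic if $\e \le 1$); set $s := \gamma$. Note $k_\gamma$ restricts to a similarity $\IR^n_\gamma \approx \IR^n_1$ carrying $\IR^n_{t\gamma}$ onto $\IR^n_t$ for every $t$, and carries $\IR^n$ onto itself, $\IR^n_r$ onto $\IR^n_{r/\gamma}$.

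Next I would define the conjugated homotopy. For $h \in \E^u(\iota_s, c; \IR^n_s, \IR^n)$ put $\bar h := k_\gamma^{-1} h k_\gamma \in \E^u(\IR^n_1, \IR^n)$; one checks $\bar h$ is a uniform embedding (conjugation by a bi-Lipschitz map preserves uniform continuity and uniform embeddings), and $d(\bar h, \iota_{1,0}) = d(h,\iota_s)/\gamma \le c/\gamma \le \delta$, so $\bar h$ lies in the domain of $\phi$. Define $\psi_t(h) := k_\gamma\, \phi_t(\bar h)\, k_\gamma^{-1}$. The scaling identities immediately translate the conclusions of Lemma~\ref{lemma_local_deform_E-end}: property (1)(i) from $\phi_0(\bar h) = \bar h$; (1)(ii) $\psi_1(h) = \id$ on $k_\gamma(\IR^n_\beta) = \IR^n_{\beta\gamma} = \IR^n_{\beta s}$; (1)(iii) $\psi_t(h) = h$ on $k_\gamma(\IR^n_1 - \IR^n_\alpha) = \IR^n_s - \IR^n_{\alpha s}$; (2) from $\phi_t(\iota_{1,0}) = \iota_{1,0}$; and the sup-metric bound $d(\psi_t(h), \iota_s) = \gamma\, d(\phi_t(\bar h), \iota_{1,0}) \le \gamma\e \le s$ (after choosing $\e \le 1$), giving the target space $\E^u(\iota_s, s; \IR^n_s, \IR^n)$. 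For (3), note that if moreover $h$ maps into $\IR^n_r$ for some $r < s$, then $\bar h$ maps into $\IR^n_{r/\gamma}$; since the deformation $\phi_t$ is supported in $\IR^n_1 - \IR^n_\alpha$ and agrees with $\bar h$ outside, $\phi_t(\bar h)$ still maps into $\IR^n_{r/\gamma} \cup (\IR^n_1 - \IR^n_\alpha) \subset \IR^n_{r/\gamma}$ provided $r/\gamma \le 1$, i.e. $r < s$; hence $\psi_t(h)$ maps into $\IR^n_r$, and combined with the sup-bound it lies in $\E^u(\iota_s, s; \IR^n_s, \IR^n_r)$.

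The remaining point is continuity of $\psi$. Here I would invoke that conjugation by a \emph{fixed} bi-Lipschitz homeomorphism induces a homeomorphism (indeed an isometry up to the factor $\gamma$) of the relevant spaces of uniform embeddings with the uniform topology — precisely the kind of statement underlying the isometries $\theta_i$ in the proof of Lemma~\ref{lem_1} — so $h \mapsto \bar h$ and $g \mapsto k_\gamma g k_\gamma^{-1}$ are continuous, and $\psi$ is the composite of these with the continuous $\phi$. I do not expect a real obstacle here: everything is pre-/post-composition by a single bi-Lipschitz map, which scales the sup-metric by a constant. The one place calling for a little care — and the ``main obstacle'' such as it is — is bookkeeping the inequalities among $\delta$, $\e$, $c$, $\gamma = s$, $s_0$ so that all the containments and neighborhood-size constraints in (1)–(3) hold simultaneously; this is routine once one fixes the order of quantifiers: given $c, s_0, \alpha, \beta$, first run Lemma~\ref{lemma_local_deform_E-end} with $\e \le 1$ to get $\delta$, then pick $\gamma = s > \max\{s_0, c/\delta, 1\}$.
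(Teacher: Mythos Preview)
Your approach is essentially the paper's: apply Lemma~\ref{lemma_local_deform_E-end} at unit scale with $\e \le 1$, then conjugate by the dilation $k_s$ with $s=\gamma$ chosen so that $c/s \le \delta$ and $s > s_0$; the paper writes this as $s := c/\delta$ after shrinking $\delta$ below $c/s_0$.

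The one real slip is in your justification of (3). You write that ``the deformation $\phi_t$ is supported in $\IR^n_1 - \IR^n_\alpha$ and agrees with $\bar h$ outside,'' but this is backward: Lemma~\ref{lemma_local_deform_E-end}(1)(iii) says $\phi_t(\bar h) = \bar h$ on the annulus $\IR^n_1 - \IR^n_\alpha$, so the support lies in $\IR^n_\alpha$, and from that alone your asserted containment $\phi_t(\bar h)(\IR^n_1) \subset \IR^n_{r/\gamma} \cup (\IR^n_1 - \IR^n_\alpha)$ does not follow --- on $\IR^n_\alpha$ you have no direct control over where $\phi_t(\bar h)$ lands. The fix (and the paper's route) is to invoke the image-preservation clause $\phi_t(h)(W) = h(W)$ that Lemma~\ref{lemma_local_deform_E-end} inherits from Proposition~\ref{prop_deform_Euclid}(1)(iii): then ${\rm Im}\,\phi_t(\bar h) = {\rm Im}\,\bar h \subset \IR^n_{r/\gamma}$ immediately, and conjugating back gives ${\rm Im}\,\psi_t(h) = {\rm Im}\,h \subset \IR^n_r$.
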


\begin{proof} 
We apply Lemma~\ref{lemma_local_deform_E-end} to $0 < 1 < 2 \alpha < 2\beta$ and $\e = 1$. This yields $\delta \in (0, c/s_0)$ and a homotopy 
$$\phi : \E^u(\iota_1, \delta; \IR^n_1, \IR^n) \times [0,1] \lra \E^u(\iota_1, 1; \IR^n_1, \IR^n).$$ 
as in Lemma~\ref{lemma_local_deform_E-end}.  
Let $s := c/\delta$. Then $s > s_0$ and we have the homeomorphism 
$$\eta : \E^u(\IR^n_1, \IR^n) \approx \E^u(\IR^n_s, \IR^n) : \ \eta(f) =  k_s f \, k_{1/s}.$$
Since $\eta(\iota_1) = \iota_s$ and $\ds d(\eta(f), \eta(g)) = s \,d(f, g)$, for each $c > 0$ we have the restriction  
$$\eta_c : \E^u(\iota_1, a; \IR^n_1, \IR^n)) \approx \E^u(\iota_s, sa; \IR^n_s, \IR^n).$$  
Then the homotopy $\psi$ is defined by 
$$\psi_t = \eta_s \phi_t\eta_c^{-1}.$$ 
The conditions (1), (2) on $\psi$ follow from the corresponding properties of $\phi$. 
By (1)(iii) ${\rm Im}\, \psi_t(h) = {\rm Im}\, h$ for each $h \in \E^u(\iota_s, c; \IR^n_s, \IR^n)$, which implies (3). 
\end{proof} 

\subsection{Bi-Lipschitz Euclidean ends case} \mbox{} 

Suppose $(X, d)$ is a metric space and $L$ is a bi-Lipschitz $n$-dimensional Euclidean end of $X$. 
This means that $L$ is a closed subset of $X$ which admits a bi-Lipschitz homeomorphism 
$\theta : (\IR^n_1, \partial \IR^n_1) \cong ((L, {\rm Fr}_X L), d|_L)$ and 
$d(X - L, \theta(\IR^n_r)) \to \infty$ as $r \to \infty$. 
%A closed subset $L$ of $X$ is called a bi-Lipschitz $n$-dimensional Euclidean end of $(X, d)$ if 
%there exists a bi-Lipschitz homeomorphism 
%$\theta : ((L, {\rm Fr}_X L), d|_L) \cong (\IR^n_1, \partial \IR^n_1)$ and 
%$d(X - L, \theta(\IR^n_r)) \to \infty$ as $r \to \infty$. 
Let $\kappa \geq 1$ be the bi-Lipschitz constant of $\theta$ and 
for $a \geq 1$ let $L_a = \theta(\IR^n_a)$ and $\theta_a = \theta|_{\IR^n_a} : \IR^n_a \approx L_a$. 

\begin{lemma}\label{lemma_deform_homeo_1} 
For any $\lambda > 0$ and $s_0 \geq 1$ there exist $s > s_0$, $\mu > 0$ and 
a homotopy $\phi : {\cal H}^u(X; \lambda) \times [0,1] \lra {\cal H}^u(X; \mu)$
such that for each $h \in {\cal H}^u(X; \lambda)$
\begin{itemize}
\item[(i)\,] $\phi_0(h) = h$, \ \ {\rm (ii)} $\phi_1(h) = \id$ on $L_{3s}$, \ \ {\rm (iii)} $\phi_t(h) = h$ on $X - L_{2s}$ \ $(t \in [0,1])$,  
\item[(iv)] if $h = \id$ on $L_s$, then $\phi_t(h) = h$ $(t \in [0,1])$. 
\end{itemize} 
\end{lemma}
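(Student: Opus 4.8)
The plan is to transport the problem to $\IR^n$ via the bi-Lipschitz homeomorphism $\theta$ at the end $L$, apply Lemma~\ref{lemma_deformation} there, and push the resulting deformation of $h$ near $L$ back to a deformation of $h$ on all of $X$. Since $d(X - L, L_r) \to \infty$ as $r \to \infty$, first fix $r_0 \ge 1$ with $d(X - L, L_r) > \lambda$ for every $r \ge r_0$; then for each $h \in {\cal H}^u(X;\lambda)$ and each $x \in L_r$ with $r \ge r_0$ we have $d(h(x), x) \le \lambda < d(x, X - L)$, hence $h(L_r) \subset L$. Now apply Lemma~\ref{lemma_deformation} with $\alpha := 2$, $\beta := 3$, with $c$ any number larger than $\kappa\lambda$, and with its free parameter chosen $\ge \max\{s_0, r_0\}$, obtaining $s > \max\{s_0, r_0\}$ and a homotopy $\psi : \E^u(\iota_s, c; \IR^n_s, \IR^n) \times [0,1] \to \E^u(\iota_s, s; \IR^n_s, \IR^n)$ with properties (1)(i)--(iii) and (2) of that lemma. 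For $h \in {\cal H}^u(X;\lambda)$ set $f_h := \theta^{-1}\circ(h|_{L_s})\circ\theta_s$; since $\theta$ is $\kappa$-bi-Lipschitz, $f_h$ is a uniform embedding of $\IR^n_s$ with $d(f_h, \iota_s) \le \kappa\,d(h, \id_X) \le \kappa\lambda < c$ and ${\rm Im}\,f_h = \theta^{-1}(h(L_s)) \subset \IR^n_1$, so $f_h \in \E^u(\iota_s, c; \IR^n_s, \IR^n_1)$, and $h \mapsto f_h$ is $\kappa$-Lipschitz, hence continuous.

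Define $\phi_t(h)$ to equal $h$ on $X - {\rm Int}_X L_s$ and to equal $\theta\circ\psi_t(f_h)\circ\theta_s^{-1}$ on $L_s$. Property (1)(iii) of Lemma~\ref{lemma_deformation} gives $\psi_t(f_h) = f_h$ on $\IR^n_s - \IR^n_{2s}$, hence on its closure, so $\theta\psi_t(f_h)\theta_s^{-1} = \theta f_h\theta_s^{-1} = h$ on ${\rm Fr}_X L_s$; therefore the two formulas agree there and $\phi_t(h)$ is a well-defined continuous self-map of $X$. Since ${\rm Im}\,\psi_t(f_h) = {\rm Im}\,f_h$ (as recorded in the proof of Lemma~\ref{lemma_deformation}), the map $\theta\psi_t(f_h)\theta_s^{-1}$ carries $L_s$ homeomorphically onto $h(L_s)$, and the same construction applied to $\psi_t(f_h)^{-1}$ exhibits $\phi_t(h)^{-1}$ as $h^{-1}$ on $X - {\rm Int}_X h(L_s)$ and $\theta_s\psi_t(f_h)^{-1}\theta^{-1}$ on $h(L_s)$; thus $\phi_t(h)$ is a homeomorphism of $X$.

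The deformation is supported in $L_{2s}$, and $d\big(L_{2s},\, X - {\rm Int}_X L_s\big) \ge \min\{\, s/\kappa,\ d(L_{2s}, X - L)\,\} > 0$ (inside $L$ the metric distortion is at most $\kappa$, and $r_0 \le s$), so there is $\delta > 0$ with $C_\delta(L_{2s}) \subset {\rm Int}_X L_s$. The usual pasting estimate --- given $d(x,y) < \delta$, either both of $x,y$ avoid $L_{2s}$, where $\phi_t(h)$ acts as $h$, or one of them lies in $L_{2s}$, forcing both into $L_s$, where $\phi_t(h)$ acts as $\theta\psi_t(f_h)\theta_s^{-1}$ --- shows $\phi_t(h)$ is uniformly continuous, and transporting this separation through the uniformly continuous $h^{-1}$ does the same for $\phi_t(h)^{-1}$. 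Hence $\phi_t(h) \in {\cal H}^u(X)$; setting $\mu := \lambda + \kappa s$ we get $d(\phi_t(h), \id_X) \le \max\{\lambda, \kappa s\} < \mu$ (using $d(\psi_t(f_h), \iota_s) < s$), so $\phi$ takes values in ${\cal H}^u(X;\mu)$. Continuity of $\phi : {\cal H}^u(X;\lambda) \times [0,1] \to {\cal H}^u(X;\mu)$ follows from the estimate $d(\phi_t(h), \phi_{t'}(h')) \le \max\{\, d(h,h'),\ \kappa\,d(\psi_t(f_h), \psi_{t'}(f_{h'}))\,\}$ together with continuity of $h \mapsto f_h$ and of $\psi$. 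Finally: (i) is $\psi_0 = \id$; (ii) follows from $\psi_1(f_h) = \id$ on $\IR^n_{3s}$, giving $\phi_1(h) = \id$ on $L_{3s}$; (iii) follows from $\psi_t(f_h) = f_h$ off $\IR^n_{2s}$, giving $\phi_t(h) = h$ off $L_{2s}$, a fortiori on $X - L_{2s}$; (iv) follows since $h = \id$ on $L_s$ forces $f_h = \iota_s$, whence $\psi_t(f_h) = \iota_s$ by property (2) and $\phi_t(h) = h$.

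I expect the one place where genuine work is required is the verification that the glued map $\phi_t(h)$ is a \emph{uniform} homeomorphism of $X$ --- in particular that its inverse is uniformly continuous across the annular gluing region $L_s - L_{2s}$ --- and that $\phi$ has values in a single ${\cal H}^u(X;\mu)$; this is where the hypotheses that $L$ is a \emph{bi-Lipschitz} Euclidean end and that $d(X - L, L_r) \to \infty$ are really used: the former bounds the metric distortion on $L$, and the latter both guarantees $h(L_s) \subset L$ and provides the uniform separation of the support of the deformation from $X - L$.
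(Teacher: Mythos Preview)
Your proof is correct and follows essentially the same approach as the paper: transport the restriction $h|_{L_s}$ to $\IR^n$ via the bi-Lipschitz chart $\theta$, apply Lemma~\ref{lemma_deformation} with $\alpha=2$, $\beta=3$, and glue the resulting deformation back to $h$ off $L_{2s}$. You supply more detail than the paper on the uniform continuity of the glued map and the continuity of $\phi$, and your choice of $\mu = \lambda + \kappa s$ is a safe bound (the paper writes $\mu = \kappa c$, but any sufficiently large $\mu$ works).
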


\begin{proof} Take any $\lambda > 0$. 
Since $d(X - L, L_r) \to \infty$ ($r \to \infty$), there exists 
\begin{itemize}
\item[(1)] $r > s_0$ such that $h(L_r) \subset L_1$ for any $h \in {\cal H}^u(X; \lambda)$. 
\end{itemize} 
Let $c \equiv \lambda \kappa > 0$. 
Applying Lemma~\ref{lemma_deformation} to $c, r$ and $\alpha=2$, $\beta=3$, 
we obtain $s > r$ and a homotopy 
$$\psi : \E^u(\iota_s, c; \IR^n_s, \IR^n_1) \times [0,1] \lra \E^u(\iota_s, s; \IR^n_s, \IR^n_1)$$ 
such that (2) for each $f \in \E^u(\iota_s, c; \IR^n_s, \IR^n_1)$ 
\begin{itemize}
\item[] \hspace{8mm} (i) $\psi_0(f) = f$, \ \ (ii) $\psi_1(f) = \id$ on $\IR^n_{3s}$, \ \ 
(iii) $\psi_t(f) = h$ on $\IR^n_s - \IR^n_{2s}$ \ $(t \in [0,1])$,  
\item[(3)] $\psi_t(\iota_s) = \iota_s$ \ $(t \in [0,1])$. 
\end{itemize} 

Consider the homeomorphism 
$$\Theta_s : \E^u(L_s, L_1) \approx \E^u(\IR^n_s, \IR^n_1) : \ \ \Theta_s(f) = \theta_1^{-1} f \hspace{0.5mm}\theta_s.$$ 
Since $\theta$ is $\kappa$-bi-Lipschitz, it is seen that $\Theta_s$ is also $\kappa$-bi-Lipschitz with respect to the sup-metrics. 
Since $\Theta_s(\iota_s^L) = \iota_s$, 
the maps $\Theta_s$ and $\Theta_s^{-1}$ restrict to   
$$\Theta_s : \E^u(\iota_s^L, \lambda; L_s, L_1) \lra \E^u(\iota_s, c; \IR^n_s, \IR^n_1) \hspace{4mm} \text{and} \hspace{4mm}   
\Theta_s^{-1} : \E^u(\iota_s, c; \IR^n_s, \IR^n_1) \lra \E^u(\iota_s^L, \kappa c; L_s, L_1).$$ 
Hence we obtain the homotopy 
$$\chi : \E^u(\iota_s^L, \lambda; L_s, L_1) \times [0,1] \lra \E^u(\iota_s^L, \kappa c; L_s, L_1) : \ \ \chi_t = (\Theta_s)^{-1} \psi_t \Theta_s.$$ 
From (2), (3) it follows that 
\begin{itemize}
\item[(4)] for each $h \in \E^u(\iota_s^L, \lambda; L_s, L_1)$ \\  
\hspace*{5mm} (i) $\chi_0(f) = f$, \ \ (ii) $\chi_1(f) = \id$ on $L_{3s}$, \ \ 
(iii) $\chi_t(f) = f$ on $L_s - L_{2s}$ \ $(t \in [0,1])$,  
\item[(5)] $\chi_t(\iota_s^L) = \iota_s^L$ \ $(t \in [0,1])$. 
\end{itemize} 

Since $s > r$, by (1) we have the restriction map 
$$R_s : {\cal H}^u(X; \lambda) \lra  \E^u(\iota_s^L, \lambda; L_s, L_1) : R_s(h) = h|_{L_s}.$$ 
Let $\mu =\kappa c$. 
Due to (4)(iii), the required homotopy is defined by 
$$\phi : {\cal H}^u(X; \lambda) \times [0,1] \lra {\cal H}^u(X; \mu) \ \ \ \text{by} \ \ \ 
\phi_t(h) = 
\left\{ 
\begin{array}[c]{@{\,}cl}
\chi_tR_s(h) & \text{on \ $L_s$} \\[2mm] 
h & \text{on \ $X - L_{2s}.$}
\end{array} \right.$$
\vskip -5mm 
\end{proof} 

\begin{lemma}\label{lemma_deform_homeo_2} 
For any $\lambda > 0$ and $r > r_0 \geq 1$ there exist $\lambda' > 0$ and 
a homotopy $\chi : {\cal H}^u(X; \lambda) \times [0,1] \lra {\cal H}^u(X; \lambda')$
such that for each $h \in {\cal H}^u(X; \lambda)$
\begin{itemize}
\item[(i)\,] $\chi_0(h) = h$, \ \ {\rm (ii)} $\chi_1(h) = \id$ on $L_r$, \ \ {\rm (iii)} $\chi_t(h) = h$ on $h^{-1}(X - L_{r_0}) - L_{r_0}$ $(t \in [0,1])$, 
\item[(iv)] if $h = \id$ on $L_{r_0}$, then $\chi_t(h) = h$ $(t \in [0,1])$. 
\end{itemize} 
\end{lemma}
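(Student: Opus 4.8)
The plan is to derive this from Lemma~\ref{lemma_deform_homeo_1} by an ``overshoot and slide back'' device. First I would apply Lemma~\ref{lemma_deform_homeo_1} with $s_0 := r$ to obtain $s > r$, $\mu > 0$ and a homotopy $\phi : {\cal H}^u(X;\lambda) \times [0,1] \to {\cal H}^u(X;\mu)$ with $\phi_0(h) = h$, $\phi_1(h) = \id$ on $L_{3s}$, $\phi_t(h) = h$ on $X - L_{2s}$, and $\phi_t(h) = h$ whenever $h = \id$ on $L_s$. Since $s > r > r_0 \geq 1$, one has $L_s \subset L_{2s} \subset L_{r_0}$; thus $\phi$ is supported in $L_{r_0}$ and $\phi_1(h)$ agrees with $h$ on $X - L_{r_0}$, but it pushes $h$ to the identity only on the very deep end $L_{3s}$, not on the prescribed $L_r$. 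The point of the second step is to slide that identity region back onto $L_r$ by conjugating with a uniformly bounded radial homeomorphism that is the identity off $L_{r_0}$.

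For the second step I would fix an increasing homeomorphism $\sigma$ of $[r_0,\infty)$ with $\sigma = \id$ near $r_0$, $\sigma(3s) = r$ and $\sigma(t) = t - (3s - r)$ for $t \geq 3s$; put $\sigma_u = (1-u)\,\id + u\,\sigma$ for $u \in [0,1]$, still an increasing self-homeomorphism of $[r_0,\infty)$ equal to $\id$ near $r_0$, and let $\rho_u(y) = \sigma_u(\|y\|)\,y/\|y\|$ on $\IR^n_{r_0}$. Define $g_u \in {\cal H}^u(X)$ by $g_u = \theta\,\rho_u\,\theta^{-1}$ on $L_{r_0}$ and $g_u = \id$ on $X - L_{r_0}$. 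Then $g_0 = \id_X$, each $g_u$ fixes $X - L_{r_0}$ pointwise and carries $L_{r_0}$ onto itself, $g_1(L_{3s}) = \theta(\IR^n_r) = L_r$, and $u \mapsto g_u$ is continuous in the uniform topology. Moreover $|\sigma_u(t) - t| \leq 3s - r_0$ for all $t, u$, and by the same estimate for $\sigma_u^{-1}$, using that $\theta$ is $\kappa$-bi-Lipschitz, one obtains $d(g_u, \id_X) \leq D$ and $d(g_u^{-1}, \id_X) \leq D$ for $D := \kappa(3s - r_0)$ and all $u$.

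Then I would set $\lambda' := \mu + 2D$ and define $\chi : {\cal H}^u(X;\lambda) \times [0,1] \to {\cal H}^u(X;\lambda')$ by $\chi_t(h) = \phi_{2t}(h)$ for $t \in [0,1/2]$ and $\chi_t(h) = g_{2t-1}\,\phi_1(h)\,g_{2t-1}^{-1}$ for $t \in [1/2,1]$. The two halves match at $t = 1/2$ because $g_0 = \id_X$, and continuity of $\chi$ follows from continuity of $\phi$, of $u \mapsto g_u^{\pm 1}$, and of composition in the topological group ${\cal H}^u(X)$. The triangle inequality together with $d(g_u^{\pm 1}, \id_X) \leq D$ gives $d(\chi_t(h), \id_X) \leq \mu + 2D = \lambda'$, so $\chi$ lands in ${\cal H}^u(X;\lambda')$; condition (i) is immediate, and (ii) holds because $\phi_1(h) = \id$ on $L_{3s}$ forces $g_1\phi_1(h)g_1^{-1} = \id$ on $g_1(L_{3s}) = L_r$.

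It remains to verify (iii) and (iv), which is where the bookkeeping must be done carefully and which I expect to be the only delicate point. Write $A := h^{-1}(X - L_{r_0}) - L_{r_0}$, so $A \subset X - L_{r_0} \subset X - L_{2s}$. On $[0,1/2]$ one has $\chi_t(h) = \phi_{2t}(h) = h$ on $A$ by the support clause of Lemma~\ref{lemma_deform_homeo_1}; on $[1/2,1]$, each $x \in A$ satisfies $g_u^{-1}(x) = x$ (since $g_u = \id$ off $L_{r_0}$), $\phi_1(h)(x) = h(x)$ (since $x \notin L_{2s}$), and $h(x) \notin L_{r_0}$, hence $\chi_t(h)(x) = g_u(h(x)) = h(x)$; this gives (iii). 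For (iv), if $h = \id$ on $L_{r_0}$ then $h = \id$ on $L_s \subset L_{r_0}$, so $\phi_t(h) = h$ for all $t$ by Lemma~\ref{lemma_deform_homeo_1}\,(iv); moreover $h$ then preserves $L_{r_0}$ and $X - L_{r_0}$ setwise and is the identity on $L_{r_0}$, so a separate check on each of these two pieces yields $g_u h g_u^{-1} = h$ for all $u$, whence $\chi_t(h) = h$ for all $t$. The substantive content of the argument is thus the idea of overshooting to $L_{3s}$ and dragging back with a bounded $g$; the remaining estimates — uniformity of $\lambda'$, and the support verifications resting on $L_s \subset L_{2s} \subset L_{r_0}$ and on $\phi_1(h) = h$ off $L_{2s}$ — are routine.
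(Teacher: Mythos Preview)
Your proof is correct and follows essentially the same route as the paper: apply Lemma~\ref{lemma_deform_homeo_1} with $s_0 = r$, then conjugate by a radial isotopy of $X$ supported in $L_{r_0}$ that carries $L_{3s}$ onto $L_r$. The only differences are cosmetic: the paper runs the two steps simultaneously as $\chi_t(h) = \xi_t^{-1}\phi_t(h)\,\xi_t$ rather than in your two-phase concatenation, and it takes its isotopy $\xi_t$ to be the identity also on $L_{4s}$ (hence compactly supported), which makes $\xi_t \in {\cal H}^u(X)$ and $\nu := \max_t d(\xi_t,\id_X) < \infty$ a bit more immediate than for your $g_u$, whose radial translation persists to infinity.
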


\begin{proof} Let $s, \mu > 0$ and $\phi$ be as in Lemma~\ref{lemma_deform_homeo_1} with respect to $\lambda$ and $s_0 = r$. 
Using the product structure of $L$, we can find an isotopy  
$\xi : X \times [0,1] \to X$ such that 
\begin{itemize}
\item[] (a) $\xi_0 = \id_X$, \ \ (b) $\xi_1(L_r) = L_{3s}$, \ \ (c) $\xi_t = \id$ on $(X - L_{r_0}) \cup L_{4s}$ \ $(t \in [0,1])$. 
\end{itemize}
By (c) the map $[0,1] \ni t \lmt \xi_t \in {\cal H}^u(X)$ is continuous and $\nu \equiv \max \{ d(\xi_t, \id_X) \mid t \in [0,1] \} < \infty$. 
Thus, we obtain the homotopy 
$$\chi : {\cal H}^u(X; \lambda) \times [0,1] \lra {\cal H}^u(X) : \ \ \ \chi_t(h) = \xi_t^{-1} \phi_t(h) \xi_t.$$ 
Since $d(\xi_t^{-1}, \id_X) = d(\xi_t, \id_X) \leq \nu$, it follows that 
$d(\chi_t(h), \id_X) \leq \lambda' \equiv \mu + 2 \nu$ $(h \in {\cal H}^u(X; \lambda))$ and that ${\rm Im}\, \chi \subset {\cal H}^u(X; \lambda')$. 
The required conditions on $\chi$ follow from the properties of $\phi$ and $\xi$. 
\end{proof}

\begin{lemma}\label{lemma_deform_homeo_3} For any $r \in (1,2)$ there exists a homotopy $\psi : {\cal H}^u(X)_b \times [0,1] \lra {\cal H}^u(X)_b$
such that for each $h \in {\cal H}^u(X)_b$
\begin{itemize}
\item[(i)\,] $\psi_0(h) = h$, \ \ {\rm (ii)} $\psi_1(h) = \id$ on $L_2$, \ \ {\rm (iii)} $\psi_t(h) = h$ on $h^{-1}(X - L_r) - L_r$ $(t \in [0,1])$, 
\item[(iv)] if $h = \id$ on $L_r$, then $\psi_t(h) = h$ $(t \in [0,1])$, 
\item[(v)\,] for any $\lambda > 0$ there exists $\mu > 0$ such that $\psi_t({\cal H}^u(X; \lambda)) \subset {\cal H}^u(X; \mu)$ $(t \in [0,1])$. 
\end{itemize} 
\end{lemma}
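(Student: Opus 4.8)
The plan is to bootstrap from Lemma~\ref{lemma_deform_homeo_2} by decomposing the group ${\cal H}^u(X)_b$ into the countably many ``layers'' ${\cal H}^u(X;\lambda)$, performing the deformation of Lemma~\ref{lemma_deform_homeo_2} on each layer, and gluing these deformations together using a partition-of-unity–type rescaling of the time parameter. First I would fix $r\in(1,2)$ and choose an auxiliary value, say $r'\in(r,2)$; applying Lemma~\ref{lemma_deform_homeo_2} with $r_0=r'$ and target $L_2$ (i.e. with ``$r$'' in that lemma equal to $2$) produces, for each $\lambda>0$, a homotopy $\chi^\lambda:{\cal H}^u(X;\lambda)\times[0,1]\to{\cal H}^u(X;\lambda')$ contracting the end past $L_2$, fixing $h$ off $h^{-1}(X-L_{r'})-L_{r'}$, and fixing any $h$ that is already $\id$ on $L_{r'}$. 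The point of property (iv) of Lemma~\ref{lemma_deform_homeo_2} is that these homotopies are \emph{compatible}: a homeomorphism lying in several layers is moved in the same way, and a homeomorphism already fixing $L_{r'}$ is not moved at all.

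Next I would build the global homotopy on ${\cal H}^u(X)_b=\bigcup_{k\ge1}{\cal H}^u(X;k)$. Choose a map $\alpha:{\cal H}^u(X)_b\to(0,\infty)$, continuous, with $\alpha(h)\ge d(h,\id_X)$ for all $h$ (for instance $\alpha(h)=d(h,\id_X)+1$); this selects a layer ${\cal H}^u(X;\alpha(h))$ containing $h$, varying continuously with $h$. Apply Lemma~\ref{lemma_deform_homeo_2} uniformly in the parameter: since the only parameter in that lemma is $\lambda$, and the homotopy it produces can be taken to depend only on an upper bound for $\lambda$, I would instead first \emph{fix one} application with a sliding bound — more cleanly, define $\psi_t(h)=\chi^{\alpha(h)}_t(h)$, where $\chi^{\lambda}$ is the homotopy from Lemma~\ref{lemma_deform_homeo_2} with $r_0=r'$ and end target $2$. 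Continuity of $\psi$ then follows from continuity of $\alpha$, the compatibility coming from property (iv) of Lemma~\ref{lemma_deform_homeo_2} (so the value $\chi^{\lambda}_t(h)$ is independent of the particular admissible $\lambda\ge d(h,\id_X)$), and Lemma~\ref{lemma_conti} applied with $P={\cal H}^u(X)_b\times[0,1]$ in the same manner as in the proof of Lemma~\ref{lemma_collar}. Properties (i)–(iv) of the present lemma are read off directly from (i)–(iv) of Lemma~\ref{lemma_deform_homeo_2}: (ii) from $\chi^\lambda_1(h)=\id$ on $L_2$; (iii) from $\chi^\lambda_t(h)=h$ on $h^{-1}(X-L_{r'})-L_{r'}\supset h^{-1}(X-L_{r})-L_{r}$ is the wrong inclusion direction, so here I would instead invoke Lemma~\ref{lemma_deform_homeo_2} with $r_0=r$ directly, giving (iii) verbatim; and (iv) from the last clause of Lemma~\ref{lemma_deform_homeo_2}. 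Finally, (v) holds because Lemma~\ref{lemma_deform_homeo_2} sends ${\cal H}^u(X;\lambda)$ into ${\cal H}^u(X;\lambda')$ with $\lambda'$ depending only on $\lambda$, so we may take $\mu=\lambda'$.

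The main obstacle is the \emph{continuity} of the glued homotopy $\psi$ at points $h$ where $d(h,\id_X)$ is large: a priori the ``bi-Lipschitz constant'' $\lambda'$ and the internal data of the homotopy $\chi^\lambda$ from Lemma~\ref{lemma_deform_homeo_2} could blow up as $\lambda\to\infty$, so that nearby homeomorphisms in slightly different layers might be deformed very differently. The resolution, as above, is that the compatibility clause (iv) of Lemma~\ref{lemma_deform_homeo_2} forces $\chi^{\lambda}_t(h)$ to be genuinely independent of the choice of admissible bound $\lambda$ — it depends only on $h$ and $t$ — so no blow-up is seen along a convergent sequence; and then Lemma~\ref{lemma_conti}, together with the fact that the deformation is supported in the fixed set $L_{2s}$ for the relevant parameter (hence $\psi_t(h)^{-1}h$ and $\psi_t(h)$ are controlled by the data of $\chi$ on the relatively compact piece $L_s$), yields continuity exactly as in Lemma~\ref{lemma_collar}. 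Property (v) is then automatic, and since every $\psi_t(h)$ differs from $\id_X$ by a finite sup-distance, $\psi$ indeed takes values in ${\cal H}^u(X)_b$.
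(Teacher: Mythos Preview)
Your proposal has a genuine gap. The crucial step is the claim that property (iv) of Lemma~\ref{lemma_deform_homeo_2} forces $\chi^{\lambda}_t(h)$ to be \emph{independent of the admissible bound $\lambda$}. This is not what (iv) says: it only guarantees that $\chi^\lambda$ fixes those $h$ that are already the identity on $L_{r_0}$. For a general $h$ lying in both ${\cal H}^u(X;\lambda)$ and ${\cal H}^u(X;\lambda')$, there is no reason for $\chi^\lambda_t(h)$ and $\chi^{\lambda'}_t(h)$ to coincide. Indeed, tracing back through Lemmas~\ref{lemma_deform_homeo_1} and~\ref{lemma_deformation}, the construction of $\chi^\lambda$ involves a scale $s>s_0$ and an isotopy $\xi$ that depend essentially on $\lambda$ (via the bound $c=\lambda\kappa$ and the similarity factor $s=c/\delta$). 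So $\psi_t(h)=\chi^{\alpha(h)}_t(h)$ is not well-defined as a single map of $h$, and the continuity argument via Lemma~\ref{lemma_conti} does not get off the ground.

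The paper's proof circumvents this by \emph{never} asking different $\chi^\lambda$ to agree on overlaps. Instead it fixes a sequence $r=r_1<r_2<\cdots<2$ and bounds $\lambda_1<\lambda_2<\cdots$, applies Lemma~\ref{lemma_deform_homeo_2} once for each $i$ to get $\chi^i:{\cal H}^u(X;\lambda_i+1)\times[0,1]\to{\cal H}^u(X;\lambda_{i+1})$, damps each $\chi^i$ by a cutoff $\alpha_i$ so that it acts trivially on ${\cal H}^u(X;\geq\lambda_i+1)$, and then \emph{composes} the resulting homotopies $\eta^i$ sequentially over $t\in[0,\infty]$. The point of (iv) in this scheme is quite different from what you use it for: once $h$ has been pushed into ${\cal H}^u_{L_{r_{i+1}}}(X)_b$ by $(\eta^i)_1\cdots(\eta^1)_1$, all later $\eta^j$ ($j\ge i+1$) leave it alone, so the infinite composition stabilizes on each norm-bounded layer. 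This is where continuity and property~(v) come from, not from any compatibility of the $\chi^\lambda$ among themselves.
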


\begin{proof} For $\lambda \geq 0$ let 
${\cal H}^u(X; \geq \hspace{-0.5mm}\lambda) = \{ h \in {\cal H}^u(X)_b \mid d(h, \id_X) \geq \lambda \}$.  
Take any sequence $r = r_1 < r_2 < \cdots < 2$. 
By repeated applications of Lemma~\ref{lemma_deform_homeo_2} we can find $\lambda_i > 0$ $(i \in \IN)$ and homotopies  
$$\chi^i : {\cal H}^u(X; \lambda_i+1) \times [0,1] \lra {\cal H}^u(X; \lambda_{i+1}) \ \ (i \in \IN)$$ 
such that for each $i \in \IN$ 
\begin{itemize}
\item[(1)] $\lambda_{i+1} > \lambda_i + 1$, 
\item[(2)] for each $h \in {\cal H}^u(X; \lambda_i+1)$ \\
\begin{tabular}[t]{c@{\ }l} 
(i) & $(\chi^i)_0(h) = h$, \hspace{3mm} (ii) $(\chi^i)_1(h) = \id$ on $L_{r_{i+1}}$, \\[2mm]
(iii) & $(\chi^i)_t(h) = h$ on $h^{-1}(X - L_{r_i}) - L_{r_i}$ $(t \in [0,1])$, \\[2mm]  
(iv) & if $h = \id$ on $L_{r_i}$, then $(\chi^i)_t(h) = h$ $(t \in [0,1])$. 
\end{tabular}
\end{itemize} 
\vskip 1mm 

For each $i \in \IN$ take a map 
\begin{itemize}
\item[(3)] $\alpha_i : {\cal H}^u(X; \lambda_i+1) \to [0,1]$ such that \ $\alpha_i(h) = 1$ if $d(h, \id_X) \leq \lambda_i$
\ and \ $\alpha_i(h) = 0$ if $d(h, \id_X) = \lambda_i+1$. 
\end{itemize} 
We modify $\chi^i$ to obtain the homotopy 
$$\eta^i : {\cal H}^u(X)_b \times [0,1] \lra {\cal H}^u(X)_b, \ \ 
(\eta^i)_t(h) = \left\{ \begin{array}[c]{@{\ }ll}
(\chi^i)_{\alpha_i(h) t}(h) & (h \in {\cal H}^u(X; \lambda_i+1)), \\[2mm] 
\ h & (h \in {\cal H}^u(X; \geq \hspace{-0.5mm}\lambda_i+1)). 
\end{array} \right.$$
Then, $\eta^i$ has the following properties: \ 
\begin{itemize}
\item[(4)] for each $h \in {\cal H}^u(X)_b$ \ \ 
(i) $(\eta^i)_0(h) = h$, \ \  (ii) $(\eta^i)_t(h) = h$ on $h^{-1}(X - L_{r_i}) - L_{r_i}$ $(t \in [0,1])$. 
\item[(5)] 
\begin{tabular}[t]{c@{\ }l}
(i) & $(\eta^i)_t(h) = h$ $(t \in [0,1])$ for any $h \in {\cal H}^u_{L_{r_i}}(X)_b \cup {\cal H}^u(X; \geq \hspace{-0.5mm}\lambda_i+1)$. \\[1.5mm]  
(ii) & $(\eta^i)_t({\cal H}^u(X; \lambda_i+1)) \subset {\cal H}^u(X; \lambda_{i+1})$ $(t \in [0,1])$. 
\end{tabular}
\vskip 1.5mm 
\item[(6)] $(\eta^i)_1({\cal H}^u(X; \lambda_i)) \subset {\cal H}^u_{L_{r_{i+1}}}(X)_b$. 
\end{itemize}
From (5) it follows that 
\begin{itemize}
\item[(7)] 
\begin{tabular}[t]{c@{\ }l}
(i) & $(\eta^j)_t({\cal H}^u(X;\lambda_i)) \subset {\cal H}^u(X;\lambda_i)$ \ \ $(j \leq i-1$, $t \in [0,1]$), \\[1.5mm]
(ii) & $(\eta^j)_t(h) = h$ $(h \in {\cal H}^u_{L_{r_{i+1}}}(X)_b)$ \ \ $(j \geq i+1$, $t \in [0,1]$).
\end{tabular} 
\end{itemize}
\vskip 1.5mm 
Hence we have 
\begin{itemize}
\item[(8)] 
\begin{tabular}[t]{c@{\ }l}
(i) & $(\eta^i)_1(\eta^{i-1})_1 \dots (\eta^1)_1({\cal H}^u(X;\lambda_i)) \subset (\eta^i)_1({\cal H}^u(X; \lambda_i)) \subset {\cal H}^u_{L_{r_{i+1}}}(X)_b$, \\[1.5mm]
(ii) & $(\eta^j)_t (\eta^{j-1})_1 \cdots (\eta^i)_1 \dots (\eta^1)_1(h) = (\eta^i)_1 \dots (\eta^1)_1(h)$ \ \ ($h \in {\cal H}^u(X;\lambda_i)$, $j \geq i+1$, $t \in [0,1]$). 
\end{tabular}  
\end{itemize}
\vskip 1mm 

Replacing $[0,1]$ by $[0, \infty]$, the homotopy $\psi : {\cal H}^u(X)_b \times [0,\infty] \lra {\cal H}^u(X)_b$ is defined by 
\vspace{1mm} 
$$\psi_t(h) = \left\{ \begin{array}[c]{@{\ }ll}
(\eta^j)_{t -j+1} (\eta^{j-1})_1 \cdots (\eta^1)_1(h) & (t \in [j-1, j], j \in \IN) \\[2mm] 
\ds \lim_{j \to \infty}(\eta^j)_1 \cdots (\eta^1)_1(h) & (t = \infty). 
\end{array} \right.$$
\vskip 1mm 
\noindent By (8)(ii) we have 
\begin{itemize}
\item[(9)] $\psi_t(h) = (\eta^i)_1 \dots (\eta^1)_1(h)$ \ \ ($h \in {\cal H}^u(X;\lambda_i)$, $t \in [i, \infty]$). 
\end{itemize}
This means that $\psi$ is well-defined and continuous. The required conditions on $\psi$ follow from (4) $\sim$ (8). 
For (v) note that $\psi_t({\cal H}^u(X; \lambda_i)) \subset {\cal H}^u(X; \lambda_{i+1})$ $(i \in \IN, t \in [0,1])$. 
\end{proof}

\begin{proposition}\label{prop_deform_homeo} 
For any $1 < s < r < 2$ there exists a strong deformation retraction $\phi$ of ${\cal H}^u(X)_b$ onto ${\cal H}^u_{L_r}(X)_b$ such that 
$$\mbox{$\phi_t(h) = h$ \ on \ $h^{-1}(X - L_s) - L_s$ \ \ for any \ $(h,t) \in {\cal H}^u(X)_b \times [0,1]$.}$$  
\end{proposition}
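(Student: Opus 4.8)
The plan is to obtain $\phi$ as the concatenation of two homotopies: one furnished by Lemma~\ref{lemma_deform_homeo_3}, which pushes every $h \in {\cal H}^u(X)_b$ into the subgroup ${\cal H}^u_{L_2}(X)_b$ while fixing ${\cal H}^u_{L_r}(X)_b$ and respecting the required support condition, and a second one, modelled on the proof of Lemma~\ref{lemma_collar}, which is a strong deformation retraction of ${\cal H}^u_{L_2}(X)_b$ onto ${\cal H}^u_{L_r}(X)_b$ effected by conjugation with a compactly supported radial pseudo-isotopy of the bi-Lipschitz Euclidean end.

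First I would apply Lemma~\ref{lemma_deform_homeo_3} with the given value $r \in (1,2)$, obtaining a homotopy $\psi : {\cal H}^u(X)_b \times [0,1] \to {\cal H}^u(X)_b$ with $\psi_0 = \id$, $\psi_1(h) = \id$ on $L_2$, $\psi_t(h) = h$ on $h^{-1}(X - L_r) - L_r$, and $\psi_t(h) = h$ whenever $h = \id$ on $L_r$. Since $s < r$ gives $L_r \subseteq L_s$, we have $h^{-1}(X - L_s) - L_s \subseteq h^{-1}(X - L_r) - L_r$, so the support clause of $\psi$ already yields $\psi_t(h) = h$ on $h^{-1}(X - L_s) - L_s$; moreover $\psi_1$ maps ${\cal H}^u(X)_b$ into ${\cal H}^u_{L_2}(X)_b$ and $\psi$ fixes ${\cal H}^u_{L_r}(X)_b \subseteq {\cal H}^u_{L_2}(X)_b$ pointwise.

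Next, fix $s' \in (s,r)$ and use the chart $\theta : \IR^n_1 \cong L$ together with the radial product structure of $\IR^n_1$. Imitating the proof of Lemma~\ref{lemma_collar}, I would construct a pseudo-isotopy $\xi : [0,1]^2 \to {\cal C}^u(X,X)_u$ with each $\xi_t(a)$ equal to the identity on $X - L$ and equal to $\theta(y) \mapsto \theta(\lambda_t(a)(\|y\|)\, y/\|y\|)$ on $L$, where $\lambda_t(a) : [1,\infty) \to [1,\infty)$ is the identity off $[s',3]$, with $\lambda_0(a) = \id$, with $\lambda_t(a)(2) \le 2$ always, with $\lambda_1(a)$ carrying $\{\|y\| \ge 2\}$ onto $\{\|y\| \ge r\}$, and with $\lambda_t(0)$ supported in $[r,3]$, so that (as in Lemma~\ref{lemma_collar}) $\xi_t(a)$ is a homeomorphism for $(a,t) \ne (0,1)$. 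Then $\xi_t(a)$ is supported in the compact set $N = \theta(\{\, s' \le \|y\| \le 3 \,\})$, so $\xi_t(a) \in {\cal H}^u(X)_b$ for $(a,t) \ne (0,1)$; furthermore $\xi_t(a)(L_2) \supseteq L_2$ for all $(a,t)$, $\xi_1(a)(L_2) = L_r$ for all $a$, and $\xi_t(0)$ is supported in $L_r$, hence commutes with every element of ${\cal H}^u_{L_r}(X)_b$. Choosing a map $\alpha : {\cal H}^u_{L_2}(X)_b \to [0,1]$ with $\alpha^{-1}(0) = {\cal H}^u_{L_r}(X)_b$ and setting $\phi'_t(k) = \xi_t(\alpha(k))\, k\, \xi_t(\alpha(k))^{-1}$ for $k \notin {\cal H}^u_{L_r}(X)_b$ and $\phi'_t(k) = k$ otherwise, the relation $\phi'_t(k)\, \xi_t(\alpha(k)) = \xi_t(\alpha(k))\, k$ together with Lemma~\ref{lemma_conti} (applied exactly as in the proof of Lemma~\ref{lemma_collar}) shows that $\phi'$ is a continuous strong deformation retraction of ${\cal H}^u_{L_2}(X)_b$ onto ${\cal H}^u_{L_r}(X)_b$ with $\phi'_t(k) = k$ on $k^{-1}(X - N) - N$. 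As $s' > s$ gives $N \subseteq L_s$, this last property forces $\phi'_t(k) = k$ on $k^{-1}(X - L_s) - L_s$.

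Finally I would splice the two homotopies, putting $\phi_t = \psi_{2t}$ for $t \in [0, 1/2]$ and $\phi_t = \phi'_{2t-1} \circ \psi_1$ for $t \in [1/2, 1]$. This is continuous (since $\phi'_0 = \id$), satisfies $\phi_0 = \id$ and $\phi_1(h) \in {\cal H}^u_{L_r}(X)_b$, and fixes ${\cal H}^u_{L_r}(X)_b$ pointwise because both stages do. For the support clause, put $A = h^{-1}(X - L_s) - L_s$: the first stage gives $\psi_1(h) = h$ on $A$, and $N \subseteq L_s$ gives $A \subseteq \psi_1(h)^{-1}(X - N) - N$, whence $\phi'_{2t-1}(\psi_1(h)) = \psi_1(h) = h$ on $A$. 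The main obstacle is the construction in the second stage: the radial pseudo-isotopy $\xi$ must simultaneously be supported in a compact annulus lying inside $L_s$, be compatible with the factor-function trick (so that $\xi_t(0)$ commutes with ${\cal H}^u_{L_r}(X)_b$), and carry $L_2$ onto $L_r$ at time $1$; once compact support is arranged, each $\xi_t(a)$ is automatically a uniform homeomorphism of $(X,d)$, and the remaining steps are a routine transcription of the proof of Lemma~\ref{lemma_collar}.
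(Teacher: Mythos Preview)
Your proposal is correct and follows essentially the same two–stage strategy as the paper: first apply Lemma~\ref{lemma_deform_homeo_3} to push ${\cal H}^u(X)_b$ into ${\cal H}^u_{L_2}(X)_b$, then strongly deformation retract ${\cal H}^u_{L_2}(X)_b$ onto ${\cal H}^u_{L_r}(X)_b$ by the conjugation trick of Lemma~\ref{lemma_collar}, and finally concatenate. The only difference is in the second stage: the paper removes ${\rm Int}\,L_3$, sets $Y = X - {\rm Int}\,L_3$ with compact collar $N = {\rm Int}\,L - {\rm Int}\,L_3$ parametrized so that $(N_1,N_2,N_3)$ corresponds to $(L_2,L_r,L_s)$ minus ${\rm Int}\,L_3$, identifies ${\cal H}^u_{L_2}(X)_b \approx {\cal H}^u_{N_1}(Y)_b$, and then invokes Lemma~\ref{lemma_collar} as a black box; you instead rebuild the pseudo-isotopy $\xi$ of that lemma directly in the radial coordinate of the end. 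Both routes give the same deformation; the paper's is shorter because the compact-collar machinery is already packaged in Lemma~\ref{lemma_collar}.
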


\begin{proof}  
Let $\psi : {\cal H}^u(X)_b \times [0,1] \lra {\cal H}^u(X)_b$ be the homotopy given by Lemma~\ref{lemma_deform_homeo_3}. 
Then $\psi$ is a deformation of ${\cal H}^u(X)_b$ into ${\cal H}^u_{L_2}(X)_b$ which fixes ${\cal H}^u_{L_r}(X)_b$ pointwise and 
satisfies 
\begin{itemize}
\item[(1)] $\psi_t(h) = h$ on $h^{-1}(X - L_r) - L_r$ \ \ $(h \in {\cal H}^u(X)_b, t \in [0,1])$.  
\end{itemize} 

Let $Y = X - {\rm Int}\,L_3$, $S = L_3 - {\rm Int}\,L_3$ and $N = {\rm Int}\,L - {\rm Int}\,L_3$. 
Then $N$ is an open collar neighborhood of $S$ in $Y$ and 
for any $s \in (0, r)$ it admits a parametrization 
\begin{itemize}
\item[(2)] $\vartheta : (S \times [0,4), S \times \{ 0 \}) \approx (N, S)$ \ \ such that \ \  
$N_1 = L_2 - {\rm Int}\,L_3$, \ 
$N_2 = L_r - {\rm Int}\,L_3$, \ 
$N_3 = L_s - {\rm Int}\,L_3$. 
\end{itemize} 
Here, $N_s = \theta(S \times [0,s])$ $(s \in [0,4))$. 
Under the canonical identification $({\cal H}^u_{L_2}(X)_b, {\cal H}^u_{L_r}(X)_b) \approx ({\cal H}^u_{N_1}(Y)_b, {\cal H}^u_{N_2}(Y)_b)$, 
Lemma~\ref{lemma_collar} yields a strong deformation retraction 
$\chi_t$ $(t \in [0,1])$ of ${\cal H}^u_{L_2}(X)_b$ onto ${\cal H}^u_{L_r}(X)_b$ such that 
\begin{itemize}
\item[(3)] $\chi_t(h) = h$ \ on \ $h^{-1}(X - L_s) - L_s$ \ \ for any \ $(h,t ) \in {\cal H}^u_{L_2}(X)_b \times [0,1]$.
\end{itemize} 

Finally, the homotopy $$\phi : {\cal H}^u(X)_b \times [0,1] \lra {\cal H}^u(X)_b : \hspace{3mm} 
\phi_t = 
\left\{\begin{array}[c]{ll}
\psi_{2t} & (t \in [0,1/2]), \\[2mm] 
\chi_{2t-1} \psi_1 & (t \in [1/2,1]) 
\end{array}\right.$$ 
\vskip 2mm 
\noindent is a strong deformation retraction of ${\cal H}^u(X)_b$ onto ${\cal H}^u_{L_r}(X)_b$ satisfying the required condition. 
\end{proof}

\begin{proof}[\bf Proof of Theorem~\ref{thm_Euclid-end}] 
For each $i \in [m]_+$ we can replace the bi-Lipschitz homeomorphism $\theta_i$ for $L_i$ by another $\theta_i'$ such that $L_i' = \theta_i'(\IR^{n_i}_{4/3})$ and $L_i'' = \theta_i'(\IR^{n_i}_{3/2})$. 
Then, by Proposition~\ref{prop_deform_homeo} 
there exists a strong deformation retraction $\phi^i$ of ${\cal H}^u(X)_b$ onto ${\cal H}^u_{L_i''}(X)_b$ such that 
$$\mbox{$(\phi^i)_t(h) = h$ \ on \ $h^{-1}(X - L_i') - L_i'$ \ \ for any \ $(h,t) \in {\cal H}^u(X)_b \times [0,1]$.}$$  

Define the homotopy $\phi$ by $\phi_t = (\phi^m)_t \cdots (\phi^1)_t$ $(t \in [0,1])$. 
\end{proof}

%%%%%%%%%%%%%%%%%

%%%%%%%%%%%%%

\begin{thebibliography}{99}

\bibitem{Ce}
 A.V.~{\v C}ernavski\u\i,
 {\em Local contractibility of the group of homeomorphisms of a manifold},
 (Russian) Mat.\ Sb.\ (N.S.) {\bf 79 (121)} (1969) 307--356.

\bibitem{Du}
J.~Dugundji, Topology, Allyn and Bacon, Inc., Boston, 1966. 

\bibitem{EK}
 R.D.~Edwards and R.C.~Kirby,
 {\em Deformations of spaces of imbeddings},
 Ann.\ of Math.\ (2) {\bf 93} (1971) 63--88.
 
%\bibitem{HW} 
%W.~Hurewicz and H.~Wallman, Dimension Theory, Princeton Mathematical Series, v. 4. Princeton University Press, Princeton, N. J., 1941. 

\bibitem{MSYY}
 K. Mine, K. Sakai, T. Yagasaki and A. Yamashita, 
{\em Topological type of the group of uniform homeomorphisms of the real lines},
{Topology Appl.,} 158 (2011) 572 - 581. 

\bibitem{Ru}
T.B.~Rushing, 
{Topological embeddings}, Academic Press, New York, 1973. 

\bibitem{Sp}
E.H.~Spanier, 
{Algebraic Topology}, McGraw-Hill, New York, 1966. 

\end{thebibliography}
\end{document}